\theoremstyle{plain} \numberwithin{equation}{section}
\newtheorem{theorem}{Theorem}[section]
\newtheorem{corollary}[theorem]{Corollary}
\newtheorem{lemma}[theorem]{Lemma}
\newtheorem{proposition}[theorem]{Proposition}
\theoremstyle{definition}
\newtheorem{definition}[theorem]{Definition}
\newtheorem{remark}[theorem]{Remark}
\newcommand{\poverline}{\mathfrak{\Bar{p}}}
\newcommand{\rotq}{rot_{\mathbb{Q}}}
\newcommand{\tbq}{tb_{\mathbb{Q}}}
\newcommand{\pa}{\partial}
\def\HFa {\operatorname{\widehat{HF}}}
\newcommand{\s}{$\mathfrak{s}$}
\def \f-{f^{-1}}
\def \fp-{f^{-1}_\partial}
\title{Naturality of Legendrian LOSS invariant under positive contact surgery}
\author{Shunyu Wan}
\date{}
\begin{document}

\maketitle

\begin{abstract}

Ozsv\'ath and Stipsicz \citep{OSct} showed that the LOSS invariant is natural under +1 contact surgery. We extend their result and prove the naturality of the LOSS invariant of a Legendrian $L$ under any positive integer contact surgery along a Legendrian $S$ in the complement of $L$. In addition, when $S$ is rationally null-homologous, we also entirely characterize the $Spin^c$ structure in the surgery cobordism that makes the naturality of  contact invariant or LOSS invariant work (without conjugation ambiguity). In particular this implies that contact invariant of the $+n$ contact surgery along a rationally null-homologous Legendrian $S$ depends only on the classical invariants of $S$. The additional generality provided by those results allows us to prove that if two Legendrian knots have different LOSS invariants then after adding the same positive twists to each in a suitable sense, the two new Legendrian knots will also have different LOSS invariants. This leads to new infinite families of examples of Legendrian (or transverse) non-simple knots that are distinguished by their LOSS invariants.

\end{abstract}

\section{Introduction} 

Given a null-homologous oriented Legendrian knot $L$ in a contact 3 manifold $(Y,\xi)$ one can associate the ``LOSS invariant'' $\mathfrak{L}(L)$, and ``LOSS-hat invariant'' $\mathfrak{\widehat{L}}(L)$ to $L$ \citep{LOSS}, which live in the knot Floer groups $HFK^-(-Y,L)$ and $\widehat{HFK}(-Y,L)$  respectively, and are due to Lisca, Ozsv\'ath, Stipsicz and Szab\'o (\citep{OShk}, \citep{Rfk}). In \citep{OSct} Ozsva\'th and Stipsicz study the naturality of LOSS invariant under contact +1 surgery; here we extend that result to contact $+n$ surgery. 

Recall that doing contact surgery on a Legendrian knot in a contact 3-manifold $(Y,\xi)$ gives new contact 3-manifold, but if we are doing contact $+n$ surgery for $n>1$ the resulting contact structure is not unique and we need to make a choice of stabilization \citep{DGS}. Throughout the paper we are choosing the contact structure corresponding to all stabilizations being negative, and denote by $\xi_n^-$ the resulting contact structure (see section 3 for more detail). For the naturality result we prove, we only consider $\xi_n^-$.

\begin{theorem} {\label{th 1.1}}

Let $L,S \in (Y, \xi)$ be two disjoint oriented Legendrian knots in the contact 3-manifold $(Y, \xi)$ with $L$ null-homologous. Let $(Y_n(S),\xi_n^-(S))$ denote the contact 3-manifold we get by performing contact $(+n)$-surgery along $S$, and denote by $L_S$ the oriented Legendrian knot corresponding to L in $(Y_n(S),\xi_n^-(S))$. Moreover suppose that $L_S$ is null-homologous in $Y_n(S)$. Let $W$ be the 2-handle cobordism from $Y$ to $Y_n(S)$ induced by the surgery, and let
\begin{equation}
    F_{S,\mathfrak{s}}: HFK^- (-Y,L) \rightarrow HFK^-(-Y_n(S),L_S)
\end{equation}
be the homomorphism in knot Floer homology induced by $-W$, the cobordism with reversed orientation, for  $\mathfrak{s}$ a $Spin^c$ structure on $-W$. If Y is a rational homology sphere then there is a choice of \s \ for which 

\begin{equation}
    F_{S,\mathfrak{s}}(\mathfrak{L}(Y,\xi,L))=\mathfrak{L}(Y_n(S),\xi_n^-(S),L_S)
\end{equation}
holds. 
A similar identity holds for the Legendrian invariant $\mathfrak{\widehat{L}}$ in $\widehat{HFK}$.  
\end{theorem}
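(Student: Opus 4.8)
The plan is to extend Ozsv\'ath--Stipsicz's contact $(+1)$-surgery result to contact $(+n)$-surgery by decomposing the surgery into elementary pieces, tracking the LOSS generator through each, and reassembling the cobordism map via the composition law in knot Floer homology. By the Ding--Geiges--Stipsicz presentation of contact surgery recalled in Section 3, $(Y_n(S),\xi_n^-(S))$ is obtained from $(Y,\xi)$ by one contact $(+1)$-surgery along $S$ together with $(n-1)$ contact $(-1)$-surgeries along a chain of negatively stabilized Legendrian push-offs $S_1,\dots,S_{n-1}$ of $S$, all supported in an arbitrarily small neighborhood of $S$ and therefore in the complement of $L$; the all-negative stabilization convention is precisely what singles out $\xi_n^-$. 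Attaching these $n$ two-handles one at a time, with the $(+1)$-handle on $S$ first, the reversed surgery cobordism factors as a composite $-W=(-W_{n-1})\circ\dots\circ(-W_1)\circ(-W_0)$ of reversed elementary two-handle cobordisms, each of which fixes $L$ and is a cobordism of pairs.

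For the $(+1)$-step I would use the argument of \citep{OSct}, extended as needed to rationally null-homologous bystanders: there the LOSS invariant is the distinguished contact intersection point of the Heegaard diagram coming from an open book carrying $L\cup S$ on a page, and the triangle map for $-W_0$, in a suitable $Spin^c$ structure, carries this point to the corresponding generator of the $(+1)$-surgered manifold, the presence of $L$ playing no essential role. For each $(-1)$-step I would run the analogous open-book argument: contact $(-1)$-surgery along $S_i$ post-composes the monodromy with a positive Dehn twist along $S_i$, yielding an open book for the surgered contact manifold that still carries $L$ on a page, and the LOSS generator is literally the same contact intersection point, lying in the region of the diagram untouched by the twist. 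The reversed two-handle cobordism $-W_i$ is then modeled by a Heegaard triple whose $\beta$- and $\gamma$-curves differ only near $S_i$, and the canonical nearest-point holomorphic triangle through the top generator $\Theta$ connecting the two contact generators has Maslov index $0$ and a unique holomorphic representative; hence the triangle map in the $Spin^c$ structure that this small triangle carries sends the LOSS generator to the LOSS generator. (This is the knot-Floer analogue of the functoriality of the Ozsv\'ath--Szab\'o contact class under Stein cobordisms, and admits an essentially identical proof.)

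To conclude I would apply the composition law, $F_{-W,\mathfrak{s}}=F_{-W_{n-1}}\circ\dots\circ F_{-W_0}$ with each factor carrying the restriction of $\mathfrak{s}$. Each elementary small triangle carries a $Spin^c$ structure whose restriction to either boundary three-manifold is the canonical $Spin^c$ structure of the contact structure there, so these distinguished $Spin^c$ structures agree on every separating three-manifold and therefore glue to at least one $Spin^c$ structure $\mathfrak{s}$ on $-W$; for this $\mathfrak{s}$ every factor of the composition law is a LOSS-to-LOSS small-triangle map, so $F_{S,\mathfrak{s}}(\mathfrak{L}(Y,\xi,L))=\mathfrak{L}(Y_n(S),\xi_n^-(S),L_S)$, and the identical argument in $\widehat{HFK}$ (or post-composition with $HFK^-\to\widehat{HFK}$) gives the statement for $\mathfrak{\widehat{L}}$. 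The hypothesis that $Y$ is a rational homology sphere enters to make $S$ and the $S_i$ rationally null-homologous, to keep the intermediate manifolds rational homology spheres so that the intermediate (rational) LOSS invariants and the cobordism maps between them are all defined, and to keep the relevant $Spin^c$ sets finite; the sharper identification of $\mathfrak{s}$ with no conjugation ambiguity when $S$ is rationally null-homologous is deferred to the later sections.

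The main obstacle is not any single triangle count --- each is a routine nearest-point computation --- but the uniform bookkeeping needed to run the argument through rationally, rather than integrally, null-homologous intermediate Legendrians: one must check that $L$ stays rationally null-homologous after each elementary surgery so that the relative invariants exist, verify that the contact-generator description of the rational LOSS invariant is compatible with each open-book modification, and confirm that the distinguished $Spin^c$ structures really splice together across the composite cobordism. Managing this, and then upgrading the resulting $Spin^c$ structure to the canonical one without conjugation ambiguity, is where the argument takes the real work.
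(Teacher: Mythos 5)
Your proposal takes a genuinely different route from the paper.  The paper does not decompose the contact $(+n)$-surgery into $n$ elementary $\pm 1$-surgeries at all; instead it invokes Theorem~\ref{3.4} (from \citep{LSct}) to identify $-W$ with a \emph{single} capping-off cobordism, builds one doubly pointed Heegaard triple adapted to that capping-off, and shows in Proposition~\ref{proposition 4.2} (via Lemmas~\ref{Lemma 4.3} and~\ref{Lemma 4.4}) that a unique small holomorphic triangle carries the LOSS generator to the LOSS generator.  That single-cobordism analysis is what makes the Spin$^c$ bookkeeping and the null-homology hypothesis entirely transparent.

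The central gap in your decomposition approach is that the composite of the $n$ elementary two-handle cobordisms coming from the DGS link $(L_0^1)\cup L_1\cup\dots\cup L_{n-1}$ is \emph{not} the cobordism $W$ in the statement.  $W$ is a single two-handle attachment along $S$ with framing $tb(S)+n$, so $b_2(W)=1$, whereas the composite DGS cobordism has $b_2=n$; Kirby-calculus shows the latter is $W$ blown up $(n-1)$ times.  Your argument, even if every elementary step went through, would therefore prove naturality for $W\#(n-1)\overline{\mathbb{CP}}{}^2$, and you would still need a blow-down argument (a knot-Floer analogue of the blow-up formula, together with a check that your chosen Spin$^c$ structure restricts to the exceptional one on each $\overline{\mathbb{CP}}{}^2$ summand) to deduce the claim for $W$ itself.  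That step is not mentioned.

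Two further points need more than a wave.  First, naturality of the LOSS invariant under contact $(-1)$-surgery is not in \citep{OSct} and is not proved anywhere in this paper; you would have to establish it as a lemma, and while the nearest-point triangle heuristic is plausible, ``admits an essentially identical proof'' is doing real work.  Second, the composition law for cobordism maps is a \emph{sum} over Spin$^c$ structures with prescribed restrictions; to extract a single $\mathfrak{s}$ with $F_{S,\mathfrak{s}}(\mathfrak{L})=\mathfrak{L}$ you need either uniqueness of the gluing (which would require the intermediate three-manifolds to be rational homology spheres---not guaranteed just because $Y$ is) or a separate argument that only one term contributes.  Relatedly, you would also have to verify that $L$ remains (rationally) null-homologous in every intermediate manifold so that each intermediate LOSS class is even defined; you flag this as bookkeeping, but it is a genuine hypothesis that can fail and is one more thing the paper's one-step capping-off argument never has to confront.
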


Since the LOSS invariant stays unchanged under negative stabilization \citep{LOSS}, it gives rise to an invariant of transverse knots. If we have a transverse knot $T$ in $(Y,\xi)$ the transverse invariants $\mathfrak{T}$ and $\mathfrak{\widehat{T}}$ are defined to be the LOSS invariants of a Legendrian approximation of $T$ \citep{Eltk}. Thus we obtain a parallel naturality statement for transverse invariants $\mathfrak{T}$ and $\mathfrak{\widehat{T}}$.

\begin{corollary}{\label{cor 1.2}}
   Let $T$ be a null-homologous positively transverse knot and $S$  an oriented Legendrian knot in $(Y, \xi)$ which is disjoint from $T$. Let $(Y_n(S),\xi_n^-(S))$ denote the contact 3-manifold we get by performing contact $(+n)$-surgery along $S$, and we denote $T_S$ the transverse knot corresponding to $T$ in $(Y_n(S),\xi_n^-(S))$. Moreover suppose that $T_S$ is null-homologous in $Y_n(S)$. Let $W$ be the 2-handle cobordism from $Y$ to $Y_n(S)$ induced by the surgery, and let
\begin{equation}
    F_{S,\mathfrak{s}}: HFK^- (-Y,T) \rightarrow HFK^-(-Y_n(S),T_S)
\end{equation}
be the homomorphism in knot Floer homology induced by $-W$, the cobordism with reversed orientation, for  $\mathfrak{s}$ a $Spin^c$ structure on $-W$. If Y is a rational homology sphere then there is a choice of \s \ for which 

\begin{equation}
    F_{S,\mathfrak{s}}(\mathfrak{L}(Y,\xi,T))=\mathfrak{L}(Y_n(S),\xi_n^-(S),T_S)
\end{equation}
holds. 
A similar identity holds for the transverse invariant $\mathfrak{\widehat{T}}$ in $\widehat{HFK}$.  
\end{corollary}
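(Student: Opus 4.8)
The plan is to deduce Corollary~\ref{cor 1.2} directly from Theorem~\ref{th 1.1} by passing to a Legendrian approximation. Recall that $\mathfrak{T}(Y,\xi,T)$ is by definition $\mathfrak{L}(Y,\xi,L)$ for any Legendrian approximation $L$ of $T$, and this is well defined because $\mathfrak{L}$ is invariant under negative stabilization. So the task reduces to choosing a Legendrian approximation $L$ of $T$ to which Theorem~\ref{th 1.1} applies and whose surgered image is a Legendrian approximation of $T_S$.

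First I would choose a Legendrian approximation $L$ of $T$ supported in a tubular neighborhood of $T$ disjoint from $S$; this is possible since $T$ is disjoint from $S$ and a Legendrian approximation can be produced by an arbitrarily $C^0$-small perturbation (for instance by adding downward zigzags in a front projection) inside any prescribed neighborhood of $T$. Since $L$ is smoothly isotopic to $T$, the hypothesis that $T$ is null-homologous forces $L$ to be null-homologous in $Y$, which is the first hypothesis of Theorem~\ref{th 1.1}. Because the contact $(+n)$-surgery along $S$ is performed inside a standard contact neighborhood of $S$ that misses $T$ and $L$, the contact structure near $T$ is unchanged, so the Legendrian knot $L_S\subset (Y_n(S),\xi_n^-(S))$ corresponding to $L$ is again a Legendrian approximation of $T_S$. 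In particular $L_S$ is smoothly isotopic to $T_S$, so the hypothesis $[T_S]=0$ in $H_1(Y_n(S))$ yields $[L_S]=0$, which is the remaining hypothesis of Theorem~\ref{th 1.1}.

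With these identifications in place I would apply Theorem~\ref{th 1.1} to the pair $(L,S)$: for $Y$ a rational homology sphere there is a $Spin^c$ structure $\mathfrak{s}$ on $-W$ with
\begin{equation}
F_{S,\mathfrak{s}}\big(\mathfrak{L}(Y,\xi,L)\big)=\mathfrak{L}(Y_n(S),\xi_n^-(S),L_S).
\end{equation}
Since $W$ depends only on $S$ and the surgery coefficient, it is the cobordism appearing in the statement of the corollary and $F_{S,\mathfrak{s}}$ is the same map; rewriting $\mathfrak{L}(Y,\xi,L)=\mathfrak{T}(Y,\xi,T)$ and $\mathfrak{L}(Y_n(S),\xi_n^-(S),L_S)=\mathfrak{T}(Y_n(S),\xi_n^-(S),T_S)$ by the definition of the transverse invariant gives the claim. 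The statement for $\mathfrak{\widehat{T}}$ follows identically from the hat version of Theorem~\ref{th 1.1}, using that $\mathfrak{\widehat{T}}$ is $\mathfrak{\widehat{L}}$ of a Legendrian approximation.

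I do not expect a serious obstacle, since Theorem~\ref{th 1.1} does the real work; the only points needing care are (i) that the Legendrian approximation can be taken disjoint from $S$, so that the approximation and the surgery are "independent," and (ii) that surgering along $S$ away from $T$ genuinely carries a Legendrian approximation of $T$ to a Legendrian approximation of $T_S$ (equivalently, that passing to the corresponding knot in $Y_n(S)$ commutes for $L$ and $T$). Both are locality statements about contact surgery and should be routine to verify.
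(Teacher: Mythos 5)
Your argument is correct and coincides with the paper's own (implicit) reasoning: the corollary is stated immediately after the remark that $\mathfrak{T}$ and $\mathfrak{\widehat{T}}$ are defined to be the LOSS invariants of a Legendrian approximation, and is thereby deduced from Theorem~\ref{th 1.1} exactly as you describe. Your explicit care about taking the approximation disjoint from $S$ and checking that approximation commutes with surgery spells out the routine locality points the paper leaves unsaid.
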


The way of proving Theorem \ref{th 1.1} combines the ideas of \citep{OSct}, \citep{Bc}, \citep{MTn}, and \citep{LSct}, and can be briefly described as follows. We first interpret the contact $+n$ surgery cobordism as a capping off cobordism by viewing it upside down. Then we construct a doubly pointed Heegaard triple  describing the capping off cobordism and the induced map $F_{B,\mathfrak{s}}$ in knot Floer homology where $B$ is the binding component being capped off, and finally we show this map carries the LOSS invariant of $L$ to the LOSS invariant of $L_S$. 

In paricular, Theorem \ref{th 1.1} follows from a naturality property for the LOSS invariant under capping off cobordisms. Recall that an (abtract) open book consists of a pair $(P, \phi)$ where $P$ is a compact oriented 2-manifold with boundary and $\phi$ is a diffeomorphism of $P$ fixing $\partial P$. If a boundary component $B$ of $P$ is chosen, then the capped-off open book $(P', \phi')$ is obtained by attaching a disk to $P$ along $B$ and extending $\phi$ by the identity.

\begin{theorem}\label{capoffthm}
Let $(P_{g,r},\phi)$ be an abstract open book with genus $g$ and $r>1$ binding components. Suppose $T$ and $B$ are distinct binding components; then capping off  $B$ we get a new open book $(P_{g,r-1},\phi')$ which has a binding component $T'$ correspond to $T$. 

Denote by $(M,\xi)$, $(M',\xi')$ the contact 3 manifolds corresponding to those two open books, so that $T$, $T'$ naturally become transverse knots. The capping off cobordism gives rise to a map 

\begin{equation}
    F_{B,\mathfrak{s}}: HFK^- (-M',T') \rightarrow HFK^-(-M,T)
\end{equation}
where $\mathfrak{s}$ is a $Spin^c$ structure on the cobordism W from $-M'$ to $-M$. If $M'$ is a rational homology sphere, and both $T$, $T'$ are null-homologous, then there is a choice of \s \ for which 

\begin{equation}
    F_{B,\mathfrak{s}}(\mathfrak{T}(M',\xi',T'))=\mathfrak{T}(M,\xi,T)
\end{equation}
holds. A similar
identity holds for the transverse invariant $\mathfrak{\widehat{T}}$ in $\widehat{HFK}$. 
\end{theorem}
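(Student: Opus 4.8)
The plan is to reduce the asserted identity to an explicit holomorphic-triangle count on a doubly-pointed Heegaard triple adapted to the capping-off cobordism, combining the handle-map machinery of Ozsv\'ath--Szab\'o, Baldwin's analysis of capping off \citep{Bc}, and the open-book Heegaard diagrams of Honda--Kazez--Mati\'c as used in \citep{LOSS, OSct}. Throughout, $W$ is regarded as a single $2$-handle cobordism: capping off $B$ is topological surgery along $B\subset M$ with its page framing, so the dual (upside-down, orientation-reversed) cobordism $W$ from $-M'$ to $-M$ is again a $2$-handle cobordism, attached along a belt circle of that handle.

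\emph{Step 1 (compatible open-book diagrams).} First I would choose an arc basis $\{a_1,\dots,a_k\}$ of the page $P_{g,r}$ adapted to the ordered pair $(T,B)$: arrange that $a_1,\dots,a_{k-1}$ miss a collar of $B$ and descend to an arc basis of the capped page $P_{g,r-1}$, that $a_k$ has an endpoint on $B$, and that all arcs miss a collar of $T$. Doubling the page and using the monodromies $\phi$ and $\phi'=\phi\cup\mathrm{id}$ as in \citep{LOSS}, this yields doubly-pointed Heegaard diagrams $\mathcal H=(\Sigma,\boldsymbol\alpha,\boldsymbol\beta,w,z)$ for $(-M,T)$ and $\mathcal H'$ for $(-M',T')$, with $w,z$ straddling $T$, resp.\ $T'$. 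By \citep{LOSS} (in the transverse/binding formulation of \citep{Eltk, OSct}) the invariant $\mathfrak T(M,\xi,T)$ is represented by the contact generator $\mathbf x_\phi=(x_1,\dots,x_k)$, and similarly $\mathfrak T(M',\xi',T')$ by its analogue. The point of the adapted basis is that $\phi'=\phi$ away from the capping disk and $a_1,\dots,a_{k-1}$ never enter it, so $\mathbf x_{\phi'}$ is \emph{literally} $\mathbf x_\phi$ with its last coordinate deleted.

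\emph{Step 2 (the capping triple).} Next I would enlarge $\mathcal H'$ (after an index-one stabilization supported near $a_k$) to a Heegaard triple $(\Sigma,\boldsymbol\alpha,\boldsymbol\beta,\boldsymbol\gamma,w,z)$ arranged as in the Ozsv\'ath--Szab\'o handle construction, so that one of the pairs represents $(-M',T')$, one represents $(-M,T)$, the third represents $\#^{k-1}(S^1\times S^2)$, and $\boldsymbol\beta$ and $\boldsymbol\gamma$ differ only in the index associated to $a_k$ (the surgered curve). The cobordism map $F_{W,\mathfrak s}$ is then the triangle-counting map $\mathbf y\mapsto\sum_\psi \#\mathcal M(\psi)\cdot U^{n_w(\psi)}\,\mathbf z$, the sum over $\psi\in\pi_2(\Theta^+,\mathbf y,\mathbf z)$ with $\mathfrak s_z(\psi)=\mathfrak s$, where $\Theta^+$ is the top generator of $\widehat{HF}(\#^{k-1}(S^1\times S^2))$ and $n_z$ records the Alexander grading; setting $U=0$ (or composing with $HFK^-\to\widehat{HFK}$) yields the statement for $\mathfrak{\widehat T}$. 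By construction this triple restricts over the $k$-th handle to the standard local model that ``undoes'' the capping.

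\emph{Step 3 (image of the contact class, and the obstacle).} It remains to show $F_{W,\mathfrak s}(\mathbf x_{\phi'})=\mathbf x_\phi$ for a suitable $\mathfrak s$. There is a distinguished ``small'' triangle $\psi_0\in\pi_2(\Theta^+,\mathbf x_{\phi'},\mathbf x_\phi)$ supported in a thin region near the $k$-th handle; in the standard local model it has a unique holomorphic representative with $n_w(\psi_0)=n_z(\psi_0)=0$, hence contributes $\mathbf x_\phi$ with coefficient $1$, and it dictates the choice $\mathfrak s:=\mathfrak s_z(\psi_0)$. The crux --- and the step I expect to be the main obstacle --- is to prove that nothing else contributes to $\mathbf x_\phi$: that in a suitably wound diagram, $\psi_0$ is the \emph{unique} holomorphic triangle emanating from $\Theta^+$ and $\mathbf x_{\phi'}$ in the class $\mathfrak s$ with $n_w=0$. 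I would handle this by an area/index and positivity argument as in \citep{Bc}, or alternatively by identifying $F_{W,\mathfrak s}$ with the sutured capping map of \citep{Bc, MTn} and invoking that the Honda--Kazez--Mati\'c contact (EH) class is preserved by the corresponding contact gluing map, cf.\ \citep{LSct}. Finally the hypotheses enter: $M'$ being a rational homology sphere makes the contact $\mathrm{Spin}^c$ structure on $-M'$ rigid and pins down $\mathfrak s$ on $W$ with no conjugation ambiguity, while $T$ and $T'$ being null-homologous supplies the Alexander gradings and lets one read off the identity in $HFK^-$ (not merely $HF^-$). A secondary bookkeeping subtlety, worth isolating early, is keeping the $(w,z)$-data near $T$ consistent between $\mathcal H$ and $\mathcal H'$ so that $F_{W,\mathfrak s}$ respects the Alexander grading.
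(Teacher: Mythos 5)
Your proposal follows the same overall strategy as the paper: build an open-book Heegaard triple in the style of \citep{LOSS} whose third set of curves differs from the second only at the index tied to $B$, realize the cobordism map by a triangle count, locate the small triangle connecting the two contact generators, and argue that it is the only contributing triangle. Your Step~1 arrangement (arcs $a_1,\dots,a_{k-1}$ descending to a basis of the capped page, the last arc meeting $B$, the destabilization) is the same adapted-triple bookkeeping the paper uses, just indexed differently. So the framework is right, and you have correctly isolated the crux.

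The gap is that you do not actually establish that crux. You write that you ``would handle this by an area/index and positivity argument as in [Bc]'', but Baldwin's lemma is for the once-pointed setting and the contact class in $\widehat{HF}$; here you are in a \emph{doubly-pointed} triple computing a map on $HFK^-$, so the positivity argument must be re-run with the extra base point $w$ and the constraint $n_z(\psi)=0$ replacing the single-base-point constraint. That re-run is where the real work sits, and it is not an afterthought: one needs (a) a weak-admissibility check for the triply-periodic domains of $(\Sigma,\boldsymbol\gamma,\boldsymbol\beta,\boldsymbol\alpha,w,z)$, which uses that the special $\beta$ curve (the parallel copy of $B$) is homologically independent of the others, and (b) a local-multiplicity analysis around each small triangle $\Delta_i$, together with the vanishing of $\pi_2(\mathbf x,\mathbf x)$ coming from the rational-homology-sphere hypothesis on $-M'$, to force any nonnegative triangle with $n_z=0$ in the fixed $\mathrm{Spin}^c$ class to coincide with $\psi_0$. (Your stated reason for needing $M'$ a QHS --- pinning down $\mathfrak s$ without conjugation ambiguity --- is not where the hypothesis is actually used; it enters through the periodic-domain analysis.) You should also flag that weak admissibility only suffices directly in the hat theory; for the minus theory one either checks strong admissibility or works over $\mathbb F[[U]]$. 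Your alternative route through sutured gluing maps is plausible in spirit but would require its own argument relating those maps to the knot-Floer cobordism map with marked point, which you have not supplied. In short: same approach as the paper, correct skeleton, but the uniqueness lemma --- the one piece of genuine content --- is asserted rather than proved.
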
 

Combining the proof of Theorem \ref{th 1.1} in this paper and the proof of Theorem 1.1 in \citep{MTn}, it's easy to see that $Spin^c$ structures on the cobordism that makes the naturality of contact invariant and LOSS invariant work are the same $Spin^c$ structure, and we can say more about this $Spin^c$ structure $\mathfrak{s}$.

\begin{proposition}
{\label{prop 1.4}} 
Assume in the situation of Theorem \ref{th 1.1} that $S$ is null-homologous and both $Y$ and $Y_n(S)$ are rational homology sphere. Then the $\mathfrak{s}$ in Theorem \ref{th 1.1} and Corollary \ref{cor 1.2}, as well as in \cite[Theorem 1.1 (for integer surgery)]{MTn} has the property that $$ \langle c_1(\mathfrak{s}),[\Tilde{F}] \rangle = 
      rot(S)+n-1 $$
 where $F$ is a Seifert surface for S and $\Tilde{F}$ is obtained by attaching the core of the $2$-handle to $F$ in $W$.
\end{proposition}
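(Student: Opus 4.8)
The plan is to compute the first Chern class of $\mathfrak{s}$ by evaluating it against the capped Seifert surface $[\Tilde{F}]$ using the adjunction-type formula for $Spin^c$ structures carrying the contact invariant, and the explicit identification of $\mathfrak{s}$ coming from the proof of Theorem \ref{th 1.1}. First I would recall that since $S$ is null-homologous with Seifert surface $F$, capping off the core of the $2$-handle in $W$ produces a closed surface $\Tilde{F}$ whose self-intersection number in $W$ is the surgery framing, namely $tb(S)+n$ (the contact framing shifted by $n$), and whose genus equals the genus of $F$. The homology class $[\Tilde{F}]$ generates the relevant piece of $H_2(W,\partial W)$, so $\langle c_1(\mathfrak{s}),[\Tilde{F}]\rangle$ is a well-defined integer which we must pin down.

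Next I would invoke the key structural fact established in the proof of Theorem \ref{th 1.1} (and matched with \cite[Theorem 1.1]{MTn}): the $Spin^c$ structure $\mathfrak{s}$ is the one for which the cobordism map sends the contact class to the contact class, and this $\mathfrak{s}$ restricts on each end to the canonical $Spin^c$ structure of the respective contact structure. Since viewing $W$ (or rather $-W$) as a capping-off cobordism realizes it — after the upside-down reinterpretation described before Theorem \ref{capoffthm} — as a Stein/Weinstein-type $2$-handle attachment along the Legendrian $S$ with the negatively-stabilized framing, I would use the standard formula for the evaluation of $c_1$ of the canonical $Spin^c$ structure of such a handle attachment on the core-capped surface: for a single Legendrian $2$-handle this evaluation is the rotation number $rot$ of the attaching Legendrian. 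The extra $n-1$ contributions come from the $n-1$ negative stabilizations that are part of the $+n$ contact surgery: each negative stabilization decreases the Thurston--Bennequin invariant by one and changes the rotation number by $\pm 1$, and tracking these through the open-book/Heegaard-triple model (as in \citep{DGS}, \citep{LSct}) contributes $+1$ each to the pairing, yielding $rot(S)+n-1$. Concretely I would set up the relative $Spin^c$ computation on the Heegaard triple used to define $F_{B,\mathfrak{s}}$, read off $c_1(\mathfrak{s})$ from the intersection data of the $\alpha$, $\beta$, $\gamma$ curves together with the basepoints, and evaluate against the periodic domain representing $[\Tilde{F}]$.

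The main obstacle I expect is bookkeeping the stabilization contributions correctly and consistently with the sign/orientation conventions: the $+n$ surgery is built from a $+1$ surgery on $S$ together with $-1$ surgeries on $n-1$ Legendrian push-offs that have been negatively stabilized, and one must verify that the capped surface $\Tilde{F}$ in the composite cobordism (a Seifert surface for $S$ capped by the core of the genuine $2$-handle, disjoint from the cobordisms of the auxiliary push-offs) intersects the other cores in exactly the right way to account for the $n-1$. A clean way to handle this is to use the relation $rot$ of a negatively stabilized push-off versus $rot(S)$, combined with the behavior of $c_1$ under composition of cobordisms, so that the total evaluation telescopes to $rot(S) + (n-1)$. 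Once the single-handle case is matched with the Stein-handle formula, the general case follows by additivity of $c_1$ evaluations over the composite cobordism and the computation in \citep{MTn} of the analogous quantity for the contact invariant, which must agree since the two $Spin^c$ structures coincide.
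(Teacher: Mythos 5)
Your proposal hinges on the ``standard formula'' that for a Legendrian $2$-handle the pairing $\langle c_1(\mathfrak{s}), [\Tilde{F}]\rangle$ equals the rotation number of the attaching Legendrian, but that formula is valid only for Stein/Weinstein handle attachment, i.e.\ for contact $(-1)$-surgery. Contact $(+n)$-surgery is not a Stein handle, and in the DGS decomposition $(+n)$-surgery on $S$ is a $(+1)$-surgery on a push-off of $S$ together with $(-1)$-surgeries on $n-1$ further (stabilized) push-offs; the $(+1)$-piece has no Stein/Weinstein structure, so there is no formula to ``telescope'' from. This is not just a bookkeeping issue you can defer: the missing $(+1)$ contribution is precisely the piece that is hard, and it is also the source of the sign ambiguity present in Mark--Tosun that the proposition is claiming to resolve. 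A further concern is that the DGS composite cobordism (with $n$ handles) is not the same $4$-manifold as the single $2$-handle cobordism $W$; identifying your ``composite'' surface class with $[\Tilde{F}]\in H_2(W)$ requires an argument (handle slides and blow-downs) that your proposal does not supply, and the signs there are delicate.

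The paper takes an essentially different and more direct route. It first proves a $c_1$-evaluation formula for the capping-off cobordism (Theorem \ref{capping off spinc}(ii)) by an explicit computation in a pointed Heegaard triple subordinate to that cobordism, using the first Chern class / dual spider number formula of Ozsv\'ath--Szab\'o together with the Euler-measure identification of $\rotq$ from Lemma \ref{lem: rotation and euler measure}. This gives $\langle c_1(\mathfrak{s}),[\Tilde{V}]\rangle = -p(\rotq(L')+1)$ where $\Tilde{V}$ is the capped rational Seifert surface of the capped-off binding and $L'$ is its Legendrian push-off in $Y_n(S)$. It then relates $[\Tilde{V}]$ to $[\Tilde{F}]$ (Lemma \ref{lem: homology class depend on smooth coefficient}) by computing the algebraic intersection of the two surface classes using the DGS handle slides, which is what actually pins down the sign. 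Finally it computes $\rotq(L')$ in terms of $\rotq(S)$ via the linking-matrix formula of Ding--Li--Wu. Proposition \ref{prop 1.4} then follows by specializing to $y=1$. If you want to salvage your approach, you would need either to prove a $c_1$ formula valid for the $(+1)$-surgery cobordism directly (which is, in effect, what the paper's capping-off computation does), or to avoid the $(+1)$-piece entirely; invoking the Stein-handle formula and hoping the stabilizations each contribute $+1$ does not get you there.
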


The above proposition is a direct consequence of Theorem \ref{thm: spin^c for RHS} which is an analogous statement but for contact $+n$ surgery on a rationally null-homologous Legendrian $S$.

\begin{remark}
    In \citep{MTn} Mark-Tosun prove the above proposition (for the contact invariant) but with a \textbf{sign} ambiguity (such sign ambiguity also shows up in \cite{DingLiWuContact+1onrationalsphere}). Here we resolve this ambiguity. In other words we can characterize this $Spin^c$ structure not just up to conjugation, and the proof is entirely different. 
\end{remark}

As a direct consequence of Theorem \ref{thm: spin^c for RHS} we have the following corollary which extends a result of Golla \citep[Proposition 6.10]{MarcoOSinvariantsofcontactsurgeries} from $(S^3,\xi_{std})$ to all contact rational homology sphere (Golla does not require the smooth coefficient to be non-zero but we do). 

\begin{corollary}
Let $(Y,\xi)$ be a contact rational homology sphere, $S$ be a rationally null-homologous Legendrian knot, and $L$ be a null-homologous Legendrian knot. If $Y_n(S)$ is again a rational homology sphere, then the contact invariant $c(\xi^-_n(S))$ as well as the LOSS invariant $\mathfrak{L}(Y_n(S),\xi^-_{n}(S),L_S)$ are independent of the Legendrian isotopy class of $S$, when the classical invariants are fixed.
\end{corollary}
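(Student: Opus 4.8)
The plan is to reduce this to two facts already in hand: the naturality identities of Theorem~\ref{th 1.1} (together with their contact‑invariant analogue, due to Mark--Tosun \citep{MTn}), and the conjugation‑free computation of the relevant $Spin^c$ structure provided by Theorem~\ref{thm: spin^c for RHS}. Fix the smooth knot type of $S$, its Thurston--Bennequin invariant $tb(S)$ and its rotation number $rot(S)$, and let $S_1$, $S_2$ be two Legendrian representatives realising these classical invariants. Since contact $(+n)$‑surgery along a Legendrian knot is smooth $(tb+n)$‑surgery, the two surgeries have the same smooth coefficient; hence $Y_n(S_1)$ and $Y_n(S_2)$ are diffeomorphic, the $2$‑handle cobordisms $W_1$, $W_2$ (and their orientation reversals $-W_1$, $-W_2$) are diffeomorphic rel boundary, and, because $S$ is disjoint from the null‑homologous Legendrian $L$, the knot $L$ lies in the product part of the cobordism and is carried to one and the same null‑homologous knot $L'$ in $Y_n(S_1)\cong Y_n(S_2)$.

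The first step is to match up the distinguished $Spin^c$ structure $\mathfrak{s}$ on $-W$ of Theorem~\ref{th 1.1} under these identifications. A priori $\mathfrak{s}$ is only pinned down up to conjugation, and this is not enough: under $-W_1\cong -W_2$ the candidate on $-W_1$ could be sent to the conjugate of the candidate on $-W_2$, and the maps $F_{-W,\mathfrak{s}}$ and $F_{-W,\bar{\mathfrak{s}}}$ need not agree. Here is where Theorem~\ref{thm: spin^c for RHS} (in the null‑homologous case, Proposition~\ref{prop 1.4}) is essential: it computes $\langle c_1(\mathfrak{s}),[\Tilde{F}]\rangle$, with its sign, as an explicit function of $rot(S)$, $n$ and the rational self‑linking data of $S$, all of which are classical invariants — this is exactly the improvement over the sign‑ambiguous statements of \citep{MTn} and \citep{DingLiWuContact+1onrationalsphere}. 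Since $[\Tilde{F}]$ is identified under $-W_1\cong -W_2$ and the right‑hand side is the same for $S_1$ and $S_2$, it follows that $\mathfrak{s}_1$ is carried to $\mathfrak{s}_2$ (and not to $\bar{\mathfrak{s}}_2$).

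Next I would use that the induced map $F_{S,\mathfrak{s}}$ in minus‑flavoured knot Floer homology is, by its very construction via a doubly pointed Heegaard triple, an invariant of the $Spin^c$ cobordism $(-W,\mathfrak{s})$ together with the (product) configuration of $L$ inside it; as this configuration is the same for $S_1$ and $S_2$, we get $F_{S_1,\mathfrak{s}_1}=F_{S_2,\mathfrak{s}_2}$ under the identifications of source and target. The input of this map, namely $\mathfrak{L}(Y,\xi,L)\in HFK^-(-Y,L)$ (respectively $c(\xi)\in\widehat{HF}(-Y)$), makes no reference to $S$, so Theorem~\ref{th 1.1} and the Mark--Tosun naturality statement give
\begin{align*}
\mathfrak{L}(Y_n(S_1),\xi_n^-(S_1),L_{S_1}) &= F_{S_1,\mathfrak{s}_1}(\mathfrak{L}(Y,\xi,L)) = F_{S_2,\mathfrak{s}_2}(\mathfrak{L}(Y,\xi,L)) \\
&= \mathfrak{L}(Y_n(S_2),\xi_n^-(S_2),L_{S_2}),
\end{align*}
and likewise $c(\xi_n^-(S_1)) = c(\xi_n^-(S_2))$, which is the assertion.

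I expect the substance of the corollary to lie not in this deduction but upstream, in Theorem~\ref{thm: spin^c for RHS}: producing the conjugation‑free formula for $\langle c_1(\mathfrak{s}),[\Tilde{F}]\rangle$ is precisely what makes the argument close. A secondary point to be careful about is that the diffeomorphism $Y_n(S_1)\cong Y_n(S_2)$ can be chosen to fix $L'$ setwise, so that the comparison of the LOSS invariants of $L_{S_1}$ and $L_{S_2}$ is meaningful; this is automatic when $S_1$ and $S_2$ are smoothly isotopic in the complement of $L$, which is the natural reading of the hypothesis, and it is not needed at all for the contact‑invariant part of the statement.
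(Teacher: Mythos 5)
Your proposal is essentially correct and is the argument the paper has in mind: the paper explicitly presents the corollary as a direct consequence of Theorem~\ref{thm: spin^c for RHS} (and gives no further proof), and your deduction is the expected way to fill that in, with the correct emphasis that the conjugation-free Chern-number formula is what closes the argument.

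One step is glossed over and is worth spelling out: from ``the Chern number on $[\Tilde{F}]$ agrees'' you conclude that $\mathfrak{s}_1$ is carried to $\mathfrak{s}_2$ rather than to some other $Spin^c$ structure with the same evaluation. What you actually need is the two-part fact the paper uses in its proof of Theorem~\ref{th 1.5}: (i) both $\mathfrak{s}_1$ and $\mathfrak{s}_2$ restrict on the $Y$ end to the $Spin^c$ structure $\mathfrak{s}_\xi$ of $\xi$ (forced because the cobordism map is applied to the class $c(\xi)$, resp.\ $\mathfrak{L}(Y,\xi,L)$, sitting in that summand), and (ii) a $Spin^c$ structure on the surgery cobordism is then determined by its value $\langle c_1(\mathfrak{s}),[\Tilde{F}]\rangle$, because two choices with the same boundary restriction differ by a multiple of $PD[\Tilde{F}]$, which changes the pairing by a nonzero multiple of $[\Tilde{F}]\cdot[\Tilde{F}]$. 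The latter is nonzero precisely because $Y_n(S)$ is assumed a rational homology sphere (equivalently, $x+ny\neq 0$), which is exactly why the hypothesis you might otherwise view as cosmetic is genuinely needed and why the paper flags that Golla's version in $S^3$ does not need it. With that inserted, the rest of the argument---functoriality of $F_{-W,\mathfrak{s}}$ in the $Spin^c$ cobordism equipped with $L$, and the fact that the input classes $c(\xi)$, $\mathfrak{L}(Y,\xi,L)$ do not see $S$---goes through as you wrote it. Your final caveat, that for the LOSS part one must read ``same classical invariants'' as including the smooth isotopy class of $S$ in the complement of $L$, is also correct and is the right reading of the statement.
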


\begin{remark}
 The LOSS invariants are actually only well defined up to sign, and up to the action of the mapping class group  on $(Y,L)$ \citep{OSct} (that is, the group of isotopy classes of diffeomorphisms of $Y$ fixing $L$). We denote by $[\mathfrak{L}]\in  HFK^- (-Y,L)/\pm MCG(Y,L)$  the image of $\mathfrak{L}$ when we quotient out these actions, and similarly for the other types of LOSS invariants.
\end{remark}

Using the Theorem \ref{th 1.1} and  Proposition \ref{prop 1.4} one can produce infinite families of smooth knots that have distinct Legendrian (resp. transverse) representatives with same Thurston-Bennequin and rotation numbers (resp. self linking number).  More specifically starting with two Legendrian representatives of knot $K$ with different $[\mathfrak{L}]$ or $[\mathfrak{\widehat{L}}]$, it is always possible to produce two Legendrian representatives of a new knot $K'$ that also have different $[\mathfrak{L}]$ or $[\mathfrak{\widehat{L}}]$, essentially by adding positive twists to parallel strands in $K$. The procedure can be described more precisely as follows. 

Let $(Y,\xi)$ be a contact 3-manifold, and consider a triple $(L, \sigma_n, B)$ where $L$ is a Legendrian knot in $(Y,\xi)$, $\sigma_n=$ \{$e_i|e_i$ is  an  oriented  Legendrian arc  of  $L$  for  $i=1,2,...,n$\}, and $B$ is a Darboux ball. We say this is a \textbf{compatible triple} if the following hold:

\begin{enumerate}
    \item $B$ only intersects $L$ at $\sigma_n$. 
    \item Inside the Darboux ball the front projections of the arcs $e_i$ are horizontal, parallel and have the same orientation for all $i=1,2,...,n$. In other words $\sigma_n$ is a collection of $n$ Legendrian push offs of one oriented horizontal arc.
\end{enumerate}

Given a compatible triple $(L, \sigma_n, B)$ we can construct a new oriented Legendrian knot {\boldmath $L_\sigma$} by adding a full non-zigzagged positive twist to the front projection of $\sigma_n$ in $B$ (See figure \ref{New L} for example when $n=2$).

\begin{figure}
    \centering
    \includegraphics[width=\textwidth]{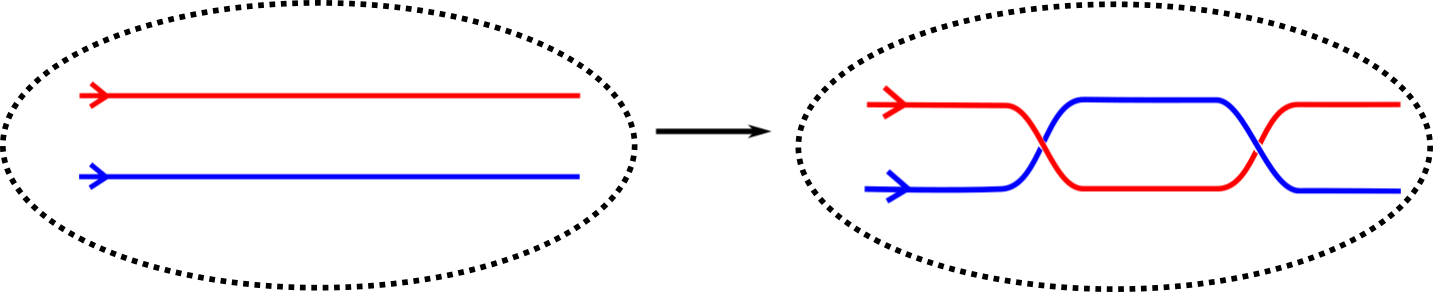}
    \caption{Example when there are two parallel arcs (the blue and red arcs are $e_1$ and $e_2$, and the dotted circle represents a Darboux ball). On the left is part of $L$ inside a standard Darboux ball. After doing the twist we get the right diagram which is still inside the Darboux ball and is part of the new knot $L_\sigma$}
    \label{New L}
\end{figure} 

Note that since all arcs $e_i$ are horizontal, parallel and oriented in the same direction there is no ambiguity of the new Legendrian knot $L_\sigma$ once given a compatible $(L, \sigma_n, B)$. Now we are able to state the theorem. 

\begin{theorem}
{\label{th 1.5}}
    In the above setting let $(L, \sigma_n, B)$ and $(L', \sigma_n', B')$ be two compatible triples in $(Y,\xi)$. Assume  
    
    \begin{enumerate}
        \item [$\bullet$] $L$ and $L'$ are smoothly ambiently isotopic
        \item [$\bullet$] The isotopy sends $B$ contactmorphically to $B'$, and $e_i$ to $e_i'$.
    \end{enumerate} 
    Then $L_\sigma$, $L'_{\sigma'}$ are smoothly isotopic.  Moreover if Y is a rational homology sphere and $L$ is null-homologous, and if $L$ and $L'$ have different $[\mathfrak{L}]$ or $[\mathfrak{\widehat{L}}]$, then so do $L_\sigma$ and $L'_{\sigma'}$.
    \end{theorem}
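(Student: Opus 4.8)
The plan is to realize the passage from a compatible triple $(L,\sigma_n,B)$ to the twisted knot $L_\sigma$ as the effect of a contact $(+n)$-surgery along a specific Legendrian unknot $S$ sitting in the Darboux ball $B$, and then feed this into Theorem \ref{th 1.1} and Proposition \ref{prop 1.4}. First I would identify $S$: take $S$ to be a Legendrian unknot in $B$ encircling the $n$ parallel strands $\sigma_n$, with Thurston--Bennequin number $-1$ (so that $+n$ contact surgery is topologically $(-1+\frac1n)$-surgery in the Darboux ball, i.e. an $n$-fold Dehn twist along the meridian disk), and with rotation number chosen to be $0$; one must check that $B\setminus L$ actually contains such an $S$ as a genuine Legendrian knot and that after the surgery the contact manifold is again $(Y,\xi)$ with $L$ carried to $L_\sigma$ and each arc $e_i$ carried to the correspondingly twisted arc. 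The non-zigzagged full positive twist is exactly what a $+n$ surgery along this unknot produces, with the convention $\xi_n^-$ (all negative stabilizations) matching the "non-zigzagged" prescription; this is a local model computation in the Darboux ball and is essentially the standard picture relating contact surgery on a Legendrian unknot to adding twists, as in \citep{DGS}.

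Next, the smooth statement: since the surgery is supported in $B$ (resp. $B'$) and the ambient smooth isotopy carries $B$ to $B'$ and $e_i$ to $e_i'$, it carries $S$ to $S'$ (up to smooth isotopy in the ball), hence the surgered manifolds are identified and $L_\sigma$ goes to $L'_{\sigma'}$ smoothly. The hypothesis that the isotopy is contactomorphic on the balls and matches the arcs is what upgrades this to an identification respecting the relevant contact and Floer-theoretic data. Then I would invoke Theorem \ref{th 1.1}: writing $W$ for the $2$-handle cobordism of the $+n$ surgery along $S$ (which here is a cobordism from $Y$ to $Y$), there is a $Spin^c$ structure $\mathfrak{s}$ with $F_{S,\mathfrak{s}}(\mathfrak{L}(Y,\xi,L))=\mathfrak{L}(Y,\xi,L_\sigma)$, and likewise $F_{S',\mathfrak{s}'}(\mathfrak{L}(Y,\xi,L'))=\mathfrak{L}(Y,\xi,L'_{\sigma'})$. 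Because $S$ is a null-homologous unknot with $tb=-1$ and $rot=0$, Proposition \ref{prop 1.4} pins down $\langle c_1(\mathfrak{s}),[\tilde F]\rangle = rot(S)+n-1 = n-1$, so $\mathfrak{s}$ and $\mathfrak{s}'$ correspond under the diffeomorphism identifying the two surgeries; that is, $F_{S,\mathfrak{s}}$ and $F_{S',\mathfrak{s}'}$ are literally the same map after the identification $HFK^-(-Y,L)\cong HFK^-(-Y,L')$ coming from the smooth isotopy. Since that identification is an isomorphism respecting the $\pm MCG$ ambiguity, if $[\mathfrak{L}(L)]\neq[\mathfrak{L}(L')]$ then applying the same homomorphism to both cannot make them equal unless the map kills the difference — so I would actually argue contrapositively: if $[\mathfrak{L}(L_\sigma)]=[\mathfrak{L}(L'_{\sigma'})]$, pull back along $F_{S,\mathfrak{s}}$...

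but that last step needs care, since $F_{S,\mathfrak{s}}$ need not be injective. The honest statement I would prove is the one actually claimed: I will show that the isomorphism $\Phi\colon HFK^-(-Y,L)\to HFK^-(-Y,L')$ induced by the ambient contactomorphism satisfies $\Phi(\mathfrak{L}(L))=\mathfrak{L}(L')$ up to sign and $MCG$ (this is just naturality of the LOSS invariant under contactomorphisms, since $\Phi$ comes from an honest contactomorphism of $(Y,\xi)$ taking $L$ to $L'$ — wait, it does \emph{not}, it only takes $B$ to $B'$). The main obstacle, and the thing I expect to require the most work, is precisely this: the hypotheses do \emph{not} give a global contactomorphism $(Y,\xi,L)\to(Y,\xi,L')$, only a smooth isotopy that is contact near the balls; so I cannot directly compare $\mathfrak{L}(L)$ and $\mathfrak{L}(L')$. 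The resolution is that I never need to — the theorem's conclusion compares $L_\sigma$ with $L'_{\sigma'}$ under the \emph{hypothesis} that $L,L'$ already have different LOSS classes, and the right formulation is: there is a commuting square in which the vertical maps are the $F_{S,\mathfrak{s}}$, $F_{S',\mathfrak{s}'}$ and the horizontal maps are the (smooth-isotopy-induced, hence canonical up to the stated ambiguities) identifications, and these vertical maps send $\mathfrak{L}(L)\mapsto\mathfrak{L}(L_\sigma)$, $\mathfrak{L}(L')\mapsto\mathfrak{L}(L'_{\sigma'})$. Chasing this square: if $[\mathfrak{L}(L_\sigma)]=[\mathfrak{L}(L'_{\sigma'})]$ then the two images agree, but a priori the preimages $[\mathfrak{L}(L)],[\mathfrak{L}(L')]$ could still differ. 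So the clean way is to observe that the construction is \emph{reversible}: a negative ($-n$, i.e. $+$... no) — rather, one undoes the twist by a further surgery, giving a left inverse to $F_{S,\mathfrak{s}}$ on the relevant classes, or more simply one notes the twisting operation $L\mapsto L_\sigma$ has a well-defined inverse "untwisting" operation realized again by contact surgery on a Legendrian unknot, so Theorem \ref{th 1.1} applied in that direction gives the reverse implication. Hence $[\mathfrak{L}(L)]\neq[\mathfrak{L}(L')]$ forces $[\mathfrak{L}(L_\sigma)]\neq[\mathfrak{L}(L'_{\sigma'})]$. I would write the argument in this contrapositive-with-inverse form, with the bulk of the technical content being the local contact-geometry identification of the twist with $+n$ surgery on the $tb=-1$, $rot=0$ unknot and the verification that the two surgery data match under the given isotopy.
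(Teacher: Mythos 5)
Your overall strategy (realize the twist as a contact surgery on a Legendrian unknot $S$ in the Darboux ball, then combine Theorem~\ref{th 1.1} with Proposition~\ref{prop 1.4} to pin down the $Spin^c$ structure) matches the paper's approach in broad outline, but there are two genuine gaps.

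First, the surgery you set up is wrong, and this stems from a misreading of the theorem: the integer $n$ is the number of parallel strands, not the number of twists. Only \emph{one} full positive twist is inserted. The correct move (Lemma~\ref{lemma 5.1}) is to take $S$ to be a maximal-$tb$ unknot ($tb=-1$, $rot=0$) encircling the $n$ strands and perform contact $(+2)$-surgery on it, which is smooth $(+1)$-surgery, hence returns $(Y,\xi)$ unchanged while inserting one full \emph{negative} twist. Crucially, you apply this to $L_\sigma$ to undo the twist and land on $L^{-n}$ (the $n$-fold negative stabilization of $L$, whose LOSS invariant equals that of $L$), rather than applying a surgery to $L$ to produce $L_\sigma$. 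Your proposed $(+n)$-contact surgery has smooth coefficient $n-1$ (not $-1+\tfrac1n$; contact $r$-surgery is smooth $tb+r$), which for $n>1$ does not preserve the manifold and does not produce a twist. Working in the ``untwist'' direction also dissolves the injectivity worry you wrestled with: no inverse of $F_{S,\mathfrak{s}}$ is needed because the map already points from $L_\sigma$'s side to $L$'s side.

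Second, the passage from ``$F_{S,\mathfrak{s}}$ and $F_{S',\mathfrak{s}'}$ are the same map under the identification'' to the conclusion on $[\mathfrak{L}]$-classes is not as simple as you state. The LOSS invariant is only well defined modulo $\pm$ and the action of $MCG(Y,L)$, and applying a homomorphism to two orbits need not preserve distinctness. The paper resolves this with a commutative diagram (equation~\eqref{CD2}, using Juh\'asz--Thurston--Zemke naturality and naturality of cobordism maps under diffeomorphisms): assuming $d^*(\mathfrak{L}(L_\sigma))=\mathfrak{L}(L'_{\sigma'})$ for some $d\in MCG(Y,K_\sigma)$, one shows $d(S,\mathfrak{s})=(S,\mathfrak{s})$ (after isotoping $d$ to be the identity near the ball and using the Chern-class characterization from Proposition~\ref{prop 1.4}), so that the diagram forces $d_S^*(\mathfrak{L}(L))=\mathfrak{L}(L')$, contradicting the hypothesis. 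Your sketch acknowledges the difficulty but does not supply this argument; without it the conclusion does not follow.
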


As an example (application) of Theorem 1.6 we will see the following corollary.
\begin{corollary}
{\label{cor 1.6}}
    In standard tight $S^3$, the mirror of the knot   $9_7$ in Rolfsen’s table is neither Legendrian simple nor transversely simple. 
\end{corollary}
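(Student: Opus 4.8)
The plan is to realize $m(9_7)$ (the mirror of $9_7$) as the output of the twisting construction of Theorem~\ref{th 1.5}, fed by a knot that is already known to be Legendrian and transversely non-simple, and then to quote Theorem~\ref{th 1.5} to transport the non-simplicity to $m(9_7)$.

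First I would fix a seed: a knot $K \subset (S^3,\xi_{std})$ carrying two Legendrian representatives $L$ and $L'$ with equal Thurston--Bennequin and rotation numbers but with $[\mathfrak{\widehat{L}}(L)] \neq [\mathfrak{\widehat{L}}(L')]$; such pairs are available in the literature (for instance among knots distinguished by grid invariants, which agree with the LOSS invariant). It matters that the \emph{hat} invariants disagree, not merely $[\mathfrak{L}]$, since $\mathfrak{\widehat{L}}$ is unchanged by negative stabilization, and this is what will in the end give transverse, and not just Legendrian, non-simplicity.

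Next I would put the seed into twisting position. Because $L$ and $L'$ represent the same smooth knot, one can isotope them so that there is a Darboux ball meeting each in the same collection $\sigma_n$ of $n$ parallel, coherently oriented horizontal Legendrian strands, with the smooth ambient isotopy carrying $B$ to $B'$ contactomorphically and $e_i$ to $e_i'$; that is, $(L,\sigma_n,B)$ and $(L',\sigma_n',B')$ become compatible triples meeting the hypotheses of Theorem~\ref{th 1.5}. The ball and the strands must be chosen with foresight: the point is to arrange that inserting a full positive twist into $\sigma_n$ converts the smooth knot type $K$ into $m(9_7)$. Verifying this is a diagrammatic check — present $K$ by a diagram in which the chosen strands form a trivial $n$-braid, splice in the full positive twist, and reduce by Reidemeister moves to a standard diagram of $m(9_7)$; the \emph{positive} sign of the twist, together with the chirality of $K$, is what singles out $m(9_7)$ and not $9_7$. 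For a good choice of $K$ and $n=2$ this check is short, and I expect the genuinely non-formal part of the argument to be exactly the search for such a $K$ and such strands, together with the bookkeeping of $tb$ and $rot$.

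Granting the setup, the conclusion is immediate. Theorem~\ref{th 1.5} gives that $L_\sigma$ and $L'_{\sigma'}$ are smoothly isotopic representatives of $m(9_7)$ with $[\mathfrak{\widehat{L}}(L_\sigma)] \neq [\mathfrak{\widehat{L}}(L'_{\sigma'})]$, and they share the same classical invariants because the twist changes $tb$ and $rot$ only by amounts determined by $n$ and the common local front picture. So $m(9_7)$ has two Legendrian representatives with the same $(tb,rot)$ that are not Legendrian isotopic, hence it is not Legendrian simple. Passing to positive transverse push-offs $T$ of $L_\sigma$ and $T'$ of $L'_{\sigma'}$, we have $\mathfrak{\widehat{T}}(T)=\mathfrak{\widehat{L}}(L_\sigma)$ and $\mathfrak{\widehat{T}}(T')=\mathfrak{\widehat{L}}(L'_{\sigma'})$, so $[\mathfrak{\widehat{T}}(T)] \neq [\mathfrak{\widehat{T}}(T')]$ while $sl(T)=tb(L_\sigma)-rot(L_\sigma)=tb(L'_{\sigma'})-rot(L'_{\sigma'})=sl(T')$; therefore $m(9_7)$ is not transversely simple either.
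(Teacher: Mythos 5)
Your strategy is exactly the paper's: start from a knot whose Legendrian non-simplicity is already witnessed by $[\mathfrak{\widehat{L}}]$, arrange two compatible triples inside matching Darboux balls, and invoke Theorem~\ref{th 1.5} to transport the inequality of hat LOSS classes through a full positive twist that lands you on $m(9_7)$. You also correctly note that the hat invariant is the right one to use, since its invariance under negative stabilization is what converts the Legendrian statement into a transverse one.

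However, you stop exactly where the real work begins. You write that "the genuinely non-formal part of the argument" is "the search for such a $K$ and such strands," but you do not produce them, so as written this is a plan rather than a proof. The paper's proof supplies precisely this data: the seed is the Eliashberg--Chekanov twist knot $E_5 = m(7_2)$, whose two Legendrian representatives with equal $(tb,rot)$ and distinct $[\mathfrak{\widehat{L}}]$ are furnished by \citep[Theorem~1.3]{OSct}; the twist is applied to $n=2$ parallel strands in a specified ball (the green circle in Figure~\ref{m(7_2)}); and one must also check, as the paper does, that the smooth isotopy between the two representatives can be taken to be the \emph{identity} on that ball --- a condition stronger than "carries $B$ contactomorphically to $B'$" but which is what makes the hypothesis of Theorem~\ref{th 1.5} easy to verify in practice. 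Finally, the diagrammatic identification of the twisted knot with $m(9_7)$ (rather than $9_7$ or some other knot) is a concrete Reidemeister-move computation that your proposal only gestures at. Without naming the seed, exhibiting the ball and strands, and carrying out that identification, the corollary is not actually established.
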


We will see  in section \ref{section 5.2}  there are many more non-simple knot examples  that we can derive from  Theorem \ref{th 1.5} (see Theorem \ref{th 5.2}, Corollary \ref{th 5.3}, and the discussion about figure \ref{(2,3)(2,3)}).

The paper is organized as follows. In section 2 we will briefly review some basic preliminaries of knot Floer theory, the LOSS invariant, and maps induced by surgery. In section 3 we will talk about  contact $+n$ surgery and capping off. Then in section 4 we will prove  Theorem \ref{th 1.1}. In section 5 we will prove Theorem \ref{th 1.5}, Corollary \ref{cor 1.6}, and see more examples of non-simple knots. Finally in section 6 we will prove the $Spin^c$ structure formula for contact surgery on rationally null-homologous knot (c.f. Theorem \ref{thm: spin^c for RHS}) that implies Proposition \ref{prop 1.4}.\\

\textbf{Acknowledgements.} The author would like to thank his advisor Tom Mark for his patience and numerous helpful guidance and suggestions. The author would also like to thank B\"ulent Tosun for early stage discussions, and John Etnyre and Tye Lidman for useful conversations about the applications. The author was supported in part by grants from the NSF (RTG grant DMS-1839968) and the Simons Foundation (grants 523795 and 961391 to Thomas Mark).

\section{Knot Floer Preliminaries}

\subsection{Knot Floer Homology}
We will use the same notation and construction as in \citep{OShk}. A doubly pointed Heegaard diagram $(\Sigma,\alpha, \beta, w,z)$ consists of the following information.  $\Sigma$ is an oriented genus $g$ surface, $\alpha=\{\alpha_1,...,\alpha_g\}$ is a $g$-tuple of disjoint homologically linearly independent circles on  $\Sigma$, $\beta=\{\beta_1,...\beta_g\}$ is another $g$-tuple of circles on $\Sigma$ similar to $\alpha$, and $z,w$ are two points on the complement of the $\alpha$ and $\beta$ curves. Such a diagram gives rise to a 3-manifold $Y$ in a standard way, by thinking of the $\alpha$ and $\beta$ circles as determining (the compressing disks in) handlebodies $H_\alpha$ and $H_\beta$ with $\partial H_\alpha = -\partial H_\beta = \Sigma$, and setting $Y = H_\alpha\cup_\Sigma H_\beta$.

Given an oriented null-homologous knot $K$ in some three manifold $Y$, one can construct a doubly pointed Heegaard diagram $(\Sigma,\alpha, \beta,w,z)$ describing $(Y,K)$ in the following sense. $(\Sigma, \alpha,\beta)$ is a Heegaard diagram for $Y$, and if we connect $z$ to $w$ by an embedded arc missing the $\alpha$ circles and pushed  a little bit into the $\alpha$ handlebody, and connect $w$ to $z$ by another embedded arc missing the $\beta$ circles and pushed into the $\beta$ handlebody, then the closed curve given by the union of those two arcs is exactly the knot $K$ in Y.  We say the doubly pointed Heegaard diagram $(\Sigma,\alpha, \beta,w,z)$ is compatible with $(Y,K)$. 

We can further associate a chain complex $CFK^-$ to a doubly pointed Heegaard diagram $(\Sigma,\alpha, \beta,w,z)$. Assume the $\alpha$ and $\beta$ curves intersect transversely, and consider the two tori $$\mathbb{T}_\alpha=\alpha_1 \times \alpha_2 \times ... \times \alpha_g, \quad \mathbb{T}_\beta=\beta_1 \times \beta_2 \times ... \times \beta_g$$ in the $g^{th}$ symmetric power $Sym^g(\Sigma)$. The chain complex $CFK^-$ is the free $\mathbb{F}[U]$-module generated by the intersection points of $\mathbb{T}_\alpha \cap \mathbb{T}_\beta$. The differential $\pa^-$ is defined as following: 

$$\pa^-{\textbf{x}}=\sum_{\{\textbf{y}\in \mathbb{T}_\alpha \cap \mathbb{T}_\beta\}} \sum_{\{\phi \in \pi_2(\textbf{x},\textbf{y}),\mu(\phi)=1,n_z(\phi)=0\}}\# \widehat{\mathfrak{M}}(\phi) \cdot U^{n_w(\phi)}\cdot \textbf{y}$$ 
where $\pi_2(\textbf{x},\textbf{y})$ is the set of homotopy class of disk connecting $\textbf{x}$ to $\textbf{y}$; $\mu(\phi)$ is the expected dimension of the moduli space $\mathfrak{M}(\phi)$ of holomorphic disks in the homotopy class $\phi$; $n_z(\phi)$ and $n_w(\phi)$ are the algebraic intersection numbers between $\phi$ and $\{z\}\times Sym^{g-1}(\Sigma)$ and $\{w\}\times Sym^{g-1}(\Sigma)$, respectively. The knot Floer homology groups $HFK^-$ and $\widehat{HFK}$ are the homology of the complexes $CFK^-$ and $\widehat{CFK}$ respectively, where $\widehat{CFK}$ is the same as $CFK^-$ except specializing $U=0$ throughout. It is shown in \citep{OShk} that under suitable admissiblity conditions for the Heegaard diagram \citep{OSht}, these homology groups are invariants of the smooth knot type $K$ in $Y$.

When the knot is null-homologous these groups are bi-graded and can be decomposed as follows: $${HFK^-(Y,K)=\bigoplus_{d\in \mathbb{Q},\mathfrak{s}\in Spin^c(Y,K)} HFK_d^-(Y,K,\mathfrak{s}),}$$
where $d$ is the Maslov grading and $\mathfrak{s}$, which run through relative $Spin^c$ structures, is the Alexander grading. 

\subsection{LOSS invariant for Legendrian knots}

In this section we review the construction for LOSS invariant \citep{LOSS}. As we describe below, given an oriented null-homologous Legendrian knot $L$ in some contact three manifold $(Y,\xi)$, one can find a doubly pointed Heegaard diagram $(\Sigma, \beta, \alpha,w,z)$ that is compatible with $(-Y,L)$ and associate a cycle in $CFK^-$, giving rise to an element in $HFK^-(-Y,L)$. 

To begin, we can find an open book $(P,\phi)$ that is compatible with $(Y,\xi)$ and contains $L$ as a homologically nontrivial curve on the page \citep[Proposition 2.4]{LOSS}. In particular, we assume we have a fixed homeomorphism between $Y$ and the relative mapping torus of $(P, \phi)$:
\[
Y \cong \frac{P\times [0,1]}{(x,1)\sim (\phi(x), 0)} \,\cup \partial P \times D^2
\] 

where we will blur the distinction between a boundary circle $C$ of $P$ and the core of the corresponding solid torus $C\times \{0\} \subset C\times D^2$. The fibers of the naturally induced map $Y\to S^1$, defined away from the binding circles $C \times \{0\}$, will be denoted by $P_t$ for $t\in S^1$.

Then we choose a family of properly embedded arcs $\bf\{a_i\}$ as basis for $P$, meaning that if we cut $P$ along $\bf\{a_i\}$ we get a disk. We can choose the basis $\bf\{a_i\}$ such that $L$, considered as lying on $P = P_{+1}$, only intersects $a_1$ transversely at one point and does not intersect with other $a_i$ for $i\neq 1$. We now construct a doubly pointed Heegaard diagram $(\Sigma, \beta, \alpha,w,z)$ that is compatible with $(-Y,L)$ using $(P,\phi,\bf\{a_i\})$.   

We first form the Heegaard surface $\Sigma$
as the union of two pages, $P_{+1} \cup -P_{-1}$. For the $\alpha$ and $\beta$ curves we start with the basis $\bf\{a_i\}$ lying on $P_{+1}$, and let $b_i$ be a push off of $a_i$ for all $i$ on $P_{+1}$, such that $a_i$ and $b_i$ intersect transversely at one point on $P_{+1}$ for each $i$. In particular the boundary points of $b_i$ are obtained from those of $a_i$ by pushing along $\partial P_{+1}$ in the direction determined by the orientation. Now we let $\alpha_i=a_i \cup \overline{a_i}$ and $\beta_i=b_i\cup \overline{\phi(b_i)}$ for all $i$, where $\overline{a_i}$ is the image of $a_i$ under the identity
map on the opposite page $-P_{-1}$ and $\overline{\phi(b_i)}$ is the image of $b_i$  under the monodromy map $\phi$ on $-P_{-1}$. Finally we place the base points $w,z$ on $P_{+1}$ such that $z$ is ``outside'' the thin strips between $a_i$ and $b_i$ for all $i$, and $w$ is in between $a_1$ and $b_1$. Note that there are two possibilities for the placement of $w$; we choose the one compatible with the orientation of $L$. Let $ \bf{x}$ $=(x_1,x_2,...,x_g)\in \mathbb{T_\alpha} \cap\mathbb{T_\beta}$ where $x_i=a_i\cap b_i$. Now we change the orientation of $Y$ and consider the diagram $(\Sigma,\beta,\alpha,w,z)$, which is compatible with $(-Y,L)$. We view $\bf{x}$ as an element in $CFK^-(-Y,L)$. It was shown in \citep{LOSS} that $\bf{x}$ is a cycle, and the homology class of $\bf{x}$, written  $\mathfrak{L}(L)\in HFK^-(-Y,L)$, is an invariant of the oriented Legendrian knot $L$ with values in the graded module $HFK^-(-Y, L)$, modulo its graded automorphisms. The construction  for $\widehat{\mathfrak{L}}(L) \in \widehat{HFK}(-Y,L)$ is the same, simply considering $\bf{x}$ as a cycle in $\widehat{CFK}(-Y,L)$.

Here are some properties of the LOSS invariant that we will use.

\begin{theorem}[\citep{LOSS}] \label{2.1}
Suppose that $L$ is an oriented Legendrian knot and denote the negative and  positive stabilizations of $L$ as $L^-$ and $L^+$. Then,
$\mathfrak{L}(L^-) = \mathfrak{L}(L)$ and $\mathfrak{L}(L
^+) = U \cdot \mathfrak{L}(L)$. Similarly $\mathfrak{\widehat{L}}(L^-) = \mathfrak{\widehat{L}}(L)$ and $\mathfrak{\widehat{L}}(L
^+) = U \cdot \mathfrak{\widehat{L}}(L)=0$.
\end{theorem}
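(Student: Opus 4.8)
The plan is to analyze how the doubly pointed Heegaard diagram constructed in the previous subsection changes under a single stabilization of $L$, and to track the distinguished generator $\mathbf{x}$ through that change. The starting point is the standard fact that a stabilization of a Legendrian knot $L$ on a page of a compatible open book $(P,\phi)$ can be realized by a \emph{positive Hopf plumbing}: we attach a one-handle to the page $P$ along two points of $\partial P$ lying on opposite sides of the point $L\cap a_1$, obtaining a new page $P'$, extend the monodromy by composing with a Dehn twist along the core curve $c$ of the new band, and push $L$ across the new handle. Whether the stabilization is positive or negative is encoded by the sign of the Dehn twist and, equivalently, by which side of the band $L$ runs over; this is exactly the open-book description of Legendrian (de)stabilization from \citep{Eltk} or \citep{LOSS}. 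Because the open book $(P',\phi')$ is still compatible with $(Y,\xi)$ and still carries the stabilized knot $L^\pm$ as a homologically nontrivial page curve, it is a legitimate input to the LOSS construction.

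Next I would set up the new Heegaard diagram. Adding the one-handle to $P$ introduces one new basis arc $a_{g+1}$ (dual to the cocore of the band), its push-off $b_{g+1}$, and a new pair of $\alpha$- and $\beta$-curves $\alpha_{g+1}, \beta_{g+1}$ on the enlarged Heegaard surface $\Sigma' = P'_{+1}\cup -P'_{-1}$. The arcs $a_1$ and $b_1$ get modified so that the knot now runs over the band; in particular $a_1$ and $b_1$ will intersect $\alpha_{g+1}$ or $\beta_{g+1}$ once. The basepoints $w,z$ are repositioned according to the recipe in the construction — and here is where the sign of the stabilization enters the diagram: in one case the basepoint $w$ ends up on the same side of the new curves as before (so that the new intersection point $x_{g+1}=a_{g+1}\cap b_{g+1}$ can be joined to $\mathbf{x}$ by a small bigon with $n_w=0$), while in the other case the only available bigon passes over $w$ once (forcing a factor of $U$). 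Concretely, I expect to show: for the negative stabilization the new distinguished generator $\mathbf{x}' = (x_1,\dots,x_g,x_{g+1})$ is carried to the old one by a stabilization isomorphism of the chain complexes that does not multiply by $U$, giving $\mathfrak{L}(L^-)=\mathfrak{L}(L)$; for the positive stabilization the corresponding identification carries $\mathbf{x}'$ to $U\cdot\mathbf{x}$, giving $\mathfrak{L}(L^+)=U\cdot\mathfrak{L}(L)$. The hat versions follow by setting $U=0$, which immediately gives $\widehat{\mathfrak{L}}(L^-)=\widehat{\mathfrak{L}}(L)$ and $\widehat{\mathfrak{L}}(L^+)=U\cdot\widehat{\mathfrak{L}}(L)=0$.

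To make the bigon-counting rigorous I would invoke the standard handle/triangle technology: the new handle region can be arranged so that there is a unique holomorphic disk (a small bigon) connecting $x_{g+1}$ to the relevant intersection point with multiplicity one, and an area/admissibility argument rules out all other contributions, so that the inclusion $\mathbf{x}\mapsto\mathbf{x}'$ (resp. $\mathbf{x}\mapsto$ its image) is a chain map inducing the claimed identity on homology. This is precisely the kind of local model used in the original stabilization arguments for the contact invariant $c(\xi)$ under Hopf plumbing (e.g.\ in \citep{LOSS} and the references therein), adapted to keep track of the extra basepoint $z$ so that we stay in the knot Floer setting rather than collapsing to $\widehat{HF}$.

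The main obstacle I anticipate is \emph{bookkeeping the basepoints correctly} — precisely pinning down, in the stabilized diagram, on which side of the new $\alpha_{g+1}$/$\beta_{g+1}$ curves the basepoints $w$ and $z$ land, and hence whether the connecting bigon has $n_w=0$ or $n_w=1$ (and confirming $n_z=0$ in both cases, so that we stay in the correct Alexander grading). This is a purely local, finite check in a neighborhood of the plumbing band, but it is the step where the distinction between positive and negative stabilization is actually made, so it must be done carefully; everything else (admissibility of the enlarged diagram, invariance of $\mathfrak{L}$ under the choices, passage to the hat flavor) is routine. I would finish by remarking that the sign and mapping-class-group ambiguities in $\mathfrak{L}$ are preserved under the stabilization isomorphism, so the stated equalities are equalities of the corresponding equivalence classes.
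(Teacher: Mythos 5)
The paper does not actually prove Theorem \ref{2.1}: the statement is cited directly from \citep{LOSS} and used as a black box, so there is no in-paper proof to compare against. What you have sketched is, in outline, the argument from \citep{LOSS} itself: realize the stabilization by plumbing a Hopf band to the page near $L\cap a_1$, slide $L$ over the band to obtain $L^\pm$ on the new page, construct the resulting doubly pointed diagram, and track the distinguished generator through the comparison, with the $U$-factor detected by whether the relevant small holomorphic disk crosses $w$ while leaving $z$ alone. That is a faithful plan.

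One inaccuracy in the setup: you write that the sign of the Legendrian stabilization is ``encoded by the sign of the Dehn twist and, equivalently, by which side of the band $L$ runs over.'' These are not two equivalent descriptions. The open-book stabilization must be a \emph{positive} Hopf plumbing (right-handed Dehn twist along the new core curve); a negative plumbing changes the supported contact structure, so it is not a choice available to you here. The sign of the Legendrian stabilization is determined entirely by which side of the new band $L$ winds over, and in the diagram this shows up precisely as the placement of $w$ relative to the new curves --- which is exactly the mechanism you correctly exploit a few sentences later. Keep the second half of your sentence and drop the first. The remaining work you flag (precise basepoint placement, $n_z=0$ in both cases, uniqueness of the holomorphic disk, preservation of sign and $MCG(Y,L)$ ambiguity) is the right list; filling those in reproduces the proof in \citep{LOSS}.
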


In particular, since both $\mathfrak{L}$ and $\mathfrak{\widehat{L}}$ are unchanged under negative stabilization they are also invariants for transverse knots. Thus for a transverse knot $T$ we define the LOSS invariant $\mathfrak{T}, \mathfrak{\widehat{T}}$ of $T$ to be the LOSS invariant $\mathfrak{L}, \mathfrak{\widehat{L}}$ of a Legendrian approximation of $T$. 

\subsection{Maps induced by surgery}

Let $Y$ be a 3-manifold and $K$ be a framed knot in $Y$ with framing $f$, and denote $Y_f(K)$ to be the 3 manifold obtained from $Y$ by surgery along $K$ with framing $f$. Then there exists a Heegaard triple  $(\Sigma, \alpha, \beta, \gamma, z)$ that is ``compatible'' with (or ``subordinate'' to) the cobordism induced by surgery, in particular $(\Sigma, \alpha, \beta)$ describes $Y$, $(\Sigma, \alpha, \gamma)$ describes $Y_f$, and $(\Sigma, \beta,\gamma)$ is a Heegaard diagram for a connected sum of copies of $S^1 \times S^2$, and furthermore we can explicitly relate the framed knot $(K,f)$ to the Heegaard triple (see \citep{OShi} Section 4 for details). This Heegaard triple gives a well defined map from $HF^-(Y)$ to $HF^-(Y_f(k))$, and a similar construction works for knot Floer homology \citep{OShk}, as we now outline. Assume $L$ is a homologically trivial knot and assume the induced knot $L'$  in $Y_f(K)$ is also homologically trivial. Then there exists a doubly pointed Heegaard triple $(\Sigma, \alpha, \beta, \gamma,w, z)$ describing the surgery cobordism and giving rise to a map 

\begin{equation}
    F_{K(f),\mathfrak{s}}: HFK^-(Y,L) \rightarrow HFK^-(Y_f(K),L')
\end{equation} which is induced by a chain map 

\begin{equation}
    f_{K(f),\mathfrak{s}}: CFK^- (Y,L) \rightarrow CFK^-(Y_f(K),L').
\end{equation} The latter is defined for a compatible doubly pointed Heegaard triple $(\Sigma, \alpha, \beta, \gamma,w, z)$ by the formula 
$$f_{K(f),\mathfrak{s}}(\textbf{x})=\sum_{\{\textbf{y}\in \mathbb{T}_\alpha \cap \mathbb{T}_\gamma\}} \sum_{\psi} \# {\mathfrak{M}}(\psi)\cdot U^{n_w(\phi)} \cdot \textbf{y}$$ 
where $\mathfrak{s}$ is a $Spin^c$ structure on the cobordism. The inner sum is over homotopy classes $\psi\in\pi_2(\textbf{x},\Theta_{\beta,\gamma},\textbf{y})$ of Whitney triangles connecting $\textbf{x}$, $\textbf{y}$, and a representative $\Theta_{\beta,\gamma}$  of the top dimensional class in $HFK^-(\Sigma, \beta, \gamma,w, z)$, and satisfying $s_w(\psi) = \mathfrak{s}$, $n_z(\psi) = 0$, and $\mu(\psi) = 0$, where the latter is the expected dimension of the moduli space $\mathfrak{M}(\psi)$ of holomorphic triangles in homotopy class $\psi$. 

Recall that a Whitney triangle connecting $\textbf{x}\in \mathbb{T}_\alpha \cap \mathbb{T}_\beta$, $\textbf{r}\in \mathbb{T}_\beta \cap \mathbb{T}_\gamma$, and $\textbf{y}\in \mathbb{T}_\alpha \cap \mathbb{T}_\gamma$ is a map $$u: \Delta \rightarrow Sym^n(\Sigma)$$ where $\Delta_i$ is an oriented 2-simplex with vertices $v_\alpha$, $v_\beta$, and $v_\gamma$ labeled clockwise, and $e_\alpha$, $e_\beta$, and $e_\gamma$ are the edges opposite to $v_\alpha$, $v_\beta$, and $v_\gamma$ respectively. Moreover we want the boundary conditions that $u(v_\alpha) = \textbf{r}$, $u(v_\beta) = \textbf{y}$ and $u(v_\gamma) = \textbf{x}$, and $u(e_\alpha) \subset \mathbb{T}_\alpha$,
$u(e_\beta) \subset \mathbb{T}_\beta$ and $u(e_\gamma) \subset \mathbb{T}_\gamma$. In particular if we start at any vertex of $\Delta$ and go clockwise we should encounter the $\alpha$, $\beta$, and $\gamma$ curves in cyclic order. See Figure \ref{Whitney triangle} for a schematic picture. 

\begin{figure}[H]
    \centering
    \includegraphics[width=50mm,scale=0.5]{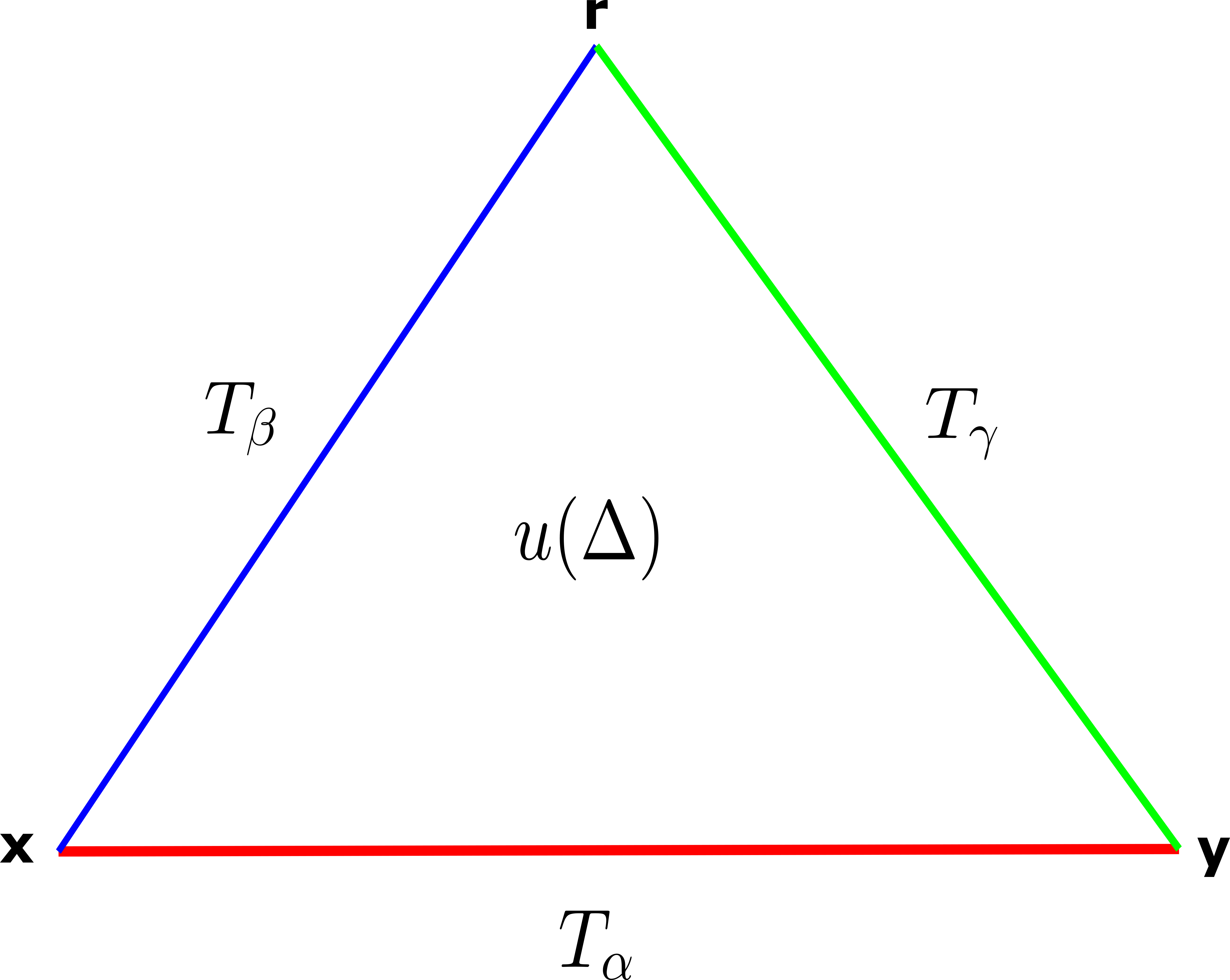}
    \caption{Schematic Whitney triangle for $(\Sigma, \alpha,\beta, \gamma)$
    }
    \label{Whitney triangle}
\end{figure}

\section{Contact surgery and Capping off cobordism} \label{Contact surgery and Capping off cobordism}

In this section we review some basic properties of contact surgery and how it is related to capping off cobordism. 

Given a Legendrian knot $L$ in a contact 3 manifold $(Y,\xi)$ there is a canonical contact framing defined by a vector field along $L$ that is transverse to the 2-plane field $\xi$. In \citep{DGst} Ding and Geiges describe a notion of contact $r$-surgery on $L$ which, for a choice of rational number $r$, gives rise to another contact 3 manifold $(Y_r(L),\xi_r(L))$ 
whose underlying 3-manifold $Y_r(L)$ is obtained by smooth surgery along $L$ with coefficient $r$ relative to the contact framing. (Note that in general there are choices required to completely determine the resulting contact structure for general contact rational $r$-surgery.) In \citep{DGS} Ding, Geiges and Stipsicz prove the following theorem. 

\begin{theorem}
    Every (closed, orientable) contact 3 manifold $(Y,\xi)$ can
be obtained via contact $\pm 1$-surgery on a Legendrian link in $(S^3,\xi_{std})$.
\end{theorem}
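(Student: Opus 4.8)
The theorem to prove is the one quoted at the end of the excerpt, namely the Ding--Geiges--Stipsicz result that every closed orientable contact $3$-manifold $(Y,\xi)$ arises from contact $(\pm1)$-surgery on a Legendrian link in $(S^3,\xi_{std})$. The plan is to start from a known surgery presentation of the underlying smooth manifold and then promote it to a \emph{contact} surgery presentation. First I would invoke the classical fact (Lickorish--Wallace, together with the observation that signs of framings can be arranged) that $Y$ is obtained from $S^3$ by surgery on a framed link $\mathbb{L}$; after a sequence of blow-ups one may assume every surgery coefficient is $\pm1$ relative to the Seifert framing. Next I would use the contact side: any open book compatible with $(Y,\xi)$, or equivalently a contact cell decomposition, gives a description of $(Y,\xi)$ built from $(S^3,\xi_{std})$ by attaching contact $2$-handles along Legendrian curves; the key input is that one can realise the smooth link $\mathbb{L}$ by a Legendrian link $L$ in $(S^3,\xi_{std})$ whose contact framing differs from the desired surgery framing in a controlled way (by adding zigzags/stabilizations one can lower the Thurston--Bennequin invariant at will, so any framing that is at most $tb-1$ on each component is achievable).

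The main technical step is to reconcile the two pictures: given the contact manifold $(Y,\xi)$ presented via a sequence of Legendrian surgeries, one must show each such surgery can be converted into a combination of contact $(+1)$- and $(-1)$-surgeries. Here I would use the surgery-calculus moves of Ding--Geiges: a contact $(-1)$-surgery is a Legendrian (Stein) handle attachment, while a contact $r$-surgery for general $r$ can be replaced by a chain of contact $(\pm1)$-surgeries on Legendrian push-offs, using the continued fraction expansion of $r$ and repeated stabilization to correct framings. Concretely, the step I expect to be the crux is showing that an arbitrary contact surgery (coming from the compatible open book or from the need to cancel extra handles) decomposes as $(\pm1)$-surgeries \emph{without changing the contact structure} — this requires the cancellation lemma that a contact $(+1)$- and a contact $(-1)$-surgery on a Legendrian knot and its push-off annihilate each other, so that superfluous handles introduced during the reduction can be removed.

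With those pieces in hand the argument assembles as follows: (i) present $Y$ smoothly by $\pm1$-framed surgery on a link in $S^3$; (ii) Legendrian-realise that link in $(S^3,\xi_{std})$, stabilizing as needed so that the contact framing exceeds the surgery framing by exactly the right amount on each component; (iii) interpret $+1$ relative-to-contact framings as contact $(+1)$-surgeries and the remaining components as contact $(-1)$-surgeries, possibly after expanding non-$\pm1$ contact coefficients into $(\pm1)$-chains; (iv) check that the resulting contact manifold is $(Y,\xi)$ and not merely a contact structure on $Y$, by comparing with the compatible open book / Stein handle description and using the cancellation lemma to match them. The hardest part is genuinely step (iv): controlling the contact structure (not just the smooth type) through all the stabilizations and handle cancellations, which is where one needs the structural results of \citep{DGst} on how contact surgery interacts with stabilization and push-off.
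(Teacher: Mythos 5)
The theorem you are proving is not proved in this paper at all: it is quoted verbatim from Ding--Geiges--Stipsicz (the paper's \citep{DGS}) and used as a black box, so there is no ``paper's proof'' for your argument to be compared against. I will therefore assess your sketch against the actual argument in the literature.

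Your building blocks are the right ones in outline --- Lickorish--Wallace, Legendrian realization by stabilization, the DGS reduction of a general contact $r$-surgery into a chain of contact $(\pm 1)$-surgeries, and the cancellation lemma --- but two steps as written would not go through. First, step (i), blowing up until every \emph{topological} surgery coefficient is $\pm 1$ relative to the Seifert framing, is counterproductive: contact surgery coefficients are measured relative to the \emph{contact} framing $tb$, and a Seifert-framed $+1$ on, say, an unknot (for which $\max tb = -1$) can never be realized as a contact $(-1)$-surgery, nor indeed with any bounded contact coefficient. The correct move is the opposite: keep arbitrary integer coefficients, stabilize each Legendrian component until $tb$ is smaller than the desired framing, read off the resulting positive contact coefficient, and then apply the DGS continued-fraction reduction (the paper's Theorem~\ref{3.2}) to express it as $(\pm1)$-surgeries. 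Second, and more seriously, step (iv) is genuinely missing an idea. The procedure in (i)--(iii) only produces \emph{some} contact structure $\xi'$ on $Y$, and there is no reason for $\xi'$ to agree with the given $\xi$; comparing with the compatible open book of $(Y,\xi)$ and invoking the cancellation lemma does not bridge this gap, because the two surgery descriptions need not be related by cancelling pairs. The actual proof closes this gap with a separate proposition: any two contact structures on the same closed $3$-manifold are related by contact $(\pm1)$-surgery on a Legendrian link. That proposition is proved by implementing a Lutz twist via contact $(\pm1)$-surgeries to make both structures overtwisted and homotopic as plane fields, and then appealing to Eliashberg's classification of overtwisted contact structures. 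Without that input your argument produces the right smooth manifold but cannot pin down the contact structure, so step (iv) as stated is a genuine hole rather than merely the hardest step.
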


Moreover in \citep
{DGS} they describe an algorithm to transform positive rational $r$-surgery on a Legendrian knot $K$ to a sequence of $\pm 1$-surgeries on some (stabilizations of) push offs of $K$ as follows.

\begin{theorem} [DGS algorithm \citep
{DGS}] {\label{3.2}}
Given a Legendrian knot $L$ in $(Y,\xi)$. Let $0<\frac{x}{y}=r\in \mathbb{Q}$ be a contact surgery coefficient. Let $c\in \mathbb{Z}$ be the minimal positive integer such that $\frac{x}{y-cx}<0$, with the continued fraction \begin{equation}
    \frac{x}{y-cx}=[a_1,a_2,...a_m]=a_1-\frac{1}{a_2-\frac{1}{...-\frac{1}{a_m}}}
\end{equation}
where each $a_i\leq -2$. Then any contact $\frac{x}{y}$ surgery on $L$ can be described as contact surgery along a link $(L_0^1 \cup L_0^2 \cup ... \cup L_0^c)\cup L_1 \cup...\cup L_m$, where 
\begin{itemize}
     \item $L_0^1,...,L_0^c$ are Legendrian push offs of $L$.
    \item $L_1$ is obtained from a Legendrian push off of $L_0^c$ by stabilizing $|a_1+1|$ times.
    \item $L_i$ is obtained from a Legendrian push off of $L_{i-1}$ by stabilizing $|a_i+2|$ times, for $i\geq 2$.
    \item The contact surgery coefficients are $+1$ on each $L_0^j$ and $-1$ on each $L_i$.
\end{itemize}
    
\end{theorem}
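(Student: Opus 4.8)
I would follow the approach of Ding--Geiges--Stipsicz, which is a contact-geometric implementation of the classical Rolfsen-twist and continued-fraction surgery calculus. The two ingredients are: (i) a \emph{contact Rolfsen twist}, exchanging the surgery coefficient on $L$ for a $(+1)$-framed Legendrian push-off linking $L$ once; and (ii) the fact that a negative contact surgery coefficient is canonical and is realized, after expanding it as a negative continued fraction, by a chain of contact $(-1)$-surgeries on suitably stabilized Legendrian push-offs.

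\emph{Step 1 (making the coefficient negative).} Smoothly, $\frac{x}{y}$-surgery on a knot can be traded, by a Rolfsen twist along a $(+1)$-framed meridian, for $(+1)$-surgery on that meridian together with $\frac{x}{y-x}$-surgery on the knot. Contact-geometrically one isotopes the $(+1)$-framed meridian of $L$ to a Legendrian push-off $L_0^1$ of $L$ and performs contact $(+1)$-surgery there, which needs no auxiliary choice; the coefficient on $L$ then becomes $\frac{x}{y-x}$. Iterating $c$ times produces the push-offs $L_0^1, \dots, L_0^c$, each with contact coefficient $+1$, and leaves $L$ with coefficient $\frac{x}{y-cx}$, which by minimality of $c$ is negative.

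\emph{Step 2 (expanding the negative part).} Write $\frac{x}{y-cx} = [a_1, \dots, a_m]$ with each $a_i \le -2$. The corresponding negative surgery is realized smoothly by a linear chain of unknots with integer framings $a_1, \dots, a_m$ hanging off $L_0^c$; one promotes each to a Legendrian $L_i$, obtained from a Legendrian push-off of $L_{i-1}$ (of $L_0^c$ when $i=1$) by enough stabilizations that its contact framing matches the prescribed smooth framing. Since contact $(-1)$-surgery on a Legendrian is smooth surgery one less than the contact framing, and carries no choices, the link $(L_0^1 \cup \dots \cup L_0^c) \cup L_1 \cup \dots \cup L_m$ with the stated coefficients performs exactly contact $\frac{x}{y}$-surgery on $L$. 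Counting stabilizations: $L_1$ links only one other component in the chain, forcing $|a_1+1|$ stabilizations, whereas each later $L_i$ links two neighbors, forcing $|a_i+2|$.

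The main obstacle is the framing/linking bookkeeping: one has to check that each contact Rolfsen twist in Step 1 changes the \emph{contact} framing of every component exactly as the smooth framing changes, and that the stabilization counts in Step 2 place the Thurston--Bennequin invariants of the $L_i$ precisely at $a_1, \dots, a_m$ --- the ``$+1$ versus $+2$'' difference being exactly this linking correction. A secondary point, which makes the statement meaningful, is that every choice of stabilization \emph{type} on the $L_i$ yields \emph{some} contact $\frac{x}{y}$-surgery; fixing the particular structure $\xi_n^-$ used in the rest of the paper just means taking all those stabilizations negative.
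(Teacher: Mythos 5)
This statement is cited in the paper as the DGS algorithm (Theorem 2 of Ding--Geiges--Stipsicz) and is not proved there, so the relevant comparison is with the argument in \cite{DGS}. Your two-step outline (repeatedly peel off contact $(+1)$-surgeries on Legendrian push-offs until the remaining coefficient on $L$ is negative, then realize the negative coefficient by a chain of contact $(-1)$-surgeries on stabilized push-offs, with the arithmetic controlled by the negative continued fraction) is indeed the structure of the DGS proof, and your remark at the end about the role of the stabilization \emph{types} in selecting a particular contact structure such as $\xi_n^-$ is correct and worth having.

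There is, however, a genuine confusion in your Step 2. You describe the $L_i$ as ``a linear chain of unknots with integer framings $a_1,\dots,a_m$ hanging off $L_0^c$'' and then in the same sentence as Legendrian push-offs of $L_{i-1}$. These are two different smooth surgery pictures. The Kirby-calculus ``chain of unknots'' realizing a negative continued fraction consists of meridional circles, each a meridian of its predecessor; a Legendrian push-off of $L_{i-1}$ is a \emph{parallel} copy of it, not a meridian. The DGS construction works entirely in the parallel-copies picture: every $L_0^j$ and every $L_i$ is Legendrian isotopic to a stabilized parallel copy of $L$, and all pairs link with linking number $\mathrm{tb}(L)$ (plus corrections from stabilizations). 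Because of this, your heuristic for the stabilization counts --- ``$L_1$ links only one other component, each later $L_i$ links two neighbors, hence $|a_1+1|$ versus $|a_i+2|$'' --- is an argument about the chain picture and does not directly apply to the parallel-copies picture, where every component links every other. The $|a_1+1|$ versus $|a_i+2|$ discrepancy actually comes from carefully comparing the linking matrix of the parallel Legendrian copies (with contact framings $\mathrm{tb}(L_i)$) to the target smooth coefficient $x/y$, together with the shift produced by the $c$ preceding $(+1)$-surgeries; this is a genuine linking-matrix computation, not a count of adjacent neighbors.

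A smaller imprecision: Step 1 is correct in spirit, but the tool is not really a ``contact Rolfsen twist'' on a meridian isotoped to a push-off. The precise statement used by Ding--Geiges is a replacement/cancellation lemma: contact $(\pm 1)$-surgeries on a Legendrian and a push-off cancel, and contact $r$-surgery on $L$ equals contact $(+1)$-surgery on a push-off $L_0$ together with contact $r'$-surgery on $L$, where $1/r' = 1/r - 1$, i.e.\ $r' = x/(y-x)$. Iterating this is exactly your Step 1, but it is proved by contact-geometric model arguments, not by appealing to a Rolfsen twist on a meridian (which a priori need not be Legendrian isotopic to a push-off in a way compatible with the contact framing). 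So the skeleton of your proof matches DGS, but both steps as written gloss over the framing bookkeeping that is the actual content of the theorem.
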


The choices we mentioned above correspond to the choices of stabilizations for each $L_i$, each of which can be either positive or negative. The case we are interested in is positive integer contact $+n$ surgery, and if we follow the algorithm carefully we can see that $+n$ contact surgery on a Legendrian knot $L$ is the same as doing contact surgery along the link $(L_0^1)\cup L_1 \cup...\cup L_{n-1}$ where $L_0^1$ is the Legendrian push off of $L$, $L_1$ is one stabilization of a Legendrian push off of $L_0^1$, and $L_i$ is a Legendrian push off of $L_{i-1}$ for $i\in\{2,3,...,n-1\}$. In particular there only one choice of stabilization required. The contact structure $\xi_n^-(L)$ we  consider corresponds to choosing the negative stabilization for $L_1$. Below is one reason why we want to consider the one with negative stabilizations. 

\begin{proposition}[\citep{LSct} Proposition 2.4] {\label{3.3}}
    Let $L$ be an oriented Legendrian knot and $L^-$ be negative stabilization of $L$, and let $n>0$ be a positive integer. Then $\xi_n^-(L)=\xi_{n+1}^-(L^-)$
\end{proposition}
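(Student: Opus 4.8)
The plan is to reduce both contact structures to explicit contact $\pm1$-surgery diagrams via the DGS algorithm (Theorem~\ref{3.2}), use that to pin down exactly what must be shown, and then carry out the identification by convex surface theory. By the discussion following Theorem~\ref{3.2}, $\xi_n^-(L)$ is presented by contact surgery on a link $L_0\cup L_1\cup\cdots\cup L_{n-1}$ in $(Y,\xi)$ with $L_0$ a Legendrian push off of $L$ (coefficient $+1$), $L_1$ a push off of $L_0$ negatively stabilized once (coefficient $-1$), and $L_i$ a push off of $L_{i-1}$ (coefficient $-1$) for $2\le i\le n-1$; running the same algorithm on $L^-$ with coefficient $n+1$ (again $c=1$, now with continued fraction $(n+1)/(1-(n+1))=[-2,\dots,-2]$ of length $n$) presents $\xi_{n+1}^-(L^-)$ by contact surgery on an analogous chain $\widetilde L_0\cup\cdots\cup\widetilde L_n$, with $\widetilde L_0$ a push off of $L^-$. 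Two elementary facts sit in the background: a Legendrian push off of $L^-$ is Legendrian isotopic to a negative stabilization of a push off of $L$ (stabilization is local and commutes with taking parallel copies), and, smoothly, $L^-$ is $L$ with its contact framing lowered by one, so the two links determine the same underlying $3$-manifold.

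For the contact structures I would argue by convex surfaces. Choose standard contact neighborhoods $\nu(L^-)\subset\nu(L)\subset Y$ so that the collar $T^2\times I=\overline{\nu(L)\setminus\nu(L^-)}$ is precisely the negative basic slice realizing $\mathrm{tb}(L^-)=\mathrm{tb}(L)-1$ — the convex surface incarnation of ``$L^-$ is a negative stabilization of $L$''. Both operations then remove a standard solid-torus neighborhood of the Legendrian and glue in a tight contact solid torus; since the two contact framings differ by one and so do the two surgery coefficients, these shifts cancel, the glued-in solid tori see the same meridian, and the only real difference between the two constructions is whether the collar $T^2\times I$ is absorbed into the surgery torus. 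Writing $V'$ and $V_L$ for the surgery tori of $\xi_{n+1}^-(L^-)$ and $\xi_n^-(L)$ respectively, it follows that $V'':=(T^2\times I)\cup V'$ and $V_L$ are tight contact solid tori with the same convex boundary $\partial\nu(L)$ (carrying its $\xi$-determined dividing set) and the same meridian. Now the convention ``$\xi_m^-$'' says precisely that the surgery torus is the tight one \emph{all of whose basic slices are negative}; since $V'$ is of this kind and $T^2\times I$ is a negative basic slice, $V''$ too has all of its basic slices negative, so by Honda's classification of tight contact structures on solid tori $V''$ carries exactly the contact structure defining $\xi_n^-(L)$. Hence the two surgered contact manifolds are contactomorphic, which is the claim.

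The step I expect to be the main obstacle is the dictionary between the two meanings of ``negative'': one must know that the choice of negative stabilization in the DGS algorithm — which is what defines $\xi_m^-$ — corresponds exactly to the surgery solid torus being the one all of whose basic slices are negative, and that the collar $T^2\times I$ above really is a negative (not positive) basic slice. Granted this, the argument is short, because every piece in sight is negative and Honda's classification leaves no further choice, so the potentially messy sign-tracking is confined to setting up this dictionary. The same point is what makes a purely diagrammatic proof, carried out entirely inside the DGS pictures by contact handle moves, more delicate: there it reappears as following the single negative stabilization through a cancellation relating two diagrams that have different numbers of components.
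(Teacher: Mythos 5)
The paper does not prove this statement; it is quoted directly from Lisca--Stipsicz (cited as \cite{LSct}), so there is no internal proof to compare against. Your convex-surface argument is a reasonable route to the result and is likely close in spirit to the source.

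That said, the way you have written it, the argument leans entirely on the ``dictionary'' you flag at the end, and that dictionary is not a side remark --- it \emph{is} the content of the proposition. Two points in particular are doing all the work and are currently asserted rather than established. First, you need that for a positive integer contact surgery coefficient $m$, the surgered manifold decomposes as $(Y\setminus\nu(L))\cup V$ with $V$ a \emph{tight} solid torus, and that the DGS sign choices biject with the tight contact structures on $V$ with the induced convex boundary; for $m\ge 2$ there are exactly two, and $\xi_m^-$ is the one whose basic slices are all negative. This is true (it is essentially Honda's classification of tight contact structures on $S^1\times D^2$ recast in the DGS language), but it must be invoked with the correct slope bookkeeping, and it is not a formality. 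Second, in passing from $V'$ to $V''=(T^2\times I)\cup V'$ you implicitly appeal to the fact that the sign sequence of $V''$ is obtained by prepending the sign of the collar to the sign sequence of $V'$ in a way compatible with the continued-fraction block structure, and that an all-negative sequence stays tight under this operation; Honda's shuffling relations make this nontrivial in general, and one should check that the edge of the relevant continued-fraction block lands where you want it. Both points are standard but are precisely where a careless version of this argument could silently go wrong.

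If you want a proof that avoids the convex-surface dictionary altogether, the alternative is the purely diagrammatic one you gesture at: write out the two DGS links (lengths $n$ and $n+1$), observe that the smooth framings match because $\mathrm{tb}(L^-)+(n+1)=\mathrm{tb}(L)+n$, and then reduce the longer chain to the shorter by a contact handle cancellation, tracking the single negative stabilization through the slide. This trades the basic-slice bookkeeping for Kirby-calculus bookkeeping; neither is shorter, but the diagrammatic version sidesteps the need to independently identify $\xi_m^-$ with a particular tight solid torus. Either way, the heart of the matter is exactly the sign-tracking you identify, so your honest acknowledgement of the gap is well placed; as written, the proposal is a correct outline but not yet a complete proof.
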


Next we are going to see what happens when we take the $+n$ contact surgery on a Legendrian knot $L$ that is parallel to a binding of a compatible open book. Again we start with a Legendrian knot $L$ in a contact 3 manifold $(Y,\xi)$, and choose an abstract open book $(P,\phi)$ that is compatible with $(Y,\xi)$ containing $L$ on the page and has page framing equals to contact framing. We can get an open book that is compatible with $(Y_n(L), \xi_n^-(L))$ in the following way. First observe that by replacing $L$ and $(P,\phi)$ by a stabilization if necessary (as in \citep[Lemma 6.5]{BEHccm} or in \citep[Section 3]{MTn}), we can assume that $L$ is parallel to a boundary component of $P$ (by Proposition \ref{3.3} this does not lose generality). 

We then stabilize the open book again such that $L^-$ also lies on the page, and again by the observation in \citep[Lemma 6.5]{BEHccm} we may further assume $L^-$ is parallel to a binding component $B$ (though $L$ is no longer necessarily boundary parallel; see figure \ref{Contact surgery} for the description of the stabilization). 

We denote the stabilized open book $(P',k \circ \phi)$, where here and below we will use the same symbol for a simple closed curve on the page and the right-handed Dehn twist along that curve. By Theorem \ref{3.2} and the correspondence between surgery and Dehn twists, we conclude that $(P',\phi')=(P',(L^-)^{n-1} \circ (L)^{-1} \circ k \circ \phi)$ is an open book compatible with $(Y_n(L), \xi_n^-(L))$. 


\begin{figure}
    \centering    \includegraphics[width=\textwidth]{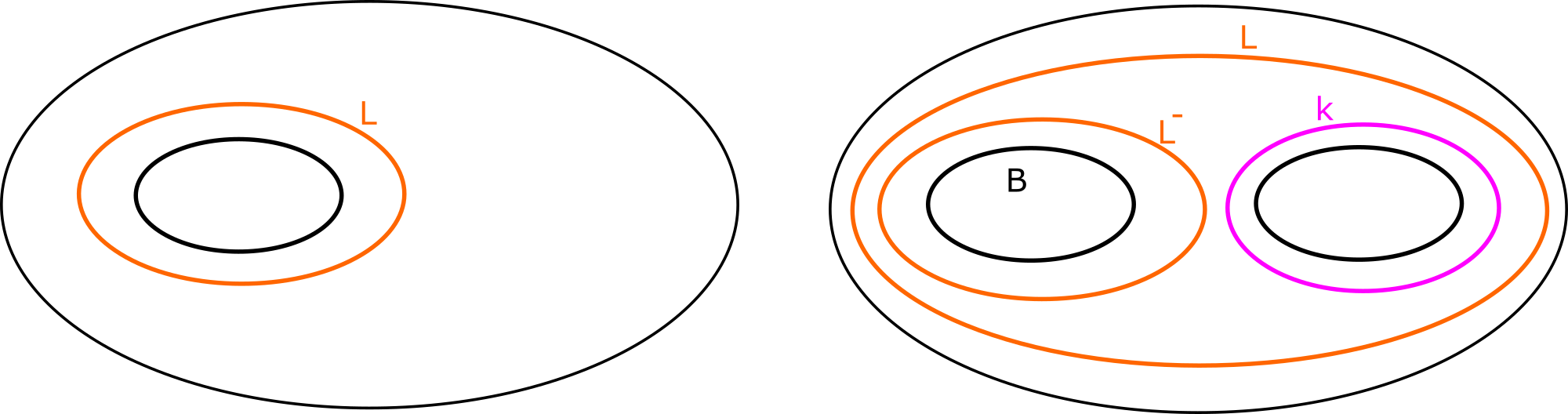}
    \caption{The left diagram describes the open book with Legendrian $L$ lie on it and parallel to some binding, and the right one is the stabilization of the left one (we do right hand twist along $k$), where $L^-$ now is parallel to some binding $B$. Then after we do $n-1$ right hand twists along $L^-$ and $1$ left hand twist along $L$ we obtain an open book $(P',\phi')$ for $(Y_n(L), \xi_n^-(L))$.
    }
    \label{Contact surgery}
\end{figure}

Now we are able to describe the important theorem that relates contact $+n$ surgery and capping off cobordism. 

\begin{theorem}[\citep{LSct} Proposition 4.1] {\label{3.4}}
In the above setting let $B_L$ be the binding of $(P',\phi')$ that corresponds to $B$ in $(P',k \circ \phi)$. Then 
\begin{itemize}
    \item Capping off $B_L$ gives us back $(P,\phi)$
    \item Let $X: Y_n(L)\to Y$ be the cobordism corresponding to capping off $B_L$, and let $W_{L,n}: Y\to Y_n(L)$ be the topological cobordism obtained by attaching a $4$-dimensional $2-handle$ along $L$ with framing $tb(L)+n$. Then, $X= -W_{L,n}$, i.e. $X$ is
obtained from $W_{L,n}$ by viewing it upside–down and reversing its orientation.

\end{itemize}
\end{theorem}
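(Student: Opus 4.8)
The plan is to prove Theorem \ref{3.4} by carefully tracking the handle-theoretic and open-book data through the construction described above, essentially making explicit the duality between attaching a $2$-handle and capping off a binding. First I would recall the local model for capping off: if $(P', \phi')$ has a binding component $B_L$ parallel to the curve $L^-$ on the page, and $\phi' = (L^-)^{n-1}\circ (L)^{-1}\circ k\circ\phi$, then gluing a disk along $B_L$ and extending the monodromy by the identity kills exactly the Dehn twists supported in a neighborhood of that boundary component that become trivial once the boundary is capped. The key observation is that the curves $L^-$ and $L$ are isotopic on the capped-off page (since $L^-$ is a stabilization of $L$ and the extra zig-zag can be pushed across the new disk), so $(L^-)^{n-1}\circ (L)^{-1}$ becomes isotopic to a single power $(L)^{n-2}$... more precisely one must check that after capping, the composite monodromy $(L^-)^{n-1}\circ (L)^{-1}\circ k$ is isotopic rel boundary to the identity, recovering $(P,\phi)$. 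This is where the stabilization by $k$ in the previous paragraph is used: $k$ was chosen precisely so that destabilizing (which is the inverse operation to the stabilization that introduced $B$) undoes the $k$-twist, and the remaining twist powers along the now-isotopic curves $L^-$ and $L$ cancel because on the capped page the page framing relation forces the net count to be $tb(L)+n$ surgery, i.e.\ trivial relative to the original open book. I would cite \citep[Lemma 6.5]{BEHccm} for the stabilization/destabilization bookkeeping and \citep{LSct} for the precise curve isotopies.

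For the second bullet, the plan is to identify the cobordism $X$ induced by capping off with the reverse of the $2$-handle cobordism $W_{L,n}$. Capping off a binding component corresponds to a Lefschetz fibration over the disk whose total space is $(P'\cup D^2)\times D^2$ glued appropriately; concretely, attaching a disk to a page along $B_L$ and extending $\phi'$ by the identity is realized by a single round $1$-handle / $2$-handle attachment, and the standard fact (going back to the relationship between open books, Lefschetz fibrations, and surgery diagrams, cf.\ \citep{LSct} Proposition 4.1 and the references therein) is that this is the trace of a $2$-handle attachment along the binding circle with the page framing. Reversing orientation and turning the cobordism upside down then presents it as a $2$-handle attached along the dual knot, which is precisely $L\subset Y$ with framing $tb(L)+n$: the integer $n$ enters through the $n-1$ right-handed twists along $L^-$ minus the one left-handed twist along $L$, whose algebraic contribution to the framing is $n$ relative to the contact (= page) framing. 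So the upside-down capping cobordism $X$ attaches a $2$-handle along $L$ with framing $tb(L)+n$, which is exactly $W_{L,n}$, giving $X = -W_{L,n}$.

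The main obstacle I anticipate is the careful verification that the monodromy on the capped-off page is genuinely the identity (rel boundary), not merely homologically trivial or trivial up to some ambiguity. Dehn twist cancellation on a surface with boundary is delicate: $(L^-)^{n-1}\circ (L)^{-1}$ being isotopic to a power of a single twist requires that $L^-$ and $L$ cobound an annulus disjoint from the rest of the relevant curves \emph{after} capping, and that the $k$-twist destabilizes cleanly. I would handle this by drawing the explicit local picture (as in Figure \ref{Contact surgery}) and arguing in a neighborhood of $B\cup B_L$, reducing to a genus-zero sub-surface computation where the braid/mapping-class relations are classical. A secondary, more bookkeeping-type obstacle is matching orientation conventions so that the sign in $X = -W_{L,n}$ (orientation reversal together with turning upside down) comes out correctly; this I would pin down by comparing $Spin^c$ or first-Chern-class data at the level of the induced maps, or simply by invoking the cited result \citep[Proposition 4.1]{LSct} once the underlying smooth picture is established.
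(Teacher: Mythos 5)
This statement is quoted in the paper directly from Lisca--Stipsicz \citep[Proposition 4.1]{LSct} without an in-text proof, so there is no argument of the paper to compare yours against; I will evaluate your sketch on its own terms.

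Your second bullet (identifying the capping-off cobordism with the reversed and upside-down $2$-handle cobordism) follows the right outline: a capping-off is realized by attaching a $4$-dimensional $2$-handle along the binding circle with the page framing, and turning that handle upside down yields a $2$-handle attached along the surgery-dual knot, which one then identifies with $L$ carrying the framing $tb(L)+n$. That part is hand-wavy but the plan is sound.

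The first bullet, however, contains a genuine gap, and it is in exactly the step you flag as ``the key observation.'' You assert that after capping off $B$ the curves $L^-$ and $L$ become isotopic, and therefore $(L^-)^{n-1}\circ(L)^{-1}$ collapses to $(L)^{n-2}$. But $L^-$ is, by construction, \emph{parallel to the capped binding $B$}; once the disk is glued in, $L^-$ bounds a disk and is therefore null-homotopic, not isotopic to $L$. (Recall also that the paper explicitly notes that $L$ itself is \emph{not} boundary parallel in $P'$, so $L$ does not become trivial after capping -- the two curves really do land in different isotopy classes.) Your proposed conclusion also cannot be right for a bookkeeping reason: you would need $(L)^{n-2}\circ k$ to be isotopic to the identity, which forces $k$ to be the $(2-n)$-th power of the Dehn twist along $L$; but $k$ is the stabilizing curve, chosen before $n$ ever enters, so it cannot depend on $n$. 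The correct mechanism is different: after capping $B$, the twist along $L^-$ becomes \emph{trivial} (this kills all $n-1$ positive twists at once, which is exactly why the result is independent of $n$), and the stabilizing curve $k$ becomes isotopic to $L$, so that $(L)^{-1}\circ k$ cancels. Together these give $(L^-)^{n-1}\circ(L)^{-1}\circ k\circ\phi\to \phi$. You would need to justify that $k$ becomes isotopic to $L$ by arguing in the local model of the stabilization (a genus-zero neighborhood of the $1$-handle and of $B$), and the argument you have sketched does not do this. As written the proof would not go through.
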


\section{Proof of Theorems \ref{th 1.1} and \ref{capoffthm}}
As we have seen in section 3, the contact $+n$ surgery is actually a special case of capping off, so instead of directly proving the $+n$ situation we will first prove the naturality result for the LOSS invariant under capping off cobordisms. In order to precisely state the proposition we need to first introduce a new definition.  

\begin{definition}
    Let $(M,\xi)$ be a contact 3-manifold, and $(P, \phi)$ an open book decomposition with at least 2 binding components supporting $(M,\xi)$. Consider $L$ a null-homologous oriented Legendrian knot, $B$ a binding component of $(P, \phi)$, and $\bf\{a_i\}$ a basis for $P$. We say a triple $(L,B,\bf\{a_i\})$ is adapted to $(P,\phi)$ if the following hold. 
\begin{enumerate}
    \item $L$ is sitting on the page $P$ and is parallel to some binding component $T$ other than $B$
    \item Up to reordering $\bf\{a_i\}$, $L$ intersects $a_1$ transversely at one point and does not intersect other $a_i$ for $i\neq 1$ 
    \item Up to reordering $\bf\{a_i\}$, $B$ intersects $a_2$ at exactly one point and does not intersect other $a_i$ for $i\neq 2$.
\end{enumerate}

We say $(L,B)$ is adapted to $(P,\phi)$ if condition $1$  holds. 
See Figure
\ref{adapted triple} for an example of an adapted triple. It's easy to see that if $(L,B)$ is adapted to $(P,\phi)$ then (maybe after further stabilization of the open book) we can always find basis $\bf\{a_i\}$ such that $(L,B,\bf\{a_i\})$ is adapted to $(P,\phi)$ 

\end{definition}

\begin{figure}[H]
    \centering
    \includegraphics[width=\textwidth]{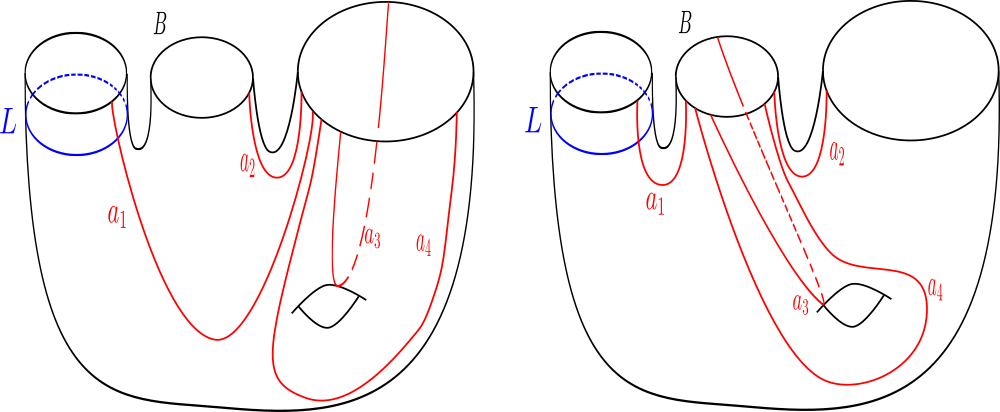}
    \caption{The left diagram is an adapted triple $(L,B,\bf\{a_i\})$ and the right one is not; one can transform the $\{a_i\}$ from one to the other by arcslides. 
    }
    \label{adapted triple}
\end{figure}

Similar to what we saw in section 2.2, given an adapted $(L,B,\bf\{a_i\})$ we can associate a doubly pointed Heegaard triple $(\Sigma,\alpha,\gamma,\beta,z,w)$ as follows. 

Let $\Sigma$ be the Heegaard surface that is the union of two pages $P_{+1} \cup -P_{-1}$, and consider the basis $\bf\{a_i\}$ as lying on $P_{+1}$. Let $c_i$  be a push off of $a_i$ for all $i$, and for all $i\neq 2$ let $b_i$  be a further push off of $c_i$. When $i = 2$ we let $b_2$ be a parallel push of the binding component $B$ on the page $P_{+1}$. We require that the push offs satisfy that each $a_i$, $b_i$, and $c_i$ intersect transversely at one point for all $i$ (as before, we arrange that the endpoints of the push off slide in the direction of the induced orientation of the boundary of $P_{+1}$). In particular, for all $i$, there is a ``small triangle'' formed by the arcs $a_i, c_i, b_i$, see Figure \ref{Double pointed Triple}.  

For the $\alpha$ and $\gamma$ curves in the Heegaard diagram we let $\alpha_i=a_i \cup \overline{a_i}$ and $\gamma_i=c_i\cup \overline{\phi(c_i)}$ for all $i$. For the $\beta$ curves let $\beta_i=b_i\cup \overline{\phi(b_i)}$ for $i\neq 2$, and $\beta_2=b_2$. Finally we place the base points $w,z$ such that they specify the Legendrian knot $L$ the same way as we define for LOSS invariant. 

Thus $(\Sigma, \alpha,\gamma, z,w)$ is a diagram for $(M, L)$, while $(\Sigma, \alpha, \beta, z, w)$ describes the induced knot $L'$ lying in the contact manifold $(M',\xi')$ obtained by capping off the binding component $B$ (this is clear after destabilizing the diagram using the single intersection between $\alpha_2$ and $\beta_2$). Furthermore, one can see (as in \citep{Bc}) that $(\Sigma,\alpha,\gamma,\beta,z,w)$ describes the capping off cobordism map from $M$ to $M'$ and send $L$ to $L'$. When we turn the cobordism upside down $(\Sigma,\bf{\gamma},\bf{\beta},\bf{\alpha})$ describes the cobordism map from $-M'$ to $-M$ (As in \citep{Bc},\citep{MTn}, $(\Sigma,\bf{\gamma},\bf{\beta},\alpha)$ is left-subordinate to this cobordism; see \citep{OShi} sections 4 and 5). After verifying admissibility conditions, this means that the doubly pointed Heegaard triple $(\Sigma,\gamma,\beta,\alpha,w,z)$ (Figure \ref{Double pointed Triple}) can be used to calculate the map \begin{equation}
    F_{B,\mathfrak{s}}: HFK^- (-M',L') \rightarrow HFK^-(-M,L)
\end{equation}
Let $ \bf{x}$ $=\{x_1,x_2,...x_g\}$, $ \bf{\Theta}$ $=\{\theta_1,\theta_2,...\theta_g\}$ and $ \bf{y}$ $=\{y_1,y_2,...y_g\}$ where $x_i=a_i\cap b_i$, $\theta_i=b_i\cap c_i$, and $y_i=a_i\cap c_i$ on $P_{+1}$. If we denote by $\Delta_i$ the small triangle connecting $x_i,\theta_i,y_i$ then the $Spin^c$ structure $\mathfrak{s}$ we wish to use in equation (4.1) is described by the Whitney triangle $\psi \in \pi_2(\bf{x,\Theta,y})$ (i.e. $\mathfrak{s}_z(\psi)=\mathfrak{s}$) where the domain $\mathbb{D}(\psi)$ is the sum of all $\Delta_i$ (\citep{OSht} Proposition 8.4). 
Then we have the following key proposition.

\begin{figure}
    \centering
    \includegraphics[width=\textwidth]{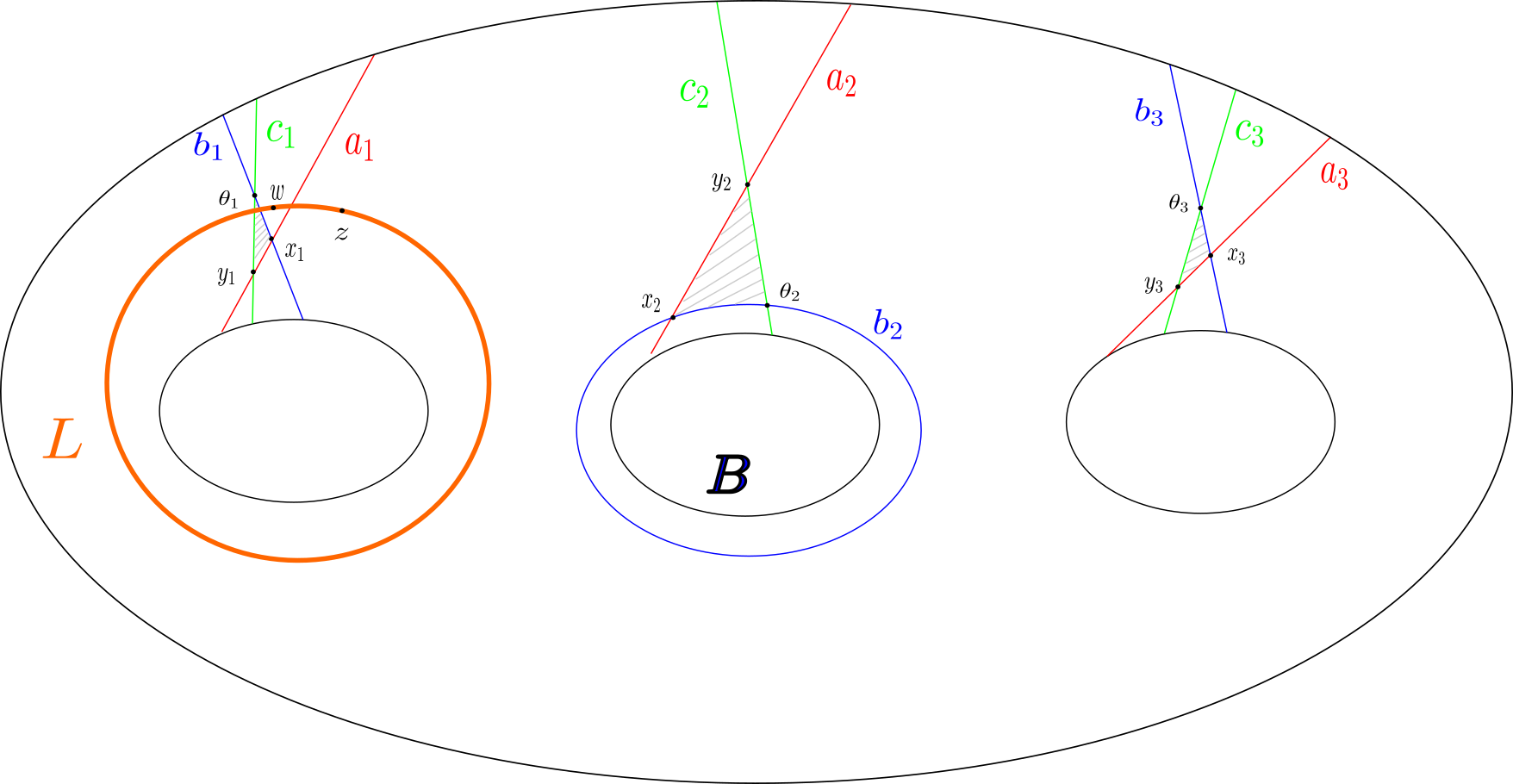}
    \caption{Since all of what we care are on the $P_{+1}$ page, we can just draw things on $P_{+1}$ to capture all the information instead of drawing the whole doubly pointed Heegaard triple. (The black circles are binding, red curves are $a_i$ (parts of the $\alpha_i$), blue curves are $b_i$ (parts of the $\beta_i$), green curves are $c_i$ (parts of the $\gamma_i$). The $Spin^c$ structure $\mathfrak{s}$ is represented by the small shaded triangle. There might be genus but it's not shown on the picture.) 
    }
    \label{Double pointed Triple}
\end{figure}

\begin{proposition} {\label{proposition 4.2}}
 Given adapted $(L,B,\bf\{a_i\})$ and map $F_{B,\mathfrak{s}}$ in the above setting. If we further assume $M'$ is a rational homology sphere and $L'$ is null-homologous in $M'$ then we have
 \begin{equation}
    F_{B,\mathfrak{s}}(\mathfrak{L}(M',\xi',L'))=\mathfrak{L}(M,\xi,L)
\end{equation}

\end{proposition}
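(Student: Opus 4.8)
The plan is to compute both sides of the desired identity directly on the doubly pointed Heegaard triple $(\Sigma,\gamma,\beta,\alpha,w,z)$ described above, using the explicit generators $\mathbf{x}$, $\mathbf{\Theta}$, $\mathbf{y}$ built from the small triangles $\Delta_i$. By the construction of the LOSS invariant reviewed in Section 2.2, the cycle $\mathbf{y}=\{y_i\}$ with $y_i=a_i\cap c_i$ represents $\mathfrak{L}(M,\xi,L)$ in $CFK^-(-M,L)$ (this is exactly the LOSS generator for the open book $(P,\phi)$ with basis $\{a_i\}$, since $(\Sigma,\alpha,\gamma,z,w)$ is the LOSS diagram for $(-M,L)$). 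Likewise, after destabilizing the diagram $(\Sigma,\alpha,\beta,z,w)$ along the single intersection point $x_2=a_2\cap b_2$ (equivalently $\alpha_2\cap\beta_2$), the generator $\mathbf{x}=\{x_i\}$ represents $\mathfrak{L}(M',\xi',L')$ in $CFK^-(-M',L')$. So the content is to show that the triangle map $f_{B,\mathfrak{s}}$ sends $\mathbf{x}$ to $\mathbf{y}$ (plus, a priori, higher-order terms).

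First I would identify the top generator $\mathbf{\Theta}=\{\theta_i\}$, $\theta_i=b_i\cap c_i$, as a cycle representing the top-graded class $\Theta_{\beta,\gamma}\in HFK^-(\Sigma,\beta,\gamma,w,z)$ (for $i\ne 2$ the pair $(\beta_i,\gamma_i)$ is a standard $S^1\times S^2$ summand, and for $i=2$ a quick local check on the page $P_{+1}$ with the arc $b_2$ parallel to $B$ and $c_2$ the push-off of $a_2$; here one uses that $z$ was placed outside all the thin strips). Next, the union of small triangles $\mathbb{D}(\psi)=\sum_i\Delta_i$ is a positive domain of Maslov index $0$ contributing $+1$ to the count (the small triangles are holomorphic for a suitable almost complex structure, as in the usual open-book arguments of Honda--Kazez--Mati\'c and Baldwin), with $n_w(\psi)=n_z(\psi)=0$, so $\psi$ contributes $\mathbf{y}$ with coefficient $1$ and the associated $Spin^c$ structure is precisely the $\mathfrak{s}$ singled out before the proposition. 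The main work is then to rule out, or otherwise account for, every other holomorphic triangle $\psi'\in\pi_2(\mathbf{x},\mathbf{\Theta},\mathbf{y}')$ with $\mu(\psi')=0$, $n_z(\psi')=0$ and $\mathfrak{s}_w(\psi')=\mathfrak{s}$. For this I would run the standard positivity/area argument: decompose $\mathbb{D}(\psi')=\mathbb{D}(\psi)+P$ with $P$ a periodic domain for the triple; using $n_z=0$, the placement of $z$ outside the strips, and the hypothesis that $M'$ is a rational homology sphere (which constrains the $\gamma\beta$- and $\alpha\beta$-periodic domains), one shows $P$ must be a linear combination forced to vanish, or that any nonzero $P$ violates positivity or changes the Alexander/Maslov grading. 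In particular the rational-homology-sphere hypothesis on $M'$ is what kills the $\gamma,\beta$-periodic domains that would otherwise allow competing triangles; this mirrors the role it plays in the contact-class naturality proofs of \citep{Bc} and \citep{MTn}.

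The remaining point is bookkeeping with gradings and $Spin^c$ refinements: since $L'$ is null-homologous in $M'$ and $L$ is null-homologous in $M$, both LOSS invariants live in well-defined Alexander gradings, and the triangle map respects these; one checks that $\mathbf{y}$ lies in the correct Alexander grading relative to $\mathbf{x}$ so that no extra terms can appear in the other gradings, and that the $U$-power $U^{n_w(\psi)}=U^0$ is as claimed so the identity holds on the nose in $HFK^-$ (not merely up to a power of $U$). The $\widehat{HFK}$ statement follows by setting $U=0$ and repeating verbatim. The step I expect to be the genuine obstacle is the second one above: controlling \emph{all} index-zero holomorphic triangles in the prescribed $Spin^c$ class, i.e.\ proving that the small-triangle sum is the unique contribution. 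This is where the argument must carefully exploit the geometry of the page $P_{+1}$ near the binding $B$ (the special role of $\beta_2=b_2$ and the single intersection $\alpha_2\cap\beta_2$), the location of the basepoints $w$ and $z$, and the rational-homology-sphere hypothesis; the analogous step is the technical heart of \citep{Bc} and \citep{MTn}, and I would model the argument closely on theirs, adapting it to keep track of the extra basepoint $z$ and the Alexander grading.
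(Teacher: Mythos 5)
Your overall strategy---compute the triangle map on the explicit doubly pointed Heegaard triple, identify $\mathbf{x}$ and $\mathbf{y}$ with the LOSS classes, show the small-triangle domain $\sum_i\Delta_i$ contributes, and rule out all other contributing triangles---matches the paper's proof, which splits the argument into two lemmas: weak admissibility of $(\Sigma,\gamma,\beta,\alpha,w,z)$ (Lemma~\ref{Lemma 4.3}) and uniqueness of the small triangle among positive, index-zero, $n_z=0$ triangles in the prescribed $\mathrm{Spin}^c$ class (Lemma~\ref{Lemma 4.4}).

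The gap is in the way you propose to carry out the uniqueness step. You write that you would decompose $\mathbb{D}(\psi')=\mathbb{D}(\psi)+P$ with $P$ a \emph{periodic} domain for the triple. That decomposition is only available when $\psi'$ has the same $\alpha\gamma$-vertex $\mathbf{y}$; a priori the competing triangle $\psi'$ may terminate at some other $\mathbf{y}'\in\mathbb{T}_\alpha\cap\mathbb{T}_\gamma$, in which case $\mathbb{D}(\psi')-\mathbb{D}(\psi)$ is not a triply periodic domain but rather a sum $\mathbb{D}(\phi_1)+\mathbb{D}(\phi_2)+\mathbb{D}(\phi_3)$ with $\phi_3\in\pi_2(\mathbf{y},\mathbf{y}')$ a genuine Whitney disk class, not a periodic domain. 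The paper handles this first by a local multiplicity analysis around each $\Delta_i$ for $i\neq 2$: $n_z=0$, positivity, and the corner constraints at $x_i$, $\theta_i$ force the domain to coincide with $\Delta_i$ near those triangles, hence $y_i'=y_i$ for $i\neq 2$. Only afterward does it invoke the $\pi_2$-decomposition for what remains at $\Delta_2$, killing $\mathbb{D}(\phi_1)$ using that $-M'$ is a rational homology sphere (this constrains $\pi_2(\mathbf{x},\mathbf{x})$, i.e.\ the $\beta\alpha$-periodic domains, not the $\gamma\beta$- and $\alpha\beta$-periodic domains you cite), and killing $\mathbb{D}(\phi_2)$ by the separate observation that $\beta_2$ is homologically independent from the other $\beta_i$ and all $\gamma_j$, so it cannot appear as a boundary of a $\beta\gamma$-periodic domain by itself. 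Your proposal neither proves $\mathbf{y}'=\mathbf{y}$ nor supplies a substitute for the $\beta_2$-independence argument; to close the gap you need the local positivity analysis, or an equivalent argument that explicitly treats the class $\phi_3$ and the case $\mathbf{y}'\neq\mathbf{y}$.
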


We remark that by the choices we made in the push offs of $a_i$, 
the intersections $x_i$, $y_i$, and $\theta_i$ appear in  clockwise order around $\Delta_i$ for each $i$.

To prove the above Proposition we divide it into several lemmas. 

\begin{lemma} (cf. \citep[Lemma 2.2]{Bc})\label{Lemma 4.3}
    The doubly pointed diagram $(\Sigma,\gamma,\beta,\alpha,w,z)$ is weakly admissible, in the sense that any non-trivial triply-periodic domain has both positive and negative multiplicities.
\end{lemma}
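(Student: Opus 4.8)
The plan is to establish weak admissibility by a direct analysis of periodic domains, following the strategy of \citep[Lemma 2.2]{Bc} but adapted to our doubly pointed triple $(\Sigma,\gamma,\beta,\alpha,w,z)$ coming from the open book picture. First I would recall that a triply-periodic domain $\mathcal{P}$ is a 2-chain on $\Sigma$ whose boundary is a sum of full $\alpha$, $\beta$, and $\gamma$ curves, and that weak admissibility means every nontrivial such $\mathcal{P}$ has multiplicities of both signs. Because the base point $z$ lies on $P_{+1}$ outside all the thin strips, it suffices to show that any periodic domain avoiding $z$ (and $w$, but the argument will not even need this) that has only nonnegative multiplicities must be trivial.

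Next I would set up coordinates adapted to the open book decomposition of $\Sigma = P_{+1}\cup -P_{-1}$. The key structural fact is that each $\alpha_i$, $\beta_i$ (for $i\neq 2$), $\gamma_i$ is built from an arc on $P_{+1}$ and an arc on $-P_{-1}$ obtained by the identity or the monodromy $\phi$, while $\beta_2 = b_2$ lies entirely on $P_{+1}$ and is a parallel copy of the binding circle $B$. I would use the standard fact (as in \citep{LOSS}, \citep{Bc}, \citep{MTn}) that for the open-book Heegaard diagrams the only periodic domains are controlled by the pages: the boundary of a periodic domain, restricted to $P_{+1}$ and to $-P_{-1}$, forces the domain to be a combination of full pages $P_{+1}$, $-P_{-1}$, plus small regions near the cut system. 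Concretely, I would argue that if $\mathcal{P}$ has $n_z(\mathcal{P})=0$, then comparing multiplicities on the two sides of each $\alpha$ and $\gamma$ curve near the binding forces the coefficient of each page to be zero; then the residual domain is supported in a neighborhood of the arcs $a_i, b_i, c_i$, where one checks region by region (using that $x_i,y_i,\theta_i$ appear in clockwise order around the small triangle $\Delta_i$) that nonnegativity of all multiplicities forces all coefficients to vanish. The only place requiring separate care is the index $i=2$, where $\beta_2$ is a copy of $B$ rather than a push-off of $c_2$; here I would note that capping off $B$ corresponds to destabilization along the single intersection point of $\alpha_2$ and $\beta_2$, so periodic domains involving $\beta_2$ are pinned down by that destabilization and contribute nothing new.

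The main obstacle I expect is the bookkeeping near the binding component $B$ and the interplay between the $\beta_2=b_2$ curve and the rest of the diagram: unlike in the LOSS setup of \citep{LOSS}, the triple here is genuinely a triangle diagram with a ``mismatched'' curve, so I cannot simply quote the admissibility of the standard open-book diagram. I would handle this by first treating the sub-diagram on the curves with $i\neq 2$ exactly as in \citep{Bc}, \citep{MTn} to conclude those coefficients vanish, and then isolating the $i=2$ contribution to a planar region where nonnegativity is visibly incompatible with having a nonzero page coefficient or a nonzero coefficient on either side of $\beta_2$. A secondary subtlety is that we only claim \emph{weak} admissibility, which is all that is needed to define the holomorphic triangle count $F_{B,\mathfrak{s}}$ restricted to a single $Spin^c$ structure; so I do not need to rule out periodic domains with $n_z=0$ but $n_w\neq 0$ beyond the sign condition, which keeps the argument tractable. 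I would close by remarking that the same analysis, read in the other orientation, gives admissibility of $(\Sigma,\alpha,\gamma,\beta,z,w)$, so all the triangle maps used in the proof of Proposition \ref{proposition 4.2} are well defined.
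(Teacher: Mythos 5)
Your high-level strategy matches the paper's: analyze the local multiplicities near each small triangle $\Delta_i$, use $n_z = 0$, the condition that $\partial Q$ consists of full circles, and the sign hypothesis to force everything to vanish. But two steps diverge from the paper and, as written, leave gaps.

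First, the opening claim that any triply-periodic domain decomposes as ``a combination of full pages $P_{+1}$, $-P_{-1}$, plus small regions near the cut system'' is neither cited nor proved, and it is not needed. The paper's proof is entirely local: near each $\Delta_i$ with $i\neq 2$, one labels the six regions $A,\dots,F$ by integers $a,\dots,f$, writes down the two linear constraints $b-c=d-e=c-f$ and $c-a=d-b=e-c$ coming from $\partial Q$ being full $\alpha$ (resp.\ $\beta$) circles, sets $c=0$ from $n_z=0$, and observes directly that a one-signed solution must be trivial. No decomposition of $Q$ into pages is invoked, and it is unclear how one would justify that decomposition for a general triply-periodic domain of a triple diagram.

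Second, and more seriously, the treatment of $i=2$ via ``destabilization pinning down periodic domains involving $\beta_2$'' does not go through as stated. Destabilizing $(\Sigma,\alpha,\beta)$ at the unique point of $\alpha_2\cap\beta_2$ identifies that sub-diagram with a diagram for $M'$, but it says nothing directly about triply-periodic domains of the triple $(\Sigma,\gamma,\beta,\alpha)$ whose boundary involves $\beta_2$ together with $\gamma$ curves. The paper's actual argument is different: after the same local analysis near $\Delta_2$ (using the boundary constraints, $c=0$, and nonnegativity) the only surviving possibility is that $\partial Q$ is a nonzero multiple of $\beta_2$ alone. This is then ruled out because $\beta_2$ is a parallel copy of the binding component $B$, which, since the open book has $r>1$ boundary components, is non-separating in $\Sigma$; hence $\beta_2$ is not null-homologous in $\Sigma$ and cannot by itself bound a $2$-chain. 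That homological observation is the key point your proposal is missing. Replacing the destabilization appeal with this fact would complete the argument.
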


Observe that if we ignore the $w$ base point then the local picture near $\Delta_i$ are all the same except for $i=2$. (See figure \ref{Local Triangles}, for local description of $\Delta_i$ and $\Delta_2$.) 

\begin{figure}
    \centering
    \includegraphics[width=\textwidth]{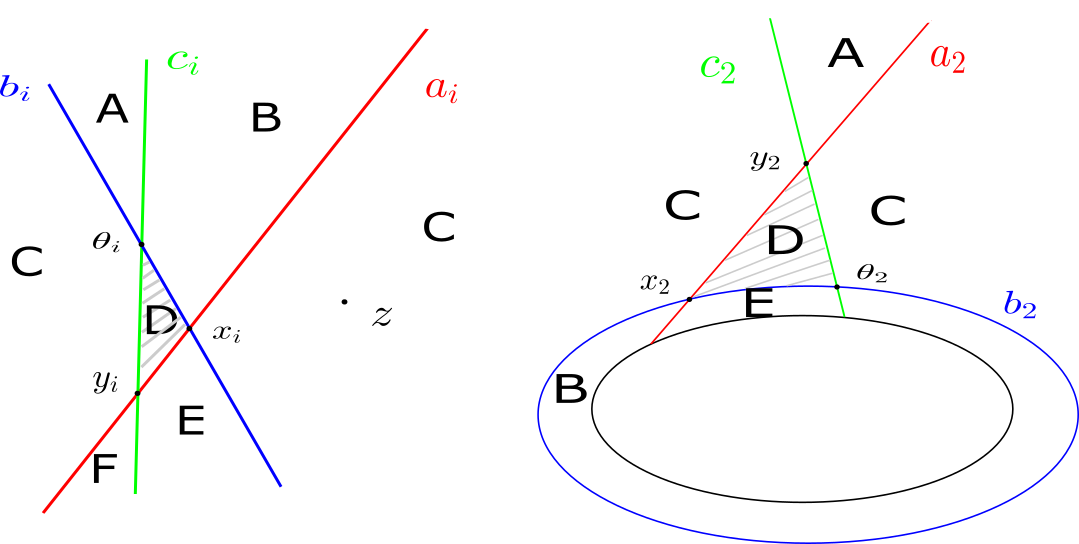}
    \caption{On the left is the local picture for $\Delta_i$ for $i\neq 2$, and on the right is the local picture for $\Delta_2$. A,B,C,D,E,F are the letters used to label the regions in local picture.
    }
    \label{Local Triangles}
\end{figure}

\begin{proof} 
 We first analyze the local picture for $\Delta_i$ where $i\neq 2$. Let $Q$ be a triply-periodic domain whose multiplicities in the regions $A$, $B$,
$C$, $D$, $E$ and $F$ are given by the integers $a$, $b$, $c$, $d$, $e$ and $f$, respectively. Since $\pa Q$ (rather, the portion of $\partial Q$ lying on the $\alpha$ circles) consists of full $\alpha$ arcs, we must have $$b-c=d-e=c-f.$$ Similarly $\pa Q$ also contains only full $\beta$ arcs so we have $$c-a=d-b=e-c.$$ Note that the region $C$ contains base point $z$ so $c=0$, which implies $Q$ has both positive and negative multiplicity unless  $$a=b=c=d=e=f=0.$$ Since for each $\Delta_i$ $i\neq 2$ the local pictures are the same, this tells us that if $Q$ has only positive or only negative multiplicities then $Q$ contains no $\alpha_i,\beta_i,\gamma_i$ as boundary for $i\neq 2$. So the only possibility for $Q$ to be nonzero is near $\Delta_2$. 

For the region around $\Delta_2$ we again label the regions $A$, $B$,
$C$, $D$, and $E$ as in Figure \ref{Local Triangles}, and the multiplicities are given by the integers $a$, $b$, $c$, $d$, and $e$ respectively. Again since $\pa Q$ contains only full $\alpha$ curves we must have $$a-c=c-d=b-e,$$ 
and since $C$ is the region containing base point $z$ we have $c=0$. Hence if $Q$ has only positive or only negative multiplicities then $a=d=0$, and $b=e$. If $b=e\neq 0$ we infer $\pa Q$  contains only the curve $\beta_2$, however since $\beta_2$ is not null-homologous in $\Sigma$ it can't bound a 2 chain by itself. So $b=e=0$ which shows the diagram is weakly admissible.

\end{proof}

\begin{lemma} (cf. \citep[Proposition 2.3]{Bc}) \label{Lemma 4.4}
     In the above setting, let $\bf{\bf{y'}}=(y'_1,y'_2,...,y'_g)\in \mathbb{T_\gamma}\cap \mathbb{T_\alpha}$ be an intersection point and $\psi' \in \pi_2(\bf{x},\bf{\Theta},\bf{y'})$ a Whitney triangle with only nonnegative local multiplicities. If $n_z(\psi')=0$ and $\mathfrak{s}_z(\psi')=\mathfrak{s}$, then $\bf{y'}=\bf{y}$, and $\psi'=\psi$.
\end{lemma}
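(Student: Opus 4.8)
The plan is to prove Lemma \ref{Lemma 4.4} by a careful local analysis of the domain $\mathbb{D}(\psi')$, modeled on the proof of weak admissibility (Lemma \ref{Lemma 4.3}) and on \citep[Proposition 2.3]{Bc}. First I would fix $\psi \in \pi_2(\mathbf{x},\mathbf{\Theta},\mathbf{y})$ to be the reference triangle whose domain is $\sum_i \Delta_i$, so that $\mathfrak{s}_z(\psi) = \mathfrak{s}$ and $n_z(\psi) = 0$. Given any other triangle $\psi' \in \pi_2(\mathbf{x},\mathbf{\Theta},\mathbf{y'})$ with $\mathfrak{s}_z(\psi') = \mathfrak{s}$, the difference $\psi' - \psi$ is a sum of a triply-periodic domain $Q$ and an $\alpha$-periodic or $\gamma$-periodic class accounting for the change of endpoint from $\mathbf{y}$ to $\mathbf{y'}$; but since $M'$ is a rational homology sphere there are no nontrivial $\gamma$-periodic (or $\alpha$-periodic) classes once we also impose $n_z = 0$ and fix the $Spin^c$ structure, so the discrepancy $\mathbb{D}(\psi') - \mathbb{D}(\psi)$ is exactly a triply-periodic domain. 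Hence it suffices to show that if $\mathbb{D}(\psi') = \sum_i \Delta_i + Q$ has only nonnegative multiplicities and $n_z(\psi') = 0$, then $Q = 0$; this will force $\mathbb{D}(\psi') = \sum_i \Delta_i$, and then a local argument at each small triangle identifies $\psi'$ with $\psi$ and $\mathbf{y'}$ with $\mathbf{y}$.

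The core step is the nonnegativity argument. I would work region-by-region using the same labels $A,\dots,F$ (resp. $A,\dots,E$ near $\Delta_2$) as in Figure \ref{Local Triangles}. The point base $z$ lies in the region $C$ of every local picture, and $n_z(\psi') = 0$ pins the multiplicity there to be the multiplicity $C$ already carries in $\sum_i \Delta_i$ — namely $0$ for $i \neq 2$ and likewise $0$ near $\Delta_2$ with the conventions of the figure. Combined with the boundary conditions that $\partial Q$ consists of full $\alpha$, $\beta$, and $\gamma$ arcs (giving the same linear relations among $a,b,c,d,e,f$ as in the proof of Lemma \ref{Lemma 4.3}), and the requirement that $\mathbb{D}(\psi')$ be everywhere nonnegative, I would deduce that away from $\Delta_2$ the domain of $Q$ vanishes identically, and near $\Delta_2$ that $Q$ is a multiple of the class carrying only $\beta_2$ in its boundary — which is impossible for a nonzero $2$-chain since $\beta_2$ is non-separating in $\Sigma$. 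Therefore $Q = 0$.

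Once $\mathbb{D}(\psi') = \sum_i \Delta_i$, the intersection point $\mathbf{y'}$ is determined: the $\alpha$-boundary of $\sum_i \Delta_i$ runs from $\mathbf{x}$ along the $\alpha$ arcs of the small triangles and terminates at the points $y_i = a_i \cap c_i$, so $\mathbf{y'} = \mathbf{y}$; and since a holomorphic (or even homotopy-class) triangle is determined by its domain together with its vertices in a diagram where every region is a disk, $\psi' = \psi$. I expect the main obstacle to be the bookkeeping around $\Delta_2$: there the local picture differs because $\beta_2$ is the short curve $b_2$ (a parallel copy of the binding $B$) rather than a full $\beta_i = b_i \cup \overline{\phi(b_i)}$, and $\alpha_2$, $\gamma_2$ meet it in the destabilizing configuration, so one must check that no ``extra'' nonnegative triply-periodic piece can hide in the regions adjacent to $\Delta_2$ — this is exactly where the non-separating nature of $\beta_2$ in $\Sigma$ is used, and it needs to be invoked carefully. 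The rational-homology-sphere hypothesis on $M'$ enters precisely to rule out competing $\gamma$-periodic classes (equivalently, alternative choices of $\mathbf{y'}$ in the same relative $Spin^c$ class), so I would state that reduction explicitly before descending to the local combinatorics.
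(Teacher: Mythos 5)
Your high-level structure (local analysis around the small triangles, with a separate argument near $\Delta_2$) matches the paper, but there is a genuine gap in your reduction, and it is precisely the step you flag as the one you would ``state explicitly before descending to the local combinatorics.''

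You claim that because $M'$ is a rational homology sphere ``there are no nontrivial $\gamma$-periodic (or $\alpha$-periodic) classes,'' so $\mathbb{D}(\psi') - \mathbb{D}(\psi)$ is a triply-periodic domain and $\mathbf{y'}=\mathbf{y}$. This conflates two different things. By \citep[Proposition 8.5]{OSht}, $\mathbb{D}(\psi') - \mathbb{D}(\psi) = \mathbb{D}(\phi_1)+\mathbb{D}(\phi_2)+\mathbb{D}(\phi_3)$ with $\phi_1\in\pi_2(\mathbf{x},\mathbf{x})$, $\phi_2\in\pi_2(\mathbf{\Theta},\mathbf{\Theta})$, $\phi_3\in\pi_2(\mathbf{y},\mathbf{y'})$. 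The rational-homology-sphere hypothesis on $M'$ controls only $\pi_2(\mathbf{x},\mathbf{x})$, because $-M'$ has Heegaard diagram $(\Sigma,\beta,\alpha)$ and $\mathbf{x}\in\mathbb{T}_\beta\cap\mathbb{T}_\alpha$. The endpoint-change class $\phi_3$ lives in $\pi_2(\mathbf{y},\mathbf{y'})$ for the diagram $(\Sigma,\gamma,\alpha)$, which describes $-M$, not $-M'$; and when $\mathbf{y}\neq\mathbf{y'}$ this is a Whitney disk class, not a periodic class, so it is not something ruled out by any homology condition. Hence nothing in your reduction forces $\mathbf{y'}=\mathbf{y}$, and without that, $\mathbb{D}(\psi')-\mathbb{D}(\psi)$ is not triply-periodic, and the Lemma \ref{Lemma 4.3}--style relations you want to impose on $Q$ do not hold (the boundary has corners at $\mathbf{y}$ and $\mathbf{y'}$ along $\alpha$ and $\gamma$, so it does not consist of full curves). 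You also do not address $\phi_2\in\pi_2(\mathbf{\Theta},\mathbf{\Theta})$, which is nontrivial here because $(\Sigma,\beta,\gamma)$ describes a connected sum of $S^1\times S^2$'s.

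The paper's proof avoids this by working directly with $\mathbb{D}(\psi')$ rather than with a difference $Q$. Near each $\Delta_i$ with $i\neq 2$ it uses the \emph{corner} conditions at $x_i$ and $\theta_i$ (relations such as $a+d=b+c+1$, which carry the $+1$'s from the corners and are therefore different from the full-curve relations in Lemma \ref{Lemma 4.3}), together with $c=0$ and nonnegativity, to force $y'_i=y_i$ and $\mathbb{D}(\psi')$ to match $\Delta_i$ locally. Only after that does it invoke the OSht decomposition: it kills $\mathbb{D}(\phi_1)$ using the rational-homology-sphere condition on $M'$, kills $\mathbb{D}(\phi_2)$ using that its boundary could only involve $\beta_2$ (which is not a null-homologous combination of the remaining $\beta$ and $\gamma$ curves), and then runs a separate corner-condition argument near $\Delta_2$ to conclude $\Delta_2'=\Delta_2$. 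Your proposal, as stated, skips the step that actually establishes $\mathbf{y'}=\mathbf{y}$; you assume it implicitly once you posit that the discrepancy is a triply-periodic $Q$, which is circular.
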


\begin{proof}
We want to show that the domain of $\psi'$,  $\mathbb{D}(\psi')$ is the same as that of $\psi$, i.e.\ that $\mathbb{D}(\psi')=\mathbb{D}(\psi) = \Delta_1+\Delta_2+...+\Delta_g$.
As what we did above, we again look at what happens locally near $\Delta_i$ ($i\neq 2$) and $\Delta_2$. 

First we look around $\Delta_i$ ($i\neq 2$). Let $a$, $b$, $c$, $d$, $e$ and $f$ be the multiplicities of $\mathbb{D}(\psi')$ at $A$, $B$, $C$, $D$, $E$ and $F$. Since $\mathbb{D}(\psi')$ has corners at $x_i$ and $\theta_i$, we have \begin{equation}  
\tag{1}
    a+d=b+c+1, \ d+c=b+e+1 
\end{equation}

Since $c=0$, equations $(1)$ imply $a=-e$, and because the domain only contains non-negative multiplicities $a=e=0$. Therefore $(1)$ becomes 

\begin{equation}  \label{i}
\tag{2}
    d=b+1 
\end{equation}

Now if $y_i' \neq y_i$ for some $i\neq 2$ it implies $d+f=0$, so $d=f=0$, but when we put $d=0$ in $(2)$ we have $b=-1$ which is a contradiction. So $y_i=y'_i$, which implies $d+f=1$ (since $c=e=0$). If $d=0$ then combining with $(2)$ again we get $b=-1$, a contradiction. So we must have $d=1$ and $f=0$, which means that altogether $d=1$ and  $a=b=c=e=f=0$.  

Because the above argument works for all $i\neq 2$,  we conclude that $\mathbb{D}(\psi')$ is locally just $\Delta_i$ for $i\neq 2$, in other words $\mathbb{D}(\psi')=\Delta_1+\Delta_2'+\Delta_3+...+\Delta_g$, where $\Delta_2'$
is a region missing base point $z$ and whose oriented boundary consists
of arcs along $\beta_2$ from $\theta_2$ to $x_2$; along $\alpha_2$ from $x_2$ to $y'_2$; and along $\gamma_2$ from $y'_2$ to $\theta_2$. So we are left to show $y'_2=y_2$, and $\Delta'_2=\Delta_2$. 

Since $\mathfrak{s}_z(\psi')=\mathfrak{s}(\psi)=\mathfrak{s}$, by \citep[Proposition 8.5]{OSht}  we have $$\mathbb{D}(\psi')-\mathbb{D}(\psi)=\mathbb{D}(\phi_1)+\mathbb{D}(\phi_2)+\mathbb{D}(\phi_3)$$ where $\phi_1$, $\phi_2$, and $\phi_3$ are in $\pi_2(\bf{x},\bf{x})$, $\pi_2(\bf{\Theta},\bf{\Theta})$, and $\pi_2(\bf{y},\bf{y'})$ respectively. We want to show $\mathbb{D}(\psi')-\mathbb{D}(\psi)=0$.

First since $-M'$, which has Heegaard description $(\Sigma,\beta,\alpha)$, is a rational homology sphere, we have $\pi_2({\bf{x},\bf{x}})=0$ and therefore we can suppose $\mathbb{D}(\phi_1) = 0$ (strictly, $\pi_2({\bf{x},\bf{x}})$ always contains a copy of $\mathbb{Z}$ corresponding to multiples of the Heegaard surface, but these are not relevant  here because of positivity of multiplicities and the condition that $n_z(\psi) = n_z(\psi') = 0$). Moreover since $\mathbb{D}(\psi')-\mathbb{D}(\psi)=\Delta'_2 -\Delta_2$ and $\pi_2({\bf{x},\bf{x}})=0$, if $\mathbb{D}(\phi_2)\neq 0$ the ($\beta_2$ portion of the) boundary of $\mathbb{D}(\phi_2)$ can contain only multiples of $\beta_2$. However  $\beta_2$ is homologically independent from all linear combinations of $\beta_i$ for $i\neq 2$ and $\gamma_j$ for all $j$, and we conclude $\mathbb{D}(\phi_2)=0$. 

Therefore $$\mathbb{D}(\psi')-\mathbb{D}(\psi)=\Delta'_2 -\Delta_2=\mathbb{D}(\phi_3)$$
where $\mathbb{D}(\phi_3)$ is a domain not containing $z$, whose oriented boundary consists of arcs along $\alpha_2$ from $y_2$ to $y'_2$ and arcs along $\gamma_2$ from $y'_2$ to $y_2$. Since $\Delta_2$ has multiplicities  1 and 0 in the regions $D$ and $E$ respectively, the multiplicities of $\Delta'_2$ at $D$ and $E$ must satisfy \begin{equation}  
\tag{3}
    d-1=e.
\end{equation} At the same time, the boundary conditions at $x_2$ and $\theta_2$ tell us $$d+b-1=c+e.$$ Combining these two we get $b=c=0$. 

Now suppose $y_2\neq y_2'$. The boundary conditions then say $a+d=c+c=0$, so that $a=d=0$. Then when we return to $(3)$ we get $e=-1$, which is a contradiction---so $y_2=y_2'$. In this case the boundary constraint tells us $$a+d=1.$$ If we combine this with $(3)$ it follows $a=0=c=e=b$ and $d=1$. Hence $\Delta_2'=\Delta_2$, which implies $\mathbb{D}(\psi')-\mathbb{D}(\psi)=0$.

\end{proof}

Now we are ready to show Proposition \ref{proposition 4.2}.

\begin{proof}[Proof of Proposition \ref{proposition 4.2}]

 Lemma \ref{Lemma 4.3} says the map $F_{B,\mathfrak{s}}$ can be computed from the Heegaard diagram now under consideration. (Strictly, weak admissibility suffices to compute the homomorphism in the hat theory, while the minus theory requires strong admissibility for the $Spin^c$ structure under consideration. Weak and strong admissibility coincide if the $Spin^c$ structure is torsion on each boundary component. Alternatively, weak admissibility is also sufficient to define maps in the minus theory if we work over the power series ring ${\mathbb{F}}[[U]]$, so we work in that setting in the most general case.) Since the small triangle $\psi$ has a unique holomorphic representative, and Lemma \ref{Lemma 4.4} implies that the small triangle is the only one contributing to the map $F_{B,\mathfrak{s}}$, we have  $F_{B,\mathfrak{s}}(\bf{x})=\bf{y}$. We only left to show ${\bf{x}}=\mathfrak{L}(M',\xi',L')$, and ${\bf{y}}=\mathfrak{L}(M,\xi,L)$. The latter is clear by the definition of LOSS invariant . 

For $\mathfrak{L}(M',\xi',L')$, denote $(P_B,\phi_B)$ the corresponding open book after capping off binding $B$, and by abuse of notation consider $\{a_i\}$ (for $i\neq 2$) as a basis for $P_B$. By definition of LOSS again we can  see that $\mathfrak{L}(M',\xi',L')$ is represented by ${\bf{x'}}=(x_1,x_3,...,x_g)$. The diagram $(\Sigma, {\bf{\beta},\bf{\alpha}}, z, w)$ then differs from the one obtained from $P_B$ by a stabilization of the Heegaard diagram. Then by \citep[Section 10]{OSht}, we see under the isomorphism induced by the stabilization we map $\bf{x'}$ to $\bf{x}$. So ${\bf{x}}=\mathfrak{L}(M',\xi',L')$.
\end{proof}

\begin{remark}
    Since the Legendrian knot $L$ is a Legendrian approximation of $T$ ($L$ is parallel to $T$ in the open book), the Theorem \ref{capoffthm} follows. 
\end{remark} 

Now we are ready to prove Theorem \ref{th 1.1}. 

\begin{proof}[Proof of Theorem 1.1]
We first choose an arbitrary open book $(P,\phi)$ supporting $(Y,\xi)$ and having $L$ and $S$ on the page $P$, where as we saw in section \ref{Contact surgery and Capping off cobordism} we may further assume $S$ is parallel to some binding of $P$. Again by \citep[Lemma 6.5]{BEHccm} we can stabilize the open book such that $S^-$, the negative stabilization of $S$, is parallel to a binding component $B$, and $L^-$ is also parallel to some other binding component $T$. We again call the stabilized open book $(P,\phi)$. 

Now by Theorem \ref{3.4} the smooth cobordism from $-Y$ to $-Y_n(S)$ induced by contact $n$ surgery (smooth $tb(S)+n$) is the same as a capping off cobordism from $Y_n(S)$ to $Y$ viewed upside down, where we cap off a binding component $B_S\subset Y_n(S)$ as we saw in section \ref{Contact surgery and Capping off cobordism}.  (Note here that $S$ is playing the role of $L$ in section \ref{Contact surgery and Capping off cobordism} and the above parts of this section.)

 In other words we have an open book $(P_S,\phi_S)$ for $(Y_n(S),\xi_{n}^-(S))$ such that capping off $B_S$ gives us back $(P,\phi)$, and such that the knot induced by $({L^{-}})_S$ is $L^-$. Then (possibly after further stabilization of $(P_S,\phi_S)$) we choose a basis $\bf{\{a_i\}}$ such that $(({L^{-}})_S,B_S,\bf{\{a_i\}})$ is adapted to $(P_S,\phi_S)$, and thus by Proposition \ref{proposition 4.2} $$F_{B_S,\mathfrak{s}}(\mathfrak{L}(Y,\xi,L^-))=\mathfrak{L}(Y_n(S),\xi^-_{n}(S),(L^{-})_S).$$ By equivalence of contact surgery and capping off we have  $$F_{S,\mathfrak{s}}(\mathfrak{L}(Y,\xi,L^-))=\mathfrak{L}(Y_n(S),\xi^-_{n}(S),(L^{-})_S)$$
 
 Finally because $(L^-)_S =(L_S)^-$, and the LOSS invariant is invariant under negative stabilization (Theorem \ref{2.1}) we conclude that $$F_{S,\mathfrak{s}}(\mathfrak{L}(Y,\xi,L))=\mathfrak{L}(Y_n(S),\xi^-_{n}(S),L_S)$$   
\end{proof}

\section{Application}

One interesting application of the main theorem is to give many more examples of Legendrian and transversely non-simple knots which are distinguished by their LOSS invariants, as stated in Theorem \ref{th 1.5} 

\subsection{Theorem \ref{th 1.5}}

To prove Theorem \ref{th 1.5} we need the following lemma. 

\begin{lemma}
{\label{lemma 5.1}}
    Let $L$ be an oriented Legendrian knot in $(Y,\xi)$, and for $i=1,2,...n$ let $e_i$ be arcs of $L$ such that they are horizontal parallel with the same orientation inside some Darboux ball $B$. Moreover let $S$ be an oriented max tb unknot in $B$ that links each $e_i$ positively once (so the linking number between $S$ and $L$ is $+n$). Then after doing $+2$ contact surgery on $S$, with the choice of stabilization being negative, the resulting contact manifold is contactomorphic to $(Y,\xi)$, but the resulting $e_i$'s are parallel Legendrian push offs of a negative stabilization of $e_1$ (thus, smoothly, the new strands have a full negative twist).
\end{lemma}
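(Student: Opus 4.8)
The plan is to reduce the statement to a computation inside the Darboux ball $B$ and carry it out with the contact surgery calculus of \citep{DGS,DGst}. Since contact $+2$-surgery on $S$ alters $(Y,\xi)$ only in a neighbourhood of $B$, it suffices to prove the statement when $(Y,\xi)=(S^3,\xi_{std})$, with $L$ replaced by the trivial Legendrian $n$-string tangle $\{e_i\}$ in $B$ and $S$ the max-$tb$ unknot encircling the $e_i$ once each; the general case follows by excision. In this local model one must establish two things: that contact $+2$-surgery on $S$ with the negative stabilisation returns $(S^3,\xi_{std})$, and that it carries $\{e_i\}$ to $n$ parallel Legendrian push-offs of a single negative stabilisation $e_1^-$.

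First I would convert the surgery on $S$ into a Legendrian surgery diagram using the DGS algorithm (Theorem~\ref{3.2} and the $n=2$ case of the discussion following it): contact $+2$-surgery on $S$ is contact surgery on the two-component link $S_0^1\cup S_1$, with coefficient $+1$ on a Legendrian push-off $S_0^1$ of $S$ and coefficient $-1$ on a Legendrian push-off $S_1$ of $S_0^1$ stabilised once negatively (this is the only place the choice $\xi_n^-$ enters). In a front projection, $S_0^1$ and $S_1$ are parallel copies of $S$, each encircling the $n$ strands once, with $S_1$ carrying a single negative zig-zag. To see that the ambient contact manifold is $(S^3,\xi_{std})$ I would argue by fillability: contact $(+1)$-surgery on the max-$tb$ unknot $S_0^1$ is well known to yield the Stein fillable $(S^1\times S^2,\xi_{std})$ (e.g.\ \citep{DGst}), and the subsequent contact $(-1)$-surgery on $S_1$ is a Weinstein $2$-handle attachment, hence preserves Stein fillability; since the underlying $3$-manifold is $S^3$ (as the smooth computation below shows), the result is the unique Stein fillable contact structure there, namely $\xi_{std}$. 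The smooth shadow of the whole operation is a Rolfsen twist on the $(\pm1)$-framed unknot $S$: as $tb(S)+2=+1$, smooth $(+1)$-surgery on the unknot $S$ returns $S^3$ and inserts exactly one left-handed (negative) full twist in the $n$ strands running through the disk bounded by $S$. This gives the first assertion and the parenthetical claim about a full negative twist.

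The remaining --- and main --- point is to upgrade this smooth conclusion to the Legendrian statement by tracking the tangle $\{e_i\}$ through the Kirby and contact moves that simplify $S_0^1\cup S_1$. The subtlety is that $S_1$ is a meridian of $S_0^1$ only after one forgets the $e_i$, so simplifying the diagram drags the strands through the surgery solid tori and one cannot naively slam-dunk; carried along correctly, I expect the composite contactomorphism to send the $n$ parallel Legendrian push-offs of $e_1$ to the $n$ parallel Legendrian push-offs of $e_1^-$. This is consistent with the smooth count, because a single negative full twist of $n$ strands is exactly the difference between $n$ Legendrian push-offs of an arc and $n$ Legendrian push-offs of its negative stabilisation (compare the pairwise linking numbers and self-framings), while the \emph{single} negative stabilisation of the common core is produced by the \emph{single} negative zig-zag placed on $S_1$ in the DGS algorithm --- with a positive zig-zag the same bookkeeping would produce an overtwisted contact structure, so the hypothesis $\xi_n^-$ is genuinely used. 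Making this precise amounts to an explicit front-projection isotopy, pushing the $e_i$ off $S_1$ one cusp at a time during the cancellation while keeping every crossing sign straight; this is the step I expect to require the most care.
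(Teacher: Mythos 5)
Your strategic outline matches the paper: localise to a Darboux ball, run the DGS algorithm to turn $+2$ contact surgery on $S$ into a $(+1)$-surgery on a push-off and a $(-1)$-surgery on a once negatively stabilised push-off, and then track the tangle $\{e_i\}$ through the resulting Kirby/contact moves. The trouble is that you explicitly stop short of the step that \emph{is} the proof. You write that you ``expect'' the composite contactomorphism to carry the push-offs of $e_1$ to push-offs of $e_1^-$ and that ``making this precise amounts to an explicit front-projection isotopy\ldots this is the step I expect to require the most care.'' That deferred computation is essentially the entirety of the argument in the paper: after observing the symmetry in the strand orientation and reducing to $n=1$ (since the $e_i$ are parallel push-offs), the paper carries out an explicit chain of moves in the front projection --- Legendrian Reidemeister moves, the Ding--Geiges--Hamilton identification of contact surgery diagrams with contact handle diagrams, a contact handle slide of the strand over the $(-1)$-framed $2$-handle, and a $1$-/$2$-handle cancellation --- which exhibits the new strand as a negative stabilisation of the old one. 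Without this the lemma is unproved; the fillability and Rolfsen-twist discussion only pins down the ambient contact manifold and the smooth twist, neither of which is the contentious part.

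Two smaller issues. First, you never invoke the reduction to $n=1$; since the $e_i$ are assumed to be Legendrian push-offs of one arc, it suffices to analyse one strand and then take parallel push-offs, which greatly simplifies the front-projection bookkeeping. Second, the aside that a positive zig-zag on the DGS companion ``would produce an overtwisted contact structure'' is not justified and is most likely false: by the front-projection mirror symmetry one expects the positive stabilisation choice to again yield $(S^3,\xi_{std})$ but to carry the $e_i$ to push-offs of a \emph{positive} stabilisation of $e_1$. The negative choice is forced not by tightness but because the LOSS invariant is preserved under negative, and not positive, stabilisation (Theorem~\ref{2.1}), which is what makes the naturality argument in Theorem~\ref{th 1.5} go through.
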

\begin{figure}[H]
    \centering
    \includegraphics[width=\textwidth]{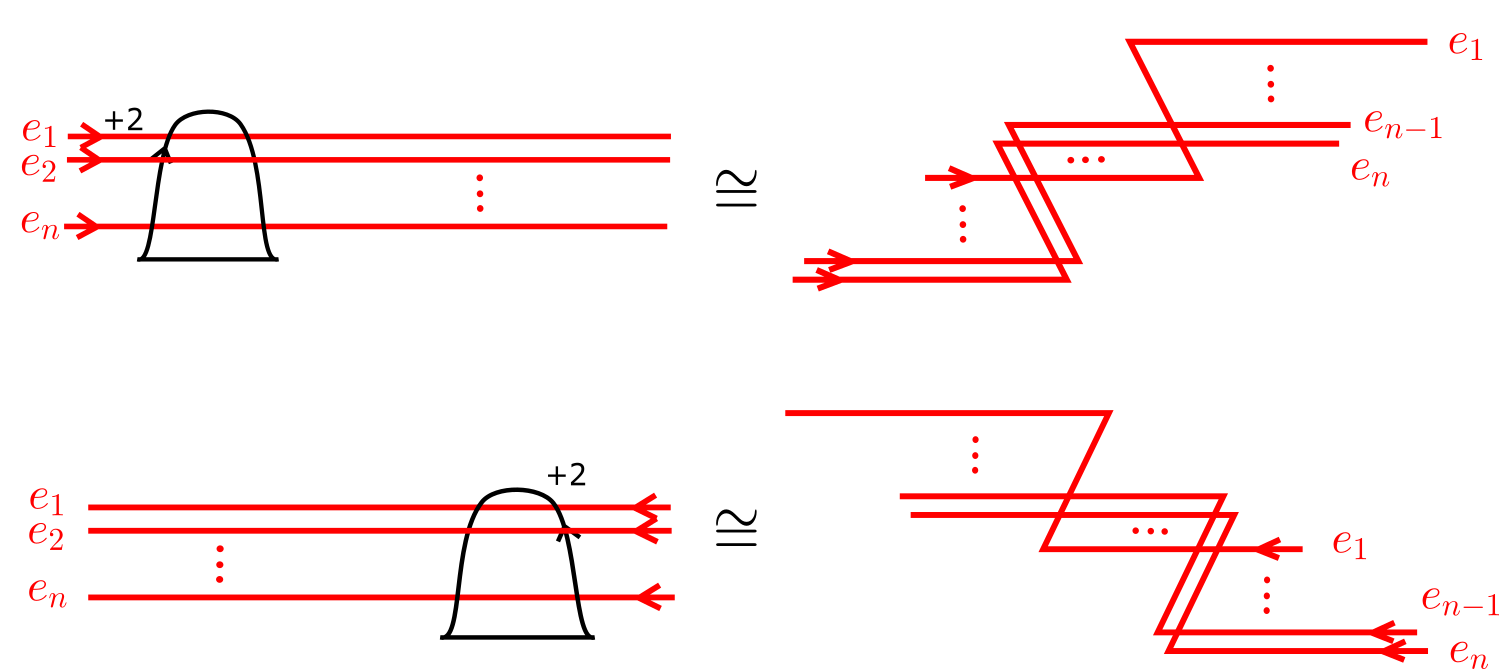}
    \caption{there are $n$ red arcs $e_1$ to $e_n$
    }
    \label{2 different orientations}
\end{figure}
\begin{proof}
    There are two possibilities of how those $e_i$ are oriented, either from left to right or from right to left. So to prove the Lemma it's the same to show the equivalence of the pair of contact surgery diagrams in Figure \ref{2 different orientations}. 

    Since the proof is symmetric with one and the other we will only show top case of the figure (strand orientation from left to right). Note that since the $e_i$ are Legendrian push offs of each other, it's enough to consider the case when $n=1$. In Figure \ref{+2 contact surgery} we exhibit a sequence of Legendrian isotopies, contact surgery and contact handle moves  to show the equivalence of the two diagrams when $n=1$. 

    \begin{figure}
    \centering
    \includegraphics[width=\textwidth]{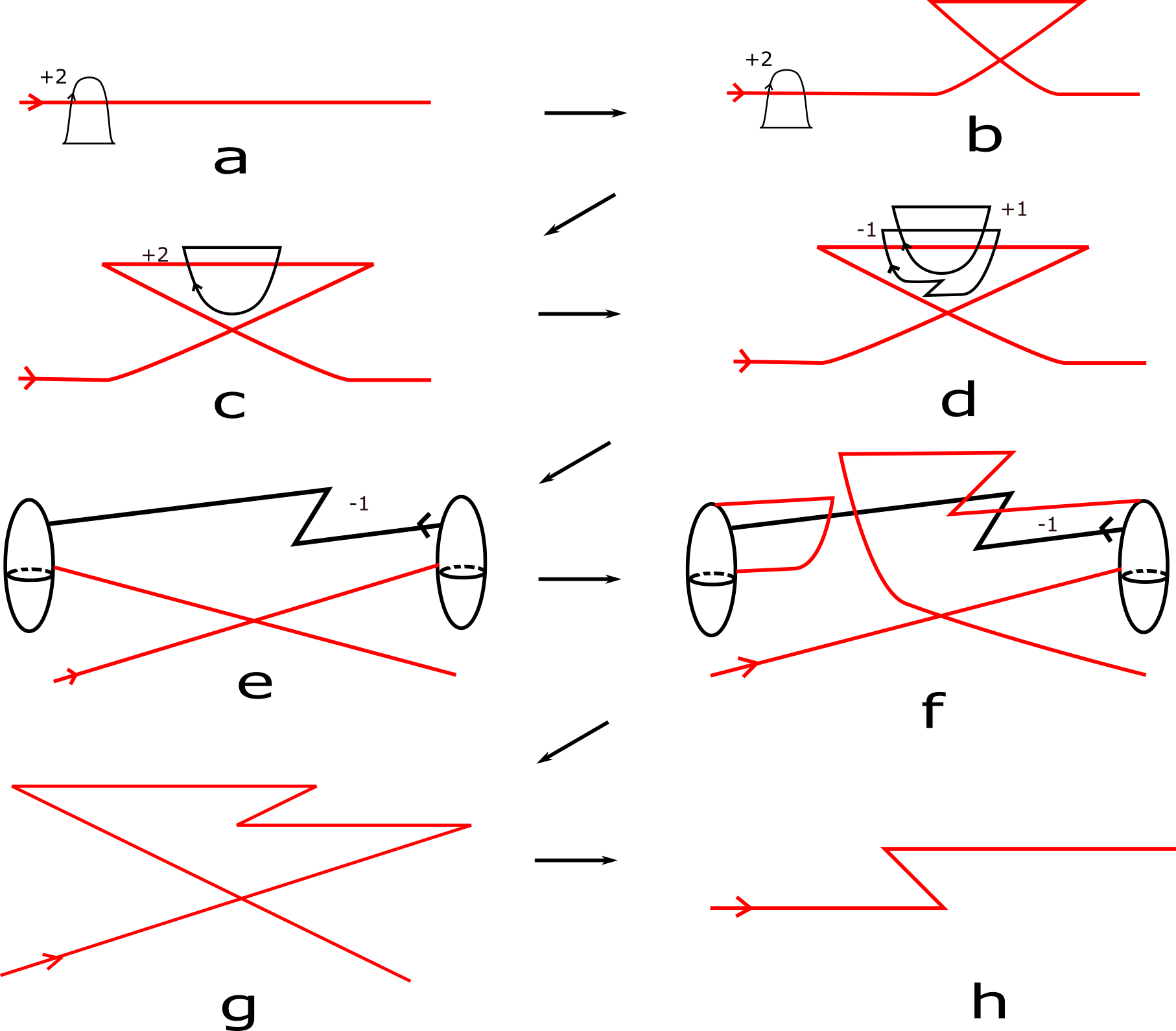}
    \caption{ From \textbf{a} to \textbf{b} we use Legendrian Reidemeister 1 moves; from \textbf{b} to \textbf{c} we isotopy Legendrian meridian from bottom to top using \citep[Figure 13-15]{DGh}; from \textbf{c} to \textbf{d} we use the DGS algorithm \citep{DGS} to change $+2$ contact surgery to $+1$ and $-1$ contact surgeries, and we use negative stabilization as the assumption; from \textbf{d} to \textbf{e} we use \citep[Theorem 4]{DGh}  to identify surgery diagram with handle diagram; from \textbf{e} to \textbf{f} we handle slide the red curve over the $-1$ framed handle using \citep[Proposition 1]{DGh}; from \textbf{f} to \textbf{g} we cancel out the $-1$ framed 2 handle with the 1 handle; and last we perform a Legendrian Reidemeister move to get rid of the extra crossing and attain \textbf{h}.
    }
    \label{+2 contact surgery}
\end{figure}
\end{proof}

To prove Theorem \ref{th 1.5} notice that given a compatible triple $(L,\sigma_n, B)$ the new Legendrian knot $L_\sigma$ we form only differs from $L$ by a positive twist. We intend to use the above Lemma on $L_\sigma$, so we can cancel out the positive twist with a negative twist and give back $L$. With this idea in mind we are ready to start the proof.

\begin{proof}[Proof of Theorem \ref{th 1.5}]

Since there exist a smoothly isotopy from $L$ to $L'$ that takes the $B$ to $B'$, and inside the balls we are doing the same operation to the arcs, we infer the resulting knots $L_\sigma$ and $L'_{\sigma'}$ are smoothly isotopic, proving the first part.  

Then let's consider the Darboux ball $B$ and the new Legendrian knot $L_\sigma$. Inside the ball the arcs $e_i$ have the same orientation and can be considered to be initially horizontal and parallel (near the left side of the ball),  then they start doing a positive twist as we move from left to right. Now as in Lemma 5.1 let $S$ be an oriented max tb unknot that links both $e_i$ positively one time, and perform $+2$ contact surgery on $S$. We can think of this as happening near the horizontal parallel part of the $e_i$, so it looks like the top left diagram of figure \ref{untwist}. By Lemma \ref{lemma 5.1} this is equivalent to the top right of figure \ref{untwist}, then after sequence of Legendrian Reidemeister moves it's not hard to see we obtain the bottom right. 

\begin{figure}[H]
    \centering
    \includegraphics [width=\textwidth]{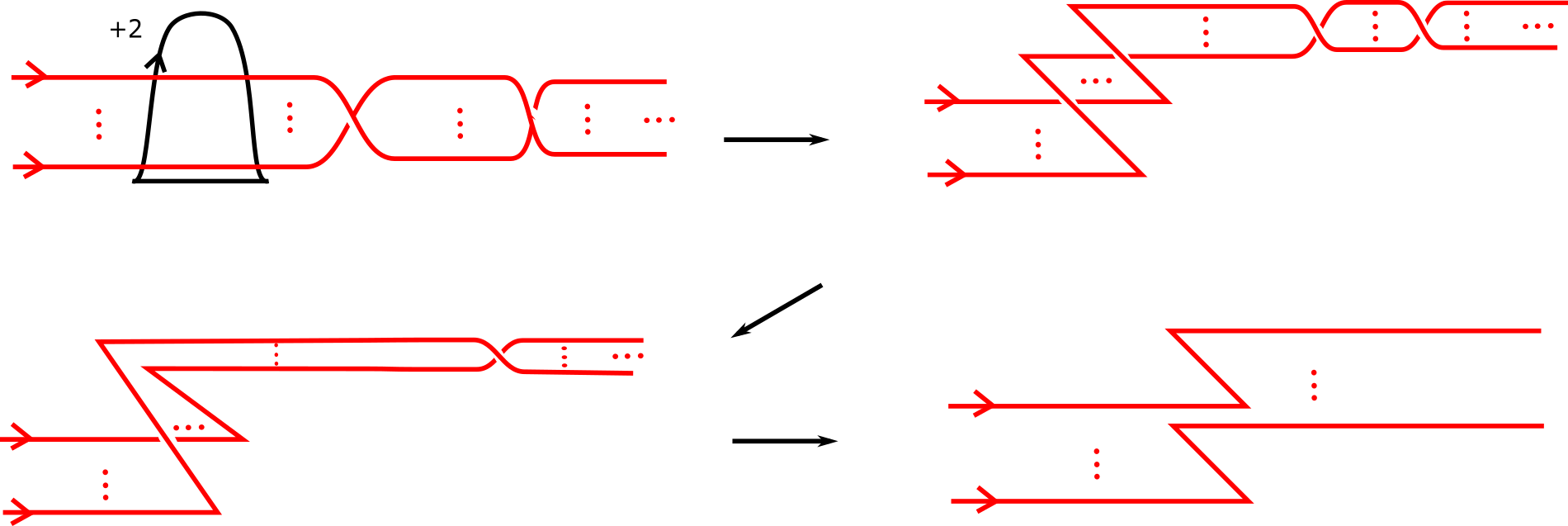}
    \caption{We apply the Lemma \ref{lemma 5.1} on the arcs in $L_\sigma$, then we undo the twist which gives back $L$ with $n$ additional negative stabilization. 
    }
    \label{untwist}
\end{figure}

From the picture we can easily see that  doing this $+2$ contact surgery on $S$ transforms $L_\sigma$ to the $n$-fold negative stabilization $L^{-n}$ of $L$. Now we want to apply Theorem \ref{th 1.1}. Since $L$ is null-homologous, by construction $L_\sigma$ is also null-homologous (we can add bands to the Seifert surface of $L$), and since we do not change the ambient contact 3 manifold $(Y,\xi)$ by doing $+2$ contact surgery, the map in Theorem \ref{th 1.1} is of the form  

\begin{equation}
    F_{S,\mathfrak{s}}: HFK^- (-Y,L_\sigma) \rightarrow HFK^-(-Y,L).
\end{equation}
Since $Y$ is a rational homology sphere, and LOSS invariant is unchanged under negative stabilization, by Theorem \ref{th 1.1} there exist some $Spin^c$ structure $\mathfrak{s}$ such that 

\begin{equation}\label{natural1}
    F_{S,\mathfrak{s}}(\mathfrak{L}(L_\sigma))=\mathfrak{L}(L^{-n})=\mathfrak{L}(L)
\end{equation}

We do the same thing, $+2$ contact surgery on $S'$ for arcs $e'_i$ of $L_{\sigma'}'$ inside $B_{\sigma'}$. So we also get a map 

\begin{equation}
    F_{S',\mathfrak{s'}}: HFK^- (-Y,L'_{\sigma'}) \rightarrow HFK^-(-Y,L')
\end{equation}
such that \begin{equation}\label{natural2}
    F_{S',\mathfrak{s'}}(\mathfrak{L}(L'_{\sigma'}))=\mathfrak{L}( L'^{-n})=\mathfrak{L}(L')
\end{equation}

By the construction of $L_\sigma$ and $L'_{\sigma'}$ and the assumption about the isotopy from $L$ to $L'$ it's easy to see that there also exist a smooth isotopy from $L_\sigma$ to $L_{\sigma'}'$ that sends $S$ to $S'$. Moreover since $S$ and $S'$ are null-homologous (they are inside the ball) and $\xi$ has torsion first Chern class ($Y$ is a rational homology sphere), by Proposition \ref{prop 1.4} 
$$\langle c_1(\mathfrak{s}),[\Tilde{Z}] \rangle = rot(S)+1$$ and 
$$ \langle c_1(\mathfrak{s'}),[\Tilde{Z}] \rangle = rot(S')+1$$

Note that both $\mathfrak{s}$ and $\mathfrak{s}'$ restrict to the $Spin^c$ structure corresponding to $\xi$ on both boundaries of the cobordism. This condition together with the value of $\langle c_1({\mathfrak{s}}), [\Tilde{Z}]\rangle$ determines a $Spin^c$ structure uniquely on the surgery cobordism. Thus, because $rot(S)=rot(S')$, we infer $\mathfrak{s}$ is equal to $\mathfrak{s'}$. 

Let us write $K$ and $K_\sigma$ for the smooth knot types underlying $L$, $L'$ and $L_\sigma$, $L'_{\sigma'}$, respectively. By the results of \citep{JTZnmh}, we can consider the LOSS invariants of $L$ and $L'$ to lie in the same group $HFK^-(-Y, K)$, and similarly those of $L_\sigma$ and $L'_{\sigma'}$ lie in $HFK^-(-Y, K_\sigma)$. More precisely, this means that there are canonical isomorphisms
\begin{align*}
HFK^-(-Y, L_\sigma)&\to HFK^-(-Y, K_\sigma), \qquad HFK^-(-Y, L'_{\sigma'})\to HFK^-(-Y, K_\sigma)\\
HFK^-(-Y, L)&\to HFK^-(-Y, K), \qquad HFK^-(-Y, L')\to HFK^-(-Y, K).
\end{align*}
With these identifications in mind, we will drop the distinction between the circles $S$ and $S'$, as they are ambiently smoothly isotopic. 




 Now for any $d\in MCG(Y,K_\sigma)$, let $d(S)$ denote the induced knot and $d_S\in MCG(Y, K)$  the induced diffeomorphism after the surgery on $S$; moreover let $d^*$,$d_S^*$ be the induced maps on knot Floer homology and $d(\mathfrak{s})$ be the induced $Spin^c$ structure. Then by Theorem 8.9 and Corollary 11.17 in \citep{Jcsf}, and Theorem 1.8 in \citep{JTZnmh} we have the following commutative diagram. 

 \begin{equation}
      \begin{tikzcd}
	{HFK^- (-Y,K_\sigma)} &&&& {HFK^-(-Y,K)} \\
	{HFK^- (-Y,K_{\sigma})} &&&& {HFK^-(-Y,K)}
	\arrow["{F_{S,\mathfrak{s}}}", from=1-1, to=1-5]
	\arrow["d^*", from=1-1, to=2-1]
	\arrow["d_S^*", from=1-5, to=2-5]
	\arrow["{F_{d(S),d(\mathfrak{s})}}"', from=2-1, to=2-5]
\end{tikzcd} \label{CD2}
 \end{equation}
 which implies
 
 $$d_S^*(F_{S,\mathfrak{s}}(\mathfrak{L}(L_\sigma)))= F_{d(S),d(\mathfrak{s})} (d^*(\mathfrak{L}(L_\sigma))).$$
 Since $F_{S,\mathfrak{s}}(\mathfrak{L}(L_\sigma)) = \mathfrak{L}(L)$, we have

$$d_S^*(\mathfrak{L}(L))= F_{d(S),d(\mathfrak{s})} (d^*(\mathfrak{L}(L_\sigma))).$$

Our assumption is that $[\mathfrak{L}(L)]\neq [\mathfrak{L}(L')]$ (strictly, that these $MCG$ orbits are different under the canonical isomorphisms above). Now suppose $[\mathfrak{L}(L_\sigma)]=[\mathfrak{L}(L'_{\sigma'})]$, so that there exists $d\in MCG(Y,L_\sigma)$ such that $d^* (\mathfrak{L}(L_\sigma)) =\mathfrak{L}(L'_{\sigma'})$. Combined with the above, we infer
\[
d_S^*(\mathfrak{L}(L))= F_{d(S),d(\mathfrak{s})}(\mathfrak{L}(L'_{\sigma'})).
\]
Now we claim that this class is the same as $F_{S, \mathfrak{s}}(\mathfrak{L}(L'_{\sigma'}))$. To see this, note first that implicit in the condition that $d^*(\mathfrak{L}_\sigma) = \mathfrak{L}'_{\sigma'}$ is the requirement that $d_*(\xi) = \xi$. Moreover, since we are free to modify $d$ by an isotopy (fixing $K_\sigma$), we can suppose that $d$ is the identity on the ball containing $S$. 
Since $d$ preserves the contact structures it must fix the induced $Spin^c$ structures on the boundary $-Y\sqcup -Y$. As the Chern number evaluation on the cobordism is also preserved, we infer $(d(S), d(\mathfrak{s})) = (S,\mathfrak{s})$. By the naturality theorem, it then follows that $F_{d(S), d(\mathfrak{s})}(\mathfrak{L}'_{\sigma'}) = F_{S', \mathfrak{s}'}(\mathfrak{L}'_{\sigma'}) =\mathfrak{L}(L')$. From the equation above, we obtain  $d_S^*(\mathfrak{L}(L)) = \mathfrak{L}(L')$, contrary to assumption.


 

Exactly same arguments work for  $\mathfrak{\widehat{L}}$.
\end{proof}

\subsection{Non-simplicity of Legendrian and transverse knot} \label{section 5.2}

Let's see an example of non-simple knot using Theorem $\ref{th 1.5}$ 

\begin{proof} [Proof of Corollary \ref{cor 1.6}]

It's easy to see the two Legendrian knots in figure \ref{m(9_7)} are smoothly isotopic to $m(9_7)$ and have same $tb$ and $rot$. We claim that they have different $[\mathfrak{\widehat{L}}]$, which will imply the two are not Legendrian isotopic, and also that their transverse push offs are not transverse isotopic. The Legendrians in Figure \ref{m(9_7)} were obtained by an application of (the construction leading to) Theorem \ref{th 1.5} to the two knots in figure \ref{m(7_2)}. According to \citep[Theorem 1.3] {OSct} the two knots in Figure \ref{m(7_2)} have different $[\mathfrak{\widehat{L}}]$; moreover we can smoothly isotop the left side of Figure \ref{m(7_2)} to the right while fixing everything in the green circle. This verifies the assumptions of Theorem \ref{th 1.5}, thus after adding a full twist to the arcs in green circle, two new Legendrian knots in figure \ref{m(9_7)} have different $[\mathfrak{\widehat{L}}]$ invariant. 

\begin{figure}[H]
    \centering
    \includegraphics{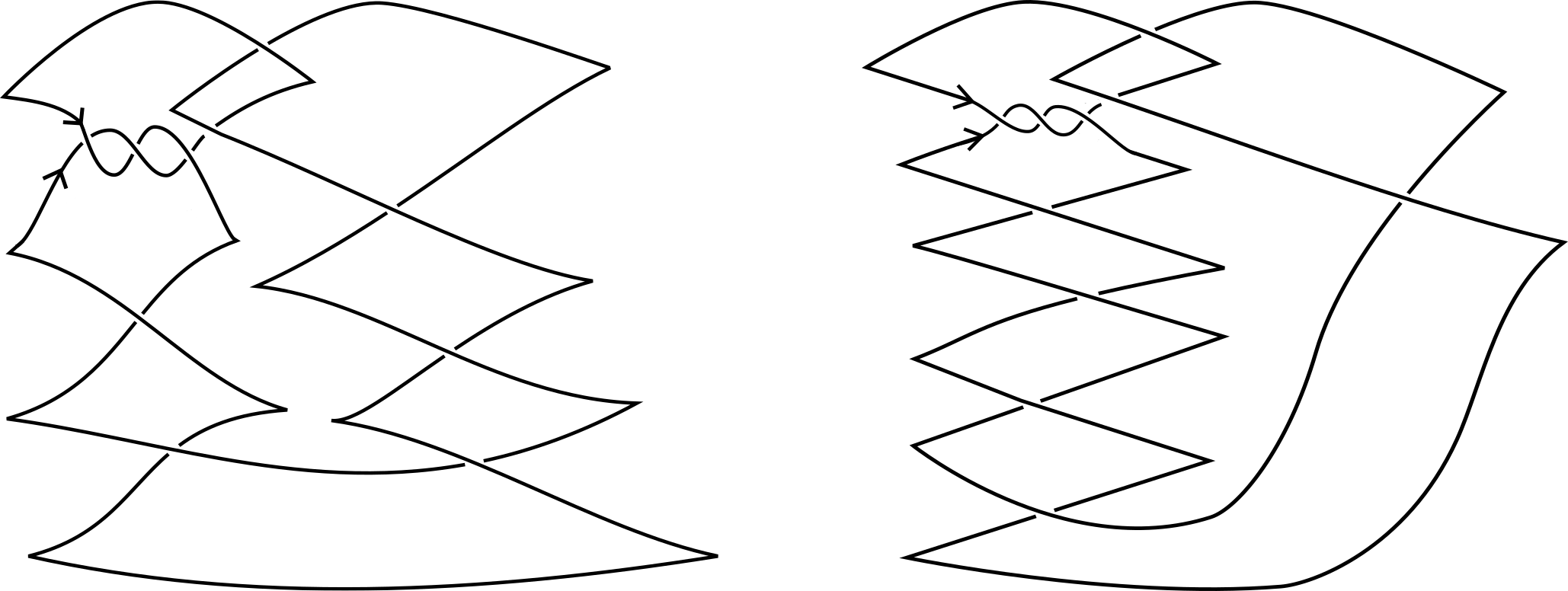}
    \caption{Both of these are smoothly isotopic to $m(9_7)$}
    \label{m(9_7)}
\end{figure}

\begin{figure} [H]
    \centering
    \includegraphics{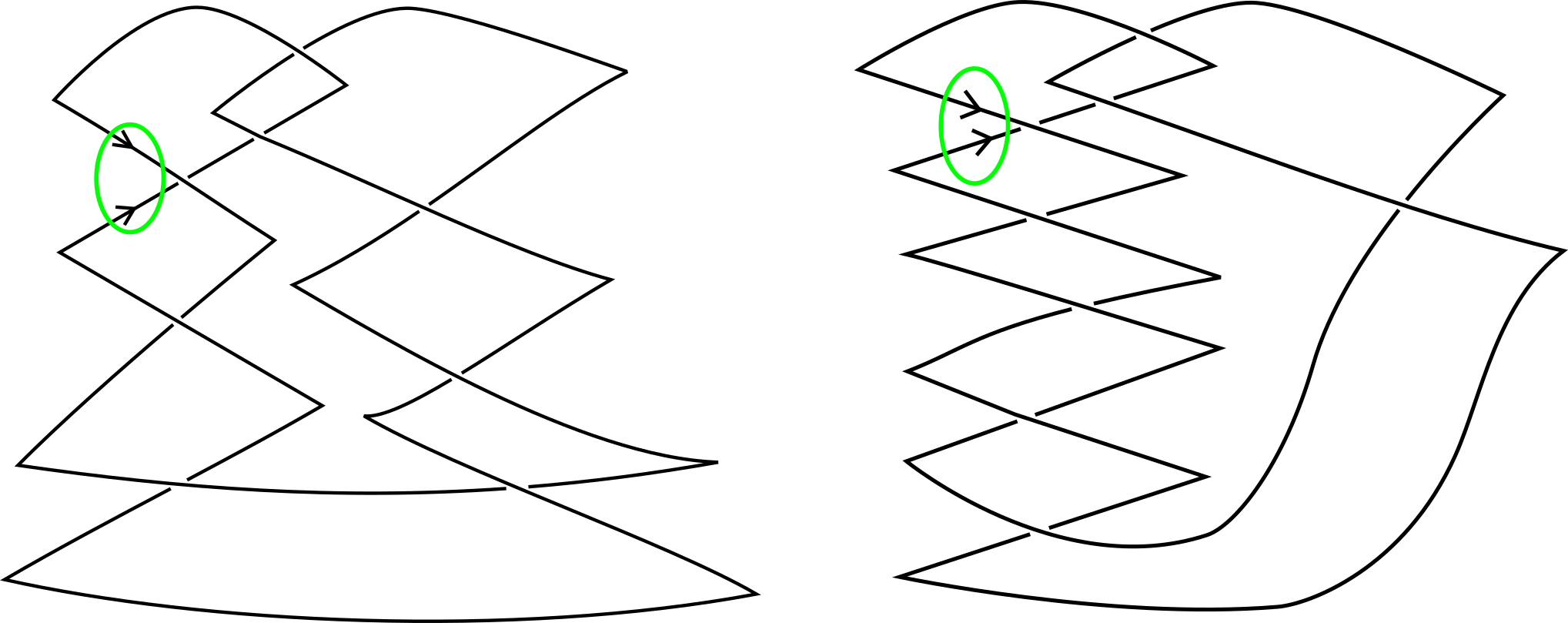}
    \caption{Two different Legendrian representatives of the Eliashberg–Chekanov twist knot $E_5=m(7_2)$. The green circle indicate where we apply Theorem \ref{th 1.5}}
    \label{m(7_2)}
\end{figure} \end{proof}

Notice that the knot $m(9_7)$ is a rational knot and in Conway's notation \citep{Cek} it is the [-3,-5,2] knot. Using similar ideas as the above, we can get infinite families of knot that are Legendrian and transversely non-simple. 

\begin{theorem} 
{\label{th 5.2}}
    Let $m,n$ be positive integers with $n>3$ and odd. In Conway's notation the knot  $[-2m-1,-n,2]$ (Figure \ref{two bridge knot}) has at least $\lceil \frac{n}{4} \rceil$ Legendrian (transverse) representatives that have  $tb=2m+1$ and $rot=0$ (self-linking number $2m+1$) that are pairwise not Legendrian (transverse) isotopic.
\end{theorem}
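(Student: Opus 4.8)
The plan is to build the family by iterating the twisting construction of Theorem \ref{th 1.5} starting from known non-simple seeds. First I would identify the base case: the knot $[-2m-1,-n,2]$ with $n = 5$ is, for $m=1$, the knot $m(9_7)$ handled in Corollary \ref{cor 1.6}, and more generally $[-2m-1,-5,2]$ should be obtained from a twist knot $[-2m-1,-3,2]$ (an Eliashberg--Chekanov twist knot, or a close relative) by the construction leading to Theorem \ref{th 1.5}, applied to a pair of parallel strands sitting in a Darboux ball. For the twist-knot seeds one has, by \citep{OSct} (the same input used in Corollary \ref{cor 1.6}), two Legendrian representatives with $tb = 2m+1$, $rot = 0$ and distinct $[\mathfrak{\widehat{L}}]$. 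This supplies the first nontrivial distinguished pair.

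Next I would set up the induction on $n$ (through odd values $> 3$). The key geometric observation is that passing from $[-2m-1, -(n-2), 2]$ to $[-2m-1, -n, 2]$ amounts, in the relevant tangle region, to adding one more full positive twist to a pair of parallel equi-oriented strands — precisely the operation $L \rightsquigarrow L_\sigma$ for a compatible triple $(L, \sigma_2, B)$. One checks that the two Legendrian representatives produced at stage $n-2$ can be arranged so that a common Darboux ball $B$ meets each in two horizontal parallel co-oriented arcs, and that the smooth isotopy between the two representatives fixes $B$ (contactomorphically) and the arcs $e_i$; this is exactly the hypothesis list of Theorem \ref{th 1.5}. Since the ambient manifold is $(S^3, \xi_{std})$, a rational homology sphere, and the knots are null-homologous, Theorem \ref{th 1.5} then yields two representatives of $[-2m-1,-n,2]$ with distinct $[\mathfrak{\widehat{L}}]$. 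Iterating, each increment of $n$ by $2$ produces a new distinguished pair; tracking how many genuinely inequivalent representatives accumulate (organizing the twisting so the pairs at different stages are mutually non-isotopic, e.g. by how many twists separate them, which is a Legendrian invariant detected via $tb$ along an un-twisting) gives the count $\lceil n/4 \rceil$.

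Then I would verify the classical-invariant bookkeeping. Adding a full \emph{non-zigzagged positive} twist to two parallel strands in a Darboux ball does not change $tb$ or $rot$ of the underlying Legendrian (the front picture gains no cusps), so the claimed values $tb = 2m+1$, $rot = 0$ propagate unchanged through the induction; correspondingly the transverse push-offs all have self-linking number $2m+1$. The transverse statement follows because distinct $[\mathfrak{\widehat{L}}]$ (equivalently $[\mathfrak{\widehat{T}}]$ of the transverse push-off, via the negative-stabilization invariance of Theorem \ref{2.1} and Corollary \ref{cor 1.2}) obstructs transverse isotopy just as it obstructs Legendrian isotopy.

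The main obstacle I expect is the combinatorial/diagrammatic verification that the successive twisting operations really realize the Conway-notation change $[-2m-1, -(n-2), 2] \to [-2m-1, -n, 2]$ on the correct pair of strands \emph{and} that one can choose the Darboux balls and the smooth ambient isotopies compatibly at every stage, so that Theorem \ref{th 1.5}'s hypotheses are met each time; a related subtlety is arguing that the $\lceil n/4 \rceil$ representatives obtained are pairwise non-isotopic rather than merely coming in distinguished pairs, which requires a uniform invariant (such as the maximal $tb$ recoverable after removing a prescribed number of positive twists, or a direct comparison of $[\mathfrak{\widehat{L}}]$ across stages) rather than the pairwise distinctness that Theorem \ref{th 1.5} directly provides.
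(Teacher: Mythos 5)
The approach in your proposal has a structural mismatch with the way Theorem~\ref{th 1.5} can actually be deployed, and you partially notice this yourself in your closing paragraph. You propose to induct on the parameter $n$, producing ``a new distinguished pair'' at each stage, and then worry about how to upgrade a sequence of pairs into $\lceil n/4\rceil$ mutually non-isotopic representatives. That worry is well-founded: Theorem~\ref{th 1.5} takes as input a \emph{pair} of Legendrian representatives with distinct $[\widehat{\mathfrak{L}}]$ and outputs a pair with distinct $[\widehat{\mathfrak{L}}]$; iterating it in $n$ never increases how many representatives you have, so you cannot reach the count $\lceil n/4\rceil$ this way without a new idea. The paper's proof inducts on $m$, not $n$: the count $\lceil n/4\rceil$ is not built up inductively but is taken wholesale from \citep[Theorem 1.3]{OSct}, which already provides $\lceil n/4\rceil$ pairwise-distinct Legendrian representatives $E(k,l)$ of the Eliashberg--Chekanov twist knot $E_n$ (all with $tb=1$, $rot=0$). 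Then Theorem~\ref{th 1.5} is applied $m$ times (adding one full positive twist in the green circle each time) to this whole family simultaneously; because the theorem preserves distinctness of $[\widehat{\mathfrak{L}}]$ for each pair, the family of $\lceil n/4\rceil$ representatives of $[-2m-1,-n,2]$ remains pairwise distinct.

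Two further concrete errors. First, your base case is not justified: you would need $\lceil n/4\rceil$ distinguished Legendrian representatives of $[-2m-1,-3,2]$ (or of $[-2m-1,-5,2]$) with $tb=2m+1$, but \citep{OSct} only supplies that input for $E_n$ (which is the $m=0$ case in the paper's framing), not for general $m$. Second, your classical-invariant bookkeeping is wrong: you assert that adding a full non-zigzagged positive twist does not change $tb$. In fact it adds $+2$ to $tb$ (two new positive crossings between co-oriented strands, no new cusps) and leaves $rot$ unchanged; this is precisely how the paper gets $tb = 1 + 2m = 2m+1$ starting from $tb(E(k,l))=1$. With your claim that $tb$ is unchanged, the target value $2m+1$ cannot be accounted for at all, since in your set-up $m$ is fixed throughout the induction.
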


\begin{figure}[H]
    \centering
    \includegraphics{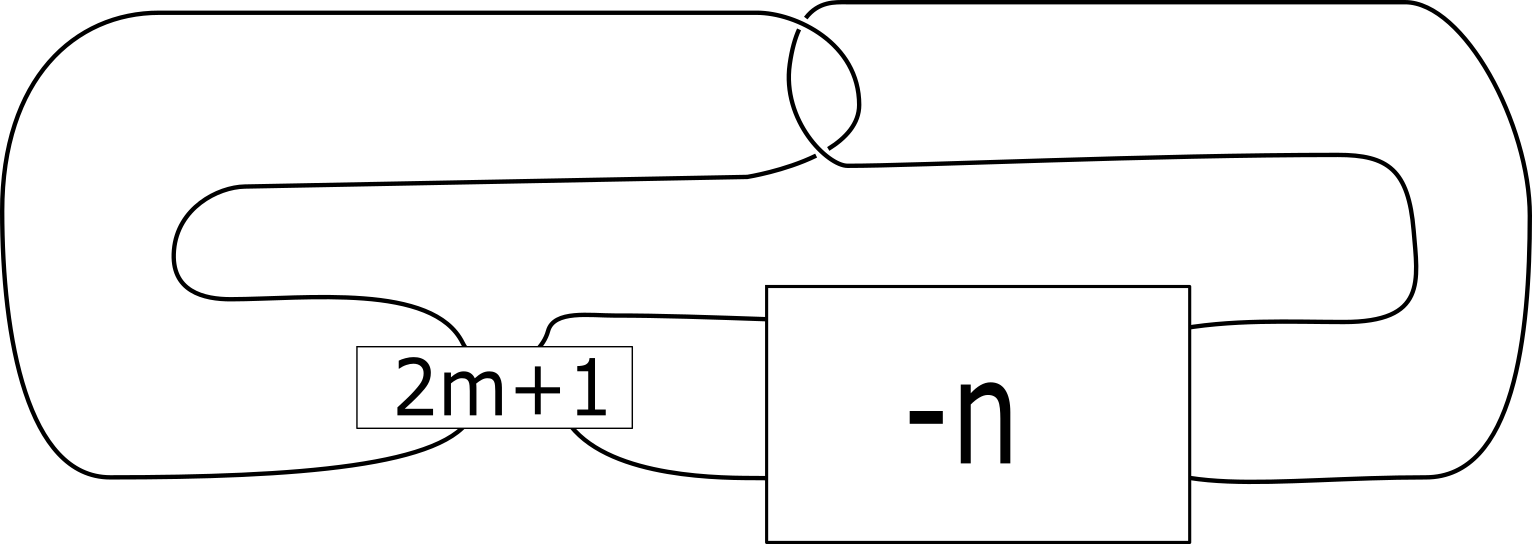}
    \caption{+1 means one right handed half twist and -1 means one left handed half twist. For  $m=1$ $n=5$ the result is $m(9_7)$; if $m=2$  $n=5$ we have $m(11a_{242})$; if  $m=1$ $n=7$ we have $m(11a_{246})$
    }
    \label{two bridge knot}
\end{figure}

\begin{proof}

Again in \citep[Theorem 1.3]{OSct} Ozsv\'ath and Stipsicz prove that the Eliashberg–Chekanov twist knot $E_n$ shown in figure \ref{twist knot} has $\lceil \frac{n}{4} \rceil$ many Legendrian representatives $E(k,l)$ (see figure \ref{Legendrian E_n}), moreover $E(k,l)$ and $E(k'.l')$ have different $[\mathfrak{\widehat{L}}]$ provided that $k,l,k',l'$ are odd, $k+l-1=k'+l'-1=n$, $k\geq l$, $k'\geq l'$, and $k\neq k'$.

So similar to the proof of Corollary \ref{cor 1.6}, to construct Legendrian $[-2m-1,-n,2]$ we just apply Theorem \ref{th 1.5}  $m$ times to all pairs of $E(k,l)$, in the Darboux ball represented by green circle in figure \ref{Legendrian E_n}. Notice that each time we apply the Theorem \ref{th 1.5} we add one full right handed twist to the green circle in figure \ref{twist knot}. So we still have $\lceil \frac{n}{4} \rceil$ many representative of the new knot $[-2m-1,-n,2]$ that have pairwise distinct $[\mathfrak{\widehat{L}}]$. Moreover, $E(k,l)$ all have $rot=0$ and $tb=1$, and adding a positive twist to $E(k,l)$ does not change the rotation number and adds two to the Thurston–Bennequin number. So all those new representative have the same $tb=2m+1$ and $rot=0$. Since these representatives are distinguished by $[\mathfrak{\widehat{L}}]$, their transverse push offs are also not transversely isotopic and have self-linking number $2m+1$.
\end{proof}

\begin{figure}[H]
    \centering
    \includegraphics{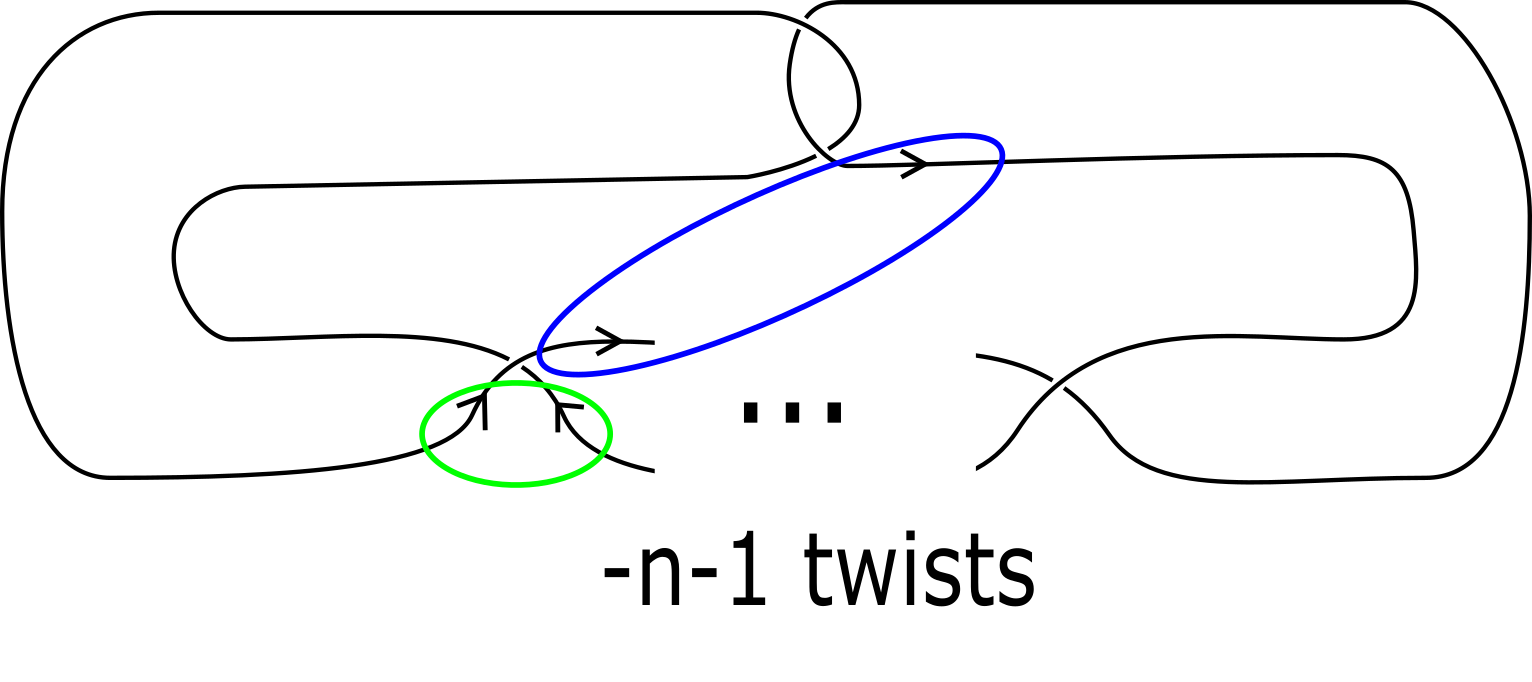}
    \caption{The Eliashberg–Chekanov twist knot $E_n$. The green (blue) circle smoothly corresponds to the green (blue) circle on the different Legendrian realizations of $E(k,l)$ of $E(n)$ in Figure \ref{Legendrian E_n}.}
    \label{twist knot}
\end{figure}

\begin{figure}[H]
    \centering
    \includegraphics[scale=0.88]{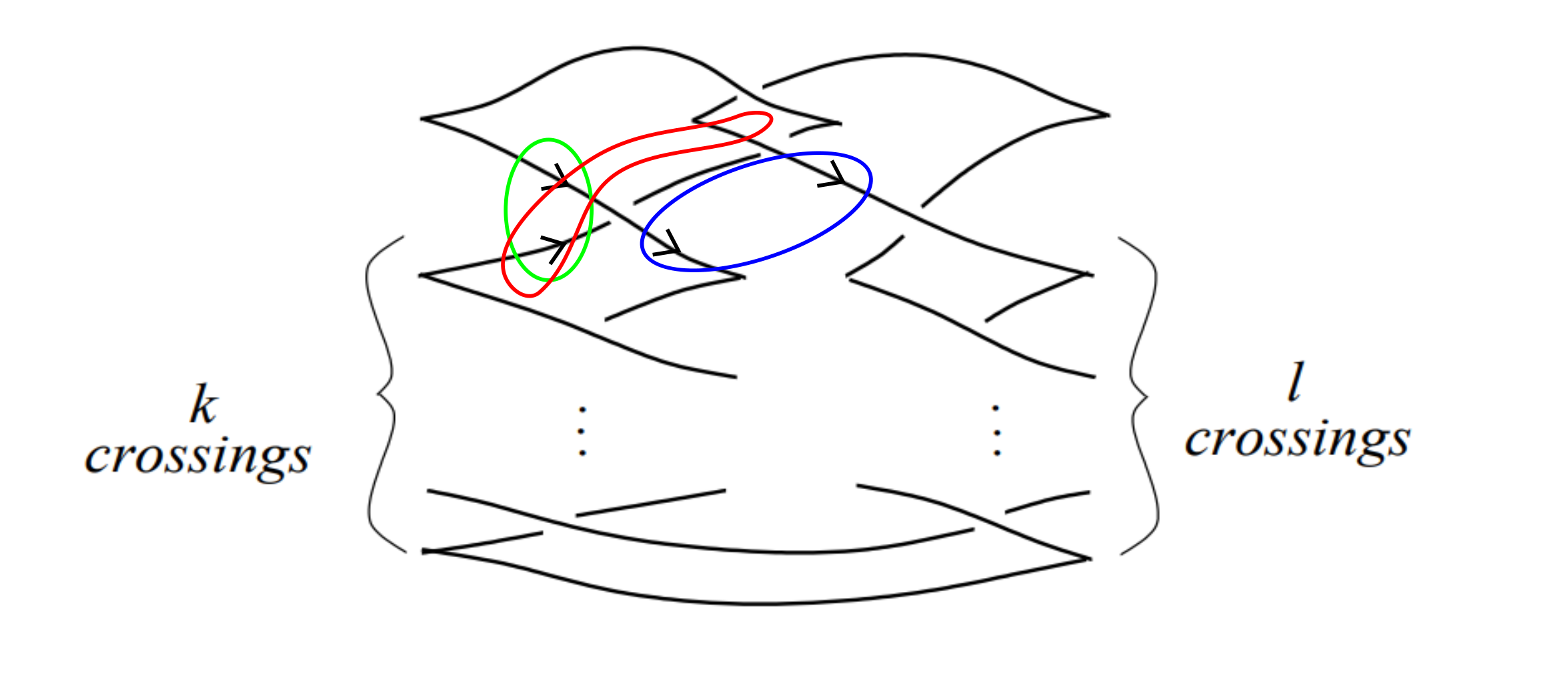}
    \caption{ The Legendrian knots $E(k,l)$, $k,l\geq 1$ odd. These knots are smoothly isotopic
to $E_n$, with $k+l-1 = n$. The green, blue or red circle indicates possible place where we can apply the Theorem \ref{th 1.5}. (Diagram from Figure 8 in \citep{OSct} )
    }
    \label{Legendrian E_n}
\end{figure}

Instead of applying Theorem \ref{th 1.5} to the green circle on $E(k,l)$, we can also apply it to the blue circle. The exact same proof applies, and it will give us a family of double twist knot that are non-simple.

\begin{theorem}

{\label{th 5.3}}
    Let $m,n$ be positive integers with $n > 3$ and odd. The double twist knot $K(2m+2,-n)$ (Figure \ref{double twist knot}, the knot $[-n,-2m-2]$ in Conway notation) has at least $\lceil \frac{n}{4} \rceil$ Legendrian (transverse) representatives that have  $tb=2m+1$ and $rot=0$ (self-linking number $2m+1$) and are pairwise not Legendrian (transverse) 
    isotopic.
\end{theorem}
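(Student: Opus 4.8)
\textbf{Proof proposal for Theorem \ref{th 5.3}.} The plan is to run the exact argument used for Theorem \ref{th 5.2}, but with the Darboux ball now placed at the blue circle of $E(k,l)$ in Figure \ref{Legendrian E_n} rather than the green one. First I would recall from \cite[Theorem 1.3]{OSct} that the Eliashberg--Chekanov twist knots $E(k,l)$, for $k,l$ odd with $k+l-1 = n$, $k\ge l$, give $\lceil n/4\rceil$ many pairwise distinct values of $[\mathfrak{\widehat{L}}]$ as $k$ varies. The key observation is that the blue circle meets all of these Legendrian diagrams in exactly the same local picture --- two parallel, coherently oriented strands --- so the data $(L,\sigma_2,B)$ with $B$ the blue Darboux ball forms a compatible triple for each $E(k,l)$, and the smooth isotopy between $E(k,l)$ and $E(k',l')$ can be taken to be the identity near the blue ball (and to send $e_i$ to $e_i'$, $B$ contactomorphically to $B'$), exactly as required by the hypotheses of Theorem \ref{th 1.5}.

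The steps, in order: (1) identify the compatible triple at the blue circle and check that the isotopy between any two $E(k,l)$, $E(k',l')$ respects it; (2) apply Theorem \ref{th 1.5} once to conclude that $L_\sigma$ and $L'_{\sigma'}$ are smoothly isotopic and still have distinct $[\mathfrak{\widehat{L}}]$ whenever the originals do; (3) iterate $m$ times --- at each stage the new diagrams again present two parallel coherent strands at the relevant ball, so Theorem \ref{th 1.5} applies again --- to obtain $\lceil n/4\rceil$ Legendrian representatives of the resulting knot with pairwise distinct $[\mathfrak{\widehat{L}}]$; (4) identify the smooth knot type: adding a full positive twist at the blue circle $m$ times to $E_n = [-n,-2]$ in Conway notation produces $[-n,-2m-2]$, i.e. the double twist knot $K(2m+2,-n)$; (5) track the classical invariants: each $E(k,l)$ has $tb=1$, $rot=0$, and by Lemma \ref{lemma 5.1} (and the discussion in the proof of Theorem \ref{th 5.2}) each positive full twist leaves $rot$ unchanged and raises $tb$ by $2$, so all representatives have $tb = 2m+1$, $rot = 0$; (6) pass to transverse push-offs, which are then pairwise non-transversely-isotopic with self-linking number $2m+1$.

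The main obstacle --- though really the only thing requiring genuine verification rather than bookkeeping --- is Step (1): confirming that the blue circle in Figure \ref{Legendrian E_n} genuinely meets the strands of $E(k,l)$ horizontally, in parallel, with matching orientation, for \emph{every} valid pair $(k,l)$, and that the smooth isotopy realizing $E(k,l)\simeq E(k',l')$ can be chosen to be the identity on a neighborhood of that ball. This is the same kind of check performed for the green circle in the proof of Theorem \ref{th 5.2} and for Corollary \ref{cor 1.6}, and it is visual: one inspects the standard front diagrams and notes that the $k,l$ parameters only affect the number of zig-zags in a region disjoint from the blue ball, so the local model at the blue ball is genuinely independent of $(k,l)$. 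Everything after Step (1) is a direct transcription of the proof of Theorem \ref{th 5.2}, with ``green'' replaced by ``blue'' and the Conway-notation identification of the resulting knot updated accordingly; no new input about $HFK$ or the LOSS invariant is needed.
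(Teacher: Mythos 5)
Your proposal is correct and matches the paper's argument exactly: the paper's proof of Theorem~\ref{th 5.3} is precisely to rerun the proof of Theorem~\ref{th 5.2} with the Darboux ball at the blue circle instead of the green one, iterating Theorem~\ref{th 1.5} $m$ times and updating the Conway-notation bookkeeping. Your breakdown of the steps (including the observation that the genuine content is confirming the blue circle gives a compatible triple independent of $(k,l)$) is a faithful, slightly more explicit rendering of what the paper leaves to the reader.
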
 

\begin{figure}[H]
    \centering
    \includegraphics{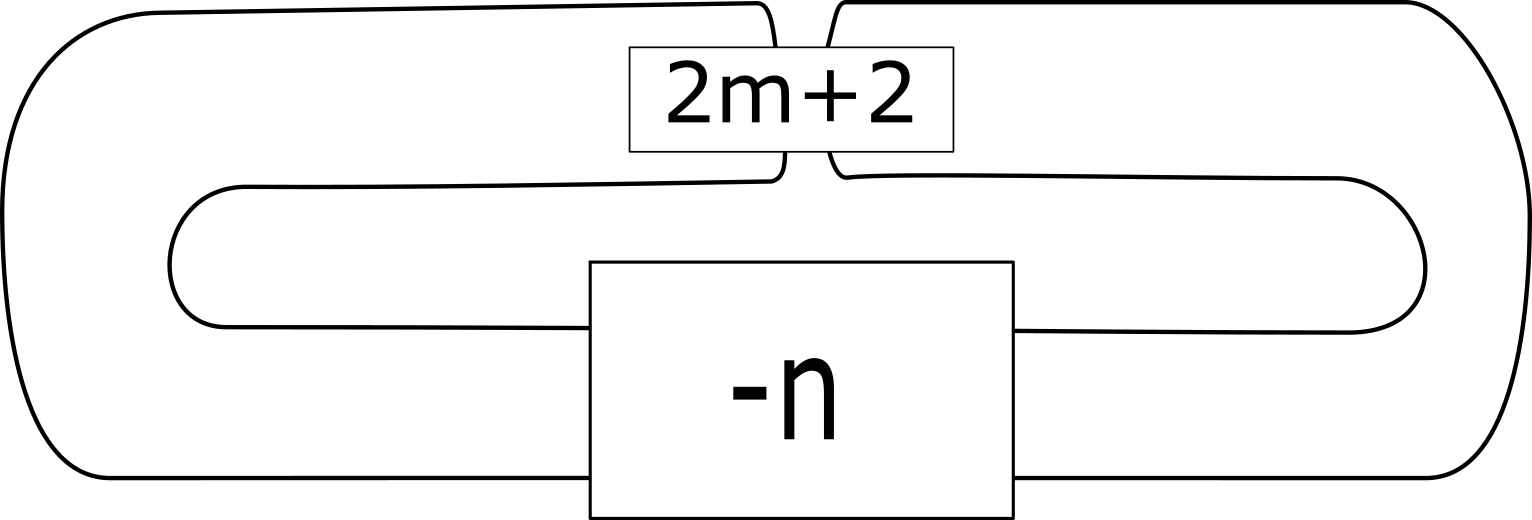}
    \caption{ The double twist knot $K(2m+2,-n)$, or in Conway notation $[-n,-2m-2]$. For $m=1$ $n=5$ this is $m(9,4)$; if $m=2$  $n=5$ we have $m(11a_{358})$;  if $m=1$ $n=7$ it is $m(11a_{342})$
    }
    \label{double twist knot}
\end{figure}

We remark that in \citep[Theorem 5.8]{OSct}  and \citep[Theorem 1.2]{Flt}  there are transverse non-simplicity statements about certain families of rational knots, and all the double twist in Theorem \ref{th 5.3} are included there. However, 
the knots $[-2m-1,-n,2]$ in Theorem \ref{th 5.2} are not. Moreover, instead of applying Theorem \ref{th 1.5} to individual green or blue circles, we can apply it to them simultaneously, or to the red circle on 3 arcs, or to some green, some blue and some red. Each of those gives different families of non-simple knots.

We can also apply our theorem to more complicated (non 2-bridge) non-simple knot. In \citep{NOTtk} Ng, Ozsv\'ath, and Thurston found many examples of non-simple knots. Those knots have pair of representatives $T_1$ and $T_2$ with same tb and rot but $\widehat{\theta}(T_1)=0$ and $\widehat{\theta}(T_2)\neq 0$, where $\widehat{\theta}$ is the Legendrian invariant living in grid homology  \citep{OSTltc}. It has been shown in \citep{BVVet} that $\widehat{\theta}$ is the same as $\mathfrak{\widehat{L}}$, so we can apply Theorem \ref{th 1.5} to those knots. For example the two knots in figure \ref{(2,3)(2,3)} are two Legendrian representatives of $(2,3)$ cable of $(2,3)$ torus knot with same tb and rot but different $\mathfrak{\widehat{L}}$ invariant, so if we apply Theorem \ref{th 1.5} to add twists in the circled region we will produce more non-simple knots. As we have seen, it's easy to produce a lot of infinite families of non-simple knots as long as we start with a non-simple ones that distinguished by $\mathfrak{\widehat{L}}$ invariants.

\begin{figure} [H]
    \centering
\includegraphics{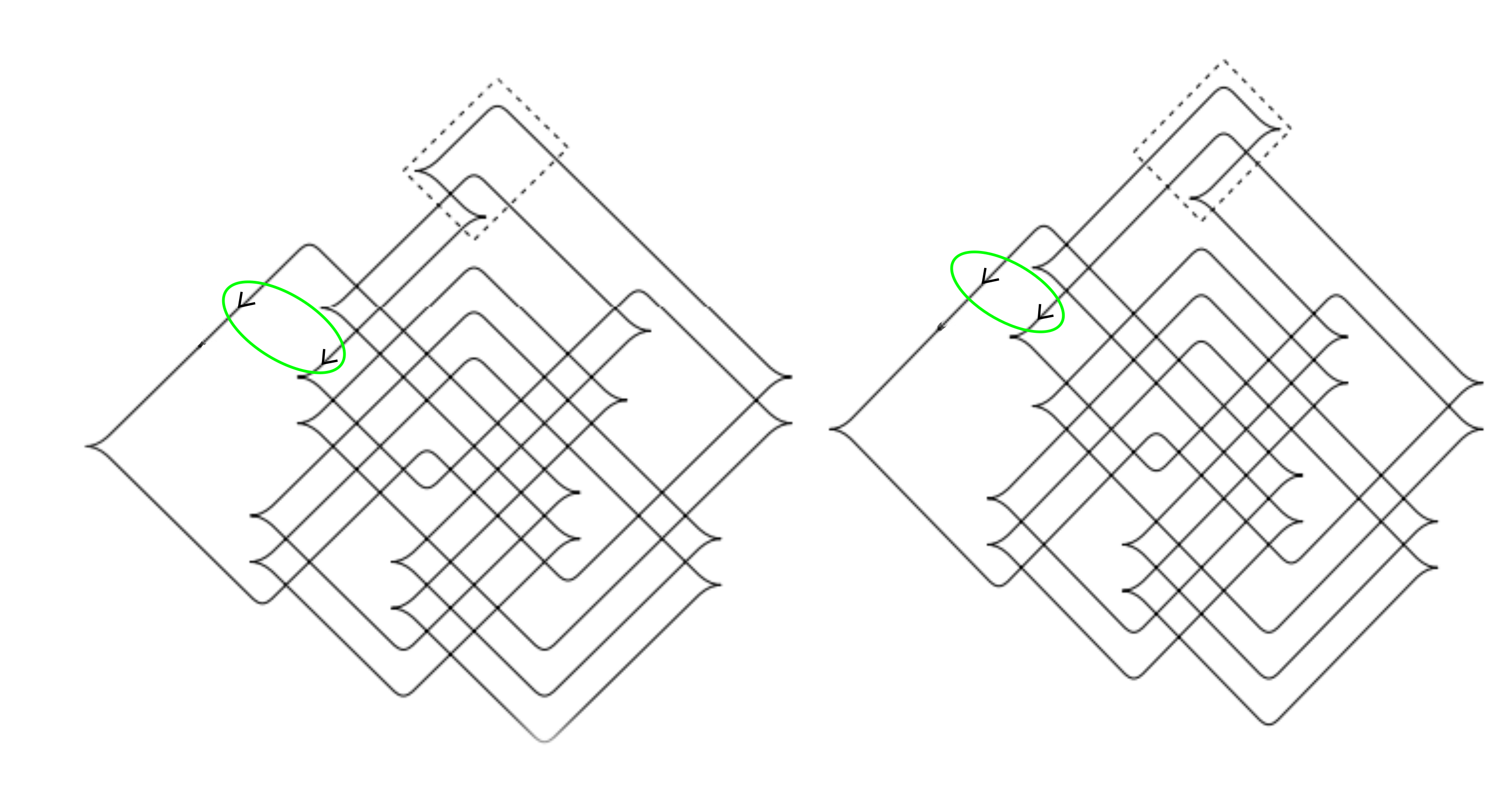}
    \caption{Legendrian fronts for $L_1$ (left) and $L_2$ (right), which are both
$(2, 3)$ cables of the $(2, 3)$ torus knot. They have same tb and rot ,but $\mathfrak{\widehat{L}}(L_1)=0$, $\mathfrak{\widehat{L}}(L_2)\neq 0$ (this diagram is from Figure 6 in \citep{OSTltc}; the dotted circle indicates the only region in which the diagrams differ).
    }
    \label{(2,3)(2,3)}
\end{figure}

\section{$Spin^c$ structure in the contact surgery corbordism} \label{Spin^c structure}

We start with the setting of $Y$ being a  rational homology sphere, $K$ an oriented rationally null-homologous knot with order $p$ which means $[K]$ in $H_1(Y,\mathbb{Z})$ has order $p$. Following the definition and convention in \citep{DingLiWuContact+1onrationalsphere}, a \textbf{rational Seifert surface} for smooth $K$ is a smooth map $i: F \rightarrow Y$ from a connected compact oriented surface
$F$ to $Y$ that is an embedding from the interior of $F$ into the exterior of  $L$, together with a  $p$-fold cover from
$\pa F$ to $K$. Let $N(K)$ be the closed neighborhood of $K$, and $\mu\subset \pa N(K)$ a meridian. We can assume that $i(F)\cap \pa N(K)$ consists of $c$ parallel
oriented simple closed curves such that each represents the same homology class $\nu$ in $H_1(\pa N(K), \mathbb{Z})$. Define the canonical longitude $\lambda_{can}$ be the longitude satisfying $\nu = t\lambda_{can} + r{\mu}$, where for simplicity the homology classes are also denoted by $\lambda_{can}$ and $\nu$ respectively, $t$ and $r$ are coprime integers with $0 \leq r < t$. In other words, $[\pa F]=c(t\lambda_{can} + r{\mu})$, and $\lambda_{can}$ is the choice of longitude for which $r/t$ is the unique representative of the rational self-linking number of $K$ in $[0, 1)$ (see also \citep{MTn} and \citep[Section 2.6]{RaouxTauinvariantsforknotsinrationalhomologysphere}).

If we perform smooth integer $n$ surgery along an order $p$ rationally null-homologous knot $K$, our convention is that the surgery coefficient is measured with respect to the canonical longitude $\lambda_{can}$, and we denote by $Y_n(K)$ the resulting manifold. Let $X_n(K)$ be the $4$-manifold
obtained by attaching a $4$-dimensional $2$-handle $H$ to $Y \times I$ along $K \times \{1\}$ where the coefficient $n$ is with respect to $\lambda_{can}$, in other words we have $\pa X_n(K) = (-Y) \cup Y_n(K)$. Let $C$  be the core of the $2$-handle in $X_n(K)$, with $\pa C = K \times \{1\}$. For a rational Seifert surface $i: F \rightarrow Y$ of $K$ we let $\Tilde{F}$ be the 2-cycle in $X_n(K)$ given by $\Tilde{F} = (i(F) \times \{1\}) \cup (-pC)$. We think of $\Tilde{F}$ as a ``capped off Seifert surface'', which is precisely true if $p=1$, i.e., the knot is null-homologous. Here $\Tilde{F}$ is merely a 2-chain; with more care one can construct a smooth representative of the class $[\Tilde{F}]\in H_2(X_n(K);\mathbb{Z})$ if desired.

Now we move on to an order $p$ Legendrian rationally null-homologous knot $L$ in some contact rational homology sphere $(Y,\xi)$. In \citep{BakerEtnyreRationallinking}, using rational Seifert surfaces Baker-Etnyre defined the rational Thurston-Bennequin number $tb_\mathbb{Q}(L)$ and rational rotation number $rot_\mathbb{Q}(L)$ for such a Legendrian knot $L$, and we refer the reader to \citep{BakerEtnyreRationallinking} for detailed definitions. Now, the Legendrian $L$ has a canonical framing $\lambda_c$ induced by the contact planes, which is given by $\lambda_c = \lambda_{can}+ k\mu$ for some $k$. In the null-homologous case $k$ is precisely the Thurston-Bennequin number, but in general we have the following. The definition in \citep{BakerEtnyreRationallinking} is that 
\[
tb_{\mathbb{Q}}(L):= \frac{1}{p} ([F] \cdot \lambda_c)= -\frac{1}{p} (p \lambda_{can}+cr\mu) \cdot  (\lambda_{can}+k\mu).
\]
(The sign appears because the intersection on the left happens in $\partial(Y - N(K))$ while on the right we work in $\partial N(K)$.) If we write $tb_{\mathbb{Q}}(L)= \frac{q}{p}$, then the above yields 
\begin{equation} \label{k(L) calculation}
    k=\frac{q+cr}{p}=tb_{\mathbb{Q}}(L)+\frac{cr}{p}.
\end{equation}

Observe that the number $k$ is uniquely determined by $L$, and we denote the corresponding $k$ as $k(L)$. Finally, note that performing $+n$-contact surgery on $L$ is given smoothly by $+n$ surgery with respect to contact framing $\lambda_c$, which is equivalent to doing $k+n$ surgery on $L$ with respect to the canonical framing $\lambda_{can}$.

We have the following naturality theorem of the contact invariant.

    \begin{theorem} \cite[Theorem 1.1] {MTn} \label{naturality of contact invariant}
    Let $L$ be an oriented rationally null-homologous Legendrian knot in a contact rational homology sphere $(Y,\xi)$ with non-vanishing contact invariant $c(\xi)$. Let $0<n \in \mathbb{Z}$ be the contact surgery coefficient, and $K$ be the smooth knot type of $L$. Let $W:Y \rightarrow Y_{k(L)+n}(K)$ be the corresponding rational surgery cobordism where $W=X_{k(L)+n}(K)$, and consider $\xi_{n}^-(L)$ on $Y_{k(L)+n}(K)$. There exist a $Spin^c$ structure $\mathfrak{s}$ on $-W$ such that the homomorphism 
        
        $$F_{-W,\mathfrak{s}}: \HFa(-Y) \rightarrow \HFa(-Y_{k(L)+n}(K))$$ 
        induced by $W$ with its orientation reversed satisfies 
        $$F_{-W,\mathfrak{s}}(c(\xi))= c(\xi_n^-(L)).$$
\end{theorem}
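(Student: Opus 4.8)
\textbf{Proof proposal for Theorem \ref{naturality of contact invariant} (sketch).}

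The plan is to reduce the contact-invariant statement to the already-established naturality result for the LOSS invariant (Theorem \ref{th 1.1}, via Proposition \ref{proposition 4.2}), exactly as the contact invariant is recovered from the LOSS invariant by forgetting the knot. Concretely, given the Legendrian $L$, first choose an abstract open book $(P,\phi)$ supporting $(Y,\xi)$ with $L$ sitting on a page and, after stabilizing as in \cite[Lemma 6.5]{BEHccm} and using Proposition \ref{3.3}, arrange that a negative stabilization $L^-$ is parallel to a binding component $B$. By Theorem \ref{3.4}, the cobordism $-W$ from $-Y$ to $-Y_{k(L)+n}(K)$ is the capping-off cobordism obtained by capping $B_L$ in the open book $(P_S,\phi_S)$ supporting $(Y_n^-(L),\xi_n^-(L))$, turned upside down. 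Now forget the auxiliary Legendrian $L$ entirely (we used it only to pin down the open book): the point is that for \emph{any} open book, the EH-class / contact invariant $c(\xi)$ in $\widehat{HF}(-Y)$ is represented by a specific generator $\mathbf{x}$ of the Honda--Kazez--Mati\'c type Heegaard diagram built from a basis $\{a_i\}$ of the page, and this is exactly the image of the LOSS-hat generator under the map $\widehat{HFK}(-Y,L)\to\widehat{HF}(-Y)$ induced by setting the knot filtration to zero (equivalently, by filling in the $z$ basepoint).

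Second, I would build the doubly pointed Heegaard triple $(\Sigma,\alpha,\gamma,\beta,z,w)$ subordinate to the capping-off cobordism exactly as in Section 4, but where the basis $\{a_i\}$ need only satisfy the condition that $B$ meets $a_2$ once and no other $a_i$ (the conditions involving $L$ can be dropped, or kept harmlessly). The combinatorial heart --- weak admissibility (Lemma \ref{Lemma 4.3}) and the uniqueness of the small-triangle contribution for the chosen $Spin^c$ structure $\mathfrak{s}$ (Lemma \ref{Lemma 4.4}) --- goes through verbatim, since those arguments only used the local pictures near the triangles $\Delta_i$ and near $\Delta_2$, and in particular only used the $z$ basepoint, never $w$. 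Hence $F_{B,\mathfrak{s}}(\mathbf{x'})=\mathbf{x}$, where $\mathbf{x'}$ represents $c(\xi')$ on the capped-off side and $\mathbf{x}$ represents $c(\xi)$; as in the proof of Proposition \ref{proposition 4.2}, identifying $\mathbf{x'}$ with the EH-generator of the capped-off open book uses the single $\alpha_2\cap\beta_2$ intersection to destabilize the Heegaard diagram, invoking \cite[Section 10]{OSht}. Translating back through Theorem \ref{3.4} and absorbing the negative stabilization (which does not change $c$) gives $F_{-W,\mathfrak{s}}(c(\xi))=c(\xi_n^-(L))$. Finally, to see this $\mathfrak{s}$ is the same one appearing in \cite[Theorem 1.1]{MTn}: both are characterized as the $Spin^c$ structure carried by the small-triangle homotopy class $\psi$ with domain $\sum\Delta_i$, so they literally coincide --- this is the observation that makes Proposition \ref{prop 1.4} applicable uniformly.

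The main obstacle I anticipate is \emph{not} the combinatorics (those lemmas transfer for free) but the bookkeeping around admissibility and coefficient rings in the minus-theory versus hat-theory: the statement of Theorem \ref{naturality of contact invariant} is in $\widehat{HF}$, for which weak admissibility suffices, so there is actually a simplification here relative to Section 4 --- but one must be careful that the destabilization isomorphism and the triangle-counting are compatible with the $U=0$ specialization and that the $Spin^c$ structure $\mathfrak{s}$ restricts correctly (torsion on each boundary, since $Y$ and $Y_n(K)$ are rational homology spheres). A secondary subtlety, flagged in the excerpt's remark, is the sign/conjugation ambiguity: to genuinely match the $\mathfrak{s}$ of \cite{MTn} \emph{on the nose} rather than up to conjugation, one must verify that the explicit triangle domain $\sum \Delta_i$ --- not its conjugate --- is what both constructions select, which is where the precise placement of $w$ relative to $a_1$ and $b_1$ (dictating the orientation of $L$, hence the orientation conventions on the cobordism) must be tracked even though $w$ plays no role in the contact-invariant computation itself. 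I would handle this by appealing to the explicit $Spin^c$-to-triangle correspondence of \cite[Proposition 8.4, Proposition 8.5]{OSht} together with the first-Chern-class computation of Section \ref{Spin^c structure}, which is exactly the content of Theorem \ref{thm: spin^c for RHS}.
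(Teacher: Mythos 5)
Your proposal is correct and follows the same route as the paper, which (in Remark 6.2) explains that this theorem is exactly the combination of Baldwin's naturality of the contact invariant under capping off (the contact-invariant analogue of Proposition \ref{proposition 4.2}, whose admissibility and triangle-uniqueness lemmas you correctly note use only the $z$ basepoint) with Theorem \ref{3.4} relating capping off to contact $+n$ surgery. The only quibble is your opening framing that one ``reduces'' to the LOSS naturality of Theorem \ref{th 1.1}: that theorem requires a second null-homologous Legendrian knot disjoint from the surgery curve, so a literal reduction would need an auxiliary knot plus a commuting forgetful map, whereas your actual argument (rightly) just reruns the Section 4 combinatorics with the $w$-basepoint and the auxiliary knot dropped --- which is precisely Baldwin's argument and is what the paper invokes.
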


\begin{remark}
    In \citep{MTn} the knot is assumed to be null-homologous, but the version stated above follows by the same arguments. In fact, even though we cite the above theorem from \citep{MTn}, naturality under $+n$ contact surgery actually comes from the combination of \citep[Theorem 1.2]{Bc} (naturality of the contact invariant under capping off cobordism) and Theorem \ref{3.4} (Equivalence of capping off and $+n$-contact surgery). Since we don't require the knot to be null-homologous in those theorems we have the rationally null-homologous version for the contact surgery. 
\end{remark}

What we want to characterize here is the $Spin^c$ structure mentioned in the above theorem without conjugation ambiguity. Note that in the following statement we use $y$ to denote the order of $L$ in homology instead of $p$, since elsewhere $p$ refers to the order of the induced knot in $Y_n(L)=Y_{k(L)+n}(K)$.

\begin{theorem} \label{thm: spin^c for RHS}
    In the above setting, assume $Y_n(L)$ is also a rational homology sphere. Then the $\mathfrak{s}$ in Theorem \ref{naturality of contact invariant} has the property that $$ \langle c_1(\mathfrak{s}),[\Tilde{F}] \rangle = 
      y(rot_\mathbb{Q}(L)+n-1) $$
 where $y$ is the order of $[L]$, $F$ is a rational Seifert surface for $L$ and $\Tilde{F}$ is the ``capped off" surface of $F$.
\end{theorem}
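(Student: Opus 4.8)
The plan is to compute $\langle c_1(\mathfrak s), [\Tilde F]\rangle$ directly from the Heegaard-triple description of the surgery cobordism that underlies Theorem \ref{naturality of contact invariant}, exactly as in the proof of Proposition \ref{proposition 4.2}. Recall from Section \ref{Contact surgery and Capping off cobordism} that $+n$ contact surgery on $L$ is capping off a binding component $B$ of a suitably stabilized open book, viewed upside down; here $B$ is parallel to a negative stabilization of $L$ on the page. After stabilizing so that $(L^{-},B,\mathbf{\{a_i\}})$ is adapted, we have the doubly pointed Heegaard triple $(\Sigma,\gamma,\beta,\alpha,w,z)$ and the canonical small triangle $\psi\in\pi_2(\mathbf x,\mathbf\Theta,\mathbf y)$ with domain $\sum_i\Delta_i$ that represents $\mathfrak s=\mathfrak s_z(\psi)$. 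The first step is to build an explicit representative of the homology class $[\Tilde F]\in H_2(W)$ out of the page data: the rational Seifert surface $F$ for $L$ sits on the page, and capping it off with (minus $p$ copies of) the core of the handle gives a $2$-cycle that is visible in the triple diagram as a periodic-domain-like region together with a piece of the handle core. Concretely $[\Tilde F]$ corresponds to a triply periodic domain $\mathcal P$ on $(\Sigma,\gamma,\beta,\alpha)$ (or rather a relative version of one, since the handle core contributes), obtained by taking the part of the page bounded by $F$.

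Second, I would invoke the standard first-Chern-class formula for cobordism maps: for a triangle-counting cobordism with $\mathfrak s=\mathfrak s_z(\psi)$ and a periodic domain $\mathcal P$ representing a class $H\in H_2(W)$,
\begin{equation*}
\langle c_1(\mathfrak s), H\rangle = \chi(\mathcal P) + \#\{\text{corners of }\mathcal P\} - 2\, n_{z}(\mathcal P) + 2\sum_{\text{pts}} (\text{local multiplicities of }\psi \text{ at the pts of }\mathcal P),
\end{equation*}
in the form given by Ozsv\'ath--Szab\'o (the analogue of the formula used to compute $c_1$ of a $Spin^c$ structure from a periodic domain, together with the dependence of $\mathfrak s_z(\psi)$ on $\psi$). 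The point is that both $\chi$ and the corner/multiplicity corrections are computable from the local pictures near the $\Delta_i$ that were already analyzed in Lemmas \ref{Lemma 4.3} and \ref{Lemma 4.4}: the only place the periodic domain $\mathcal P$ for $[\Tilde F]$ interacts nontrivially with the triangle $\psi$ is near $\Delta_1$ (which meets $L$, hence $F$) and near $\Delta_2$ (which meets $B$), while near all other $\Delta_i$ the contribution is zero. So the computation reduces to a finite, local count in the two model pictures of Figure \ref{Local Triangles}.

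Third, I would match the combinatorial output of that local count against the topological quantity $y(\rotq(L)+n-1)$. This is where the input from \citep{MTn} and the rational-rotation-number bookkeeping of Section \ref{Spin^c structure} enters: the term $y\cdot\rotq(L)$ should come from how $F$ (equivalently its page representative, equivalently the arc $a_1$) winds against the contact framing — precisely the defining relation $k(L)=\tbq(L)+cr/p$ and the analogous formula for $\rotq(L)$ in terms of the rotation of the Legendrian push-off — and the term $y(n-1)$ should come from the framing coefficient $k(L)+n$ of the handle attachment together with the $(-pC)$ in $\Tilde F=(i(F)\times\{1\})\cup(-pC)$, i.e. from $[\Tilde F]\cdot[\Tilde F]$-type contributions and the self-intersection of the handle core. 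An efficient way to organize the last identification is to first treat the null-homologous case (Proposition \ref{prop 1.4}, where $y=1$, $p=1$, $\rotq=rot$, and $\Tilde F$ is literally a closed surface), verify the formula $\langle c_1(\mathfrak s),[\Tilde F]\rangle = rot(S)+n-1$ there, and then run the same local computation keeping track of the factor $y$ coming from the $y$-fold cover $\partial F\to L$ and the shift $cr/p$ in the framing.

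\textbf{Main obstacle.} The hard part is the bookkeeping in the third step: producing the correct periodic-domain representative of $[\Tilde F]$ in the capped-off triple diagram (in particular keeping straight the orientation reversals from passing between $\partial N(K)$ and $\partial(Y-N(K))$, and the contribution of the $2$-handle core $-pC$ which is not literally part of the Heegaard surface), and then checking that the local Euler-characteristic-plus-corners count near $\Delta_1$ and $\Delta_2$ reproduces exactly $y\,\rotq(L)$ and not some conjugate/off-by-$n$ variant. This is precisely the place where \citep{MTn} and \citep{DingLiWuContact+1onrationalsphere} only obtained the answer up to conjugation (a sign ambiguity in $\langle c_1(\mathfrak s),[\Tilde F]\rangle$); the gain here is that the Heegaard triple is \emph{explicit} and the triangle $\psi$ is the \emph{specific} small triangle, so $\mathfrak s_z(\psi)$ is pinned down with no conjugation ambiguity, but one must be careful that every sign convention (orientation of $\Sigma$, of the $\Delta_i$, of $F$, of the cobordism) is used consistently throughout. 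I expect no essentially new ideas beyond those in Section \ref{section 5.2} and \citep{MTn}, only a careful, convention-consistent local computation.
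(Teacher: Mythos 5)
Your first two steps — describing the cobordism by a triple diagram from the open book, finding a periodic domain, and applying the Ozsv\'ath--Szab\'o Chern-class evaluation formula (Euler measure, boundary count, $n_z$, dual spider number) — are exactly the opening of the paper's proof of Theorem \ref{capping off spinc}(ii). But you have misidentified what that local computation produces, and the gap between what it gives and what Theorem \ref{thm: spin^c for RHS} asserts is where most of the actual work lies.

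The Heegaard triple $(\Sigma,\gamma,\beta,\alpha,z)$ is built from the page of the open book for $Y_n(L)$, not for $Y$. The two-chain $\mathfrak p$ with $\partial\mathfrak p=p[L']+\sum n_i(a_i-\phi_*(a_i))$ and $e(\mathfrak p)=p\,rot_{\mathbb Q}(L')$ that one reads off from the page (via the rationally-null-homologous versions of Lemmas 4.2 and 4.4 of \citep{OSct}) is a rational Seifert surface for the \emph{capped-off binding} $B_L$ in $Y_n(L)$, and $L'$ is the induced Legendrian push-off of $B_L$ living in $Y_n(L)$. So the triangle/periodic-domain computation yields $\langle c_1(\mathfrak s),[\Tilde V]\rangle=-p\bigl(rot_{\mathbb Q}(L')+1\bigr)$ — a formula about $\Tilde V$ and $rot_{\mathbb Q}(L')$, not about $\Tilde F$ and $rot_{\mathbb Q}(L)$. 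The surface $F$ of the original $L\subset Y$ is not visible on the page of the open book for $Y_n(L)$, so there is no ``periodic-domain representative of $[\Tilde F]$'' to construct directly as you propose. Converting requires two further, genuinely non-local ingredients that your outline does not contain: (1) a homological identification $[\Tilde F]=\pm[\Tilde V]$ with the sign depending on the sign of $x+ny$, proved by an intersection-number computation in the surgery cobordism (this is Lemma \ref{lem: homology class depend on smooth coefficient}, and it is precisely this sign that resolves the conjugation ambiguity); and (2) an expression of $rot_{\mathbb Q}(L')$ in terms of $rot_{\mathbb Q}(L)$, which in the paper is carried out by running the DGS algorithm, inverting the $n\times n$ rational linking matrix, and applying the rotation-number formula of Ding--Li--Wu. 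These are the steps that turn $-p(rot_{\mathbb Q}(L')+1)$ into $y(rot_{\mathbb Q}(L)+n-1)$; they are not, as you suggest, ``careful convention-consistent local computation'' near $\Delta_1$ and $\Delta_2$, and no amount of care with the local Euler measure will produce $rot_{\mathbb Q}(L)$ from a diagram that only sees $L'$.

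A secondary issue: the formula you quote uses $\chi(\mathcal P)$ and an informal ``sum of local multiplicities of $\psi$''; the correct quantities are the Euler measure $e(\mathcal P)$ and the dual spider number $\sigma(\psi,\mathcal P)$, and getting the latter to vanish (which it does here) requires choosing orientations of the $\alpha$, $\beta$, $\gamma$ curves and tracking the leftward push-offs carefully, as in the paper's Figure \ref{fig: dual spider}. That part of your plan is recoverable with care, but the missing $[\Tilde F]\leftrightarrow[\Tilde V]$ and $rot_{\mathbb Q}(L')\leftrightarrow rot_{\mathbb Q}(L)$ translations are the substantive gap.
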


Then Proposition \ref{prop 1.4} follows directly from the above theorem where $L$ is null-homologous in $Y$. To prove Theorem \ref{thm: spin^c for RHS} we again need to first prove the analogue theorem for the capping off cobordism. 

\begin{theorem} \label{capping off spinc}
Let $(P_{g,r},\phi)$ be an abstract open book with genus $g$ and $r>1$ binding components with a chosen transverse binding component $B$.Then capping off $B$ we get a new open book $(P_{g,r-1},\phi')$. Denote by $(M,\xi)$, $(M',\xi')$ the contact 3 manifolds corresponding to those two open books. The capping off cobordism gives rise to a map 
\begin{equation}
    F_{B,\mathfrak{s}}: \HFa (-M') \rightarrow \HFa (-M)
\end{equation}
where $\mathfrak{s}$ is a $Spin^c$ structure on the cobordism $W$ from $-M'$ to $-M$. Then
\begin{enumerate} [(i)]
    \item \citep[Theorem 1.2]{Bc} if $M'$ is a rational homology sphere, there is a choice of \s \ for which 
\begin{equation}
    F_{B,\mathfrak{s}}(c(\xi'))=c(\xi)
\end{equation}
holds.
\item  Assume also that $M$ is a rational homology sphere, and let $L$ be the Legendrian push off of $B$, realised as a curve on the page of the open book $(S_{g,r},\phi)$ that is parallel to the binding $B$. Then the $Spin^c$ structure \s mentioned in $(i)$ satisfies: $$ \langle c_1(\mathfrak{s}),[\Tilde{V}] \rangle = -p(rot_\mathbb{Q}(L)+1),$$
where $p$ is the order of $[B]$, and $\Tilde{V}$ is homology class in $W$ represented by the ``capped off'' rational Seifert surface $V$ of $B$. 
\end{enumerate}
\end{theorem}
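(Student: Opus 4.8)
The plan is to compute $\langle c_1(\mathfrak{s}),[\Tilde{V}]\rangle$ directly from the doubly pointed (here, singly pointed, since we are in the $\HFa$ setting rather than the knot Floer setting) Heegaard triple $(\Sigma,\gamma,\beta,\alpha,z)$ that is subordinate to the capping off cobordism $W:-M'\to -M$, using the formula of Ozsv\'ath--Szab\'o relating $c_1(\mathfrak{s}_z(\psi))$ to the first Chern class of a $Spin^c$ structure in terms of the domain of a triangle $\psi$ and Euler-measure/point-count data (\citep{OShi}, \citep{OSht}). The $Spin^c$ structure $\mathfrak{s}$ from part (i) is represented by the ``small triangle'' $\psi$ whose domain is $\sum_i \Delta_i$, exactly as in the construction preceding Proposition \ref{proposition 4.2} (with the role of $L$ there played by the Legendrian push off of $B$). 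So the first task is to recall/set up: $\mathfrak{s} = \mathfrak{s}_z(\psi)$, and then to evaluate $c_1$ against the specific class $[\Tilde{V}]$.

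The key geometric input is an explicit surface in $W$ representing $[\Tilde{V}]$ built from a rational Seifert surface $V$ of $B$, together with the core of the $2$-handle (capped off $p$ times). I would realise $V$ on the page: since $B$ is boundary-parallel to a distinguished binding, a rational Seifert surface for $B$ (or rather for the Legendrian push off $L$) can be described on $P_{g,r}$, and capping off $B$ then fills it in to a closed-up surface in the cobordism. Concretely, one expresses the relevant homology class as a periodic domain (or triply-periodic domain) $\mathcal{P}$ for the triple diagram and applies the first-Chern-class formula $\langle c_1(\mathfrak{s}_z(\psi)), H(\mathcal{P})\rangle = \chi(\mathcal{P}) + 2 n_z(\mathcal{P}) - (\text{sum of corner contributions along }\partial\mathcal{P})$, reading off the genus/Euler-characteristic contribution from the page data and the corner contributions from how the $\alpha$, $\beta$, $\gamma$ curves meet. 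The term $-p(rot_\mathbb{Q}(L)+1)$ should emerge as follows: the $\operatorname{rot}_\mathbb{Q}(L)$ piece comes from the relative Euler class of $\xi$ over $V$ (this is essentially the defining formula for the rational rotation number of a boundary-parallel curve, cf. \citep{BakerEtnyreRationallinking}), the $+1$ (equivalently $p$ against the multiplicity) comes from the framing/2-handle-core contribution — here it is the page framing (contact framing) contribution, which for a boundary-parallel Legendrian differs from $\lambda_{can}$ in a controlled way — and the overall factor $p = \operatorname{ord}[B]$ and the sign come from the $-pC$ term in $\Tilde{V} = (i(V)\times\{1\}) \cup (-pC)$ together with the orientation reversal $-M', -M$.

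The main obstacle I expect is bookkeeping of signs and of the factor $p$ in the rationally null-homologous setting: one has to be careful that (a) the surgery coefficient, the canonical longitude $\lambda_{can}$, and the contact framing $\lambda_c$ are related as in \eqref{k(L) calculation}, so the ``$+1$'' is really $+1$ and not, say, $k(L)+1$ minus a correction; (b) the rational rotation number is normalised consistently with the relative Euler class convention used in the Chern-class formula; and (c) the orientation reversal from $M'$ to $-M'$ flips a sign, which is exactly what must be tracked to beat the conjugation ambiguity present in \citep{MTn} and \citep{DingLiWuContact+1onrationalsphere}. Once the formula \eqref{capping off spinc} (ii) is established, Theorem \ref{thm: spin^c for RHS} follows by running the identification of contact $+n$ surgery with capping off (Theorem \ref{3.4}): stabilise so that $S^- = L^-$ is boundary-parallel to $B$, apply part (ii) to $(M',\xi') = (Y,\xi)$ and $(M,\xi) = (Y_n(L),\xi_n^-(L))$ (with the cobordism reversed), translate $\operatorname{rot}_\mathbb{Q}$ of the boundary-parallel curve back to $\operatorname{rot}_\mathbb{Q}(L)$ using invariance of $\operatorname{rot}_\mathbb{Q}$ under negative stabilization and the shift from the extra $n-1$ Dehn twists, and reconcile the two orders ($p$ versus $y$) via the surgery; Proposition \ref{prop 1.4} is then the null-homologous specialisation $y = 1$.
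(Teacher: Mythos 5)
Your high-level strategy is the same as the paper's: express $[\Tilde{V}]$ as a triply periodic domain on the Heegaard triple $(\Sigma,\alpha,\gamma,\beta,z)$ subordinate to the capping-off cobordism, apply Ozsv\'ath--Szab\'o's evaluation formula for $c_1(\mathfrak{s}_z(\psi))$ against the class of that domain, and trace the $\operatorname{rot}_{\mathbb{Q}}$ term to the Euler measure. However, as written the proposal has several gaps that are not just bookkeeping.

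First, the evaluation formula you need is the one for Heegaard \emph{triples} from \cite[Proposition 6.3]{OShi}, namely $\langle c_1(\mathfrak{s}_z(\psi)), H(D)\rangle = e(D) + \#(\partial D) - 2n_z(D) + 2\sigma(\psi,D)$. Your version has the wrong sign on $n_z$, conflates the Euler measure $e(D)$ with the topological Euler characteristic, and --- more importantly --- suppresses the dual spider number $\sigma(\psi,D)$ into ``corner contributions.'' In fact $\sigma(\psi,D)$ is exactly where the conjugation ambiguity could re-enter, and its vanishing for the small-triangle class $\psi$ has to be \emph{computed}, not assumed: one has to choose orientations on all $\alpha_i,\beta_i,\gamma_i$, push them off, and check locally near each $\Delta_i$ (with $\Delta_1$, where $\beta_1$ is the boundary-parallel curve, requiring separate treatment) that the contributions to $n_{\psi(u)}(D) + \partial_{\alpha'}D\cdot r + \partial_{\gamma'}D\cdot t + \partial_{\beta'}D\cdot s$ cancel. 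Without this step the sign of the answer is not pinned down.

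Second, the construction of the periodic domain $D$ with $H(D) = [\Tilde V]$ and $e(D) = -p\cdot\operatorname{rot}_{\mathbb{Q}}(L)$ is not automatic. It depends on two lemmas that must first be extended from the closed-curve/connected-binding setting of \citep{OSct} to the rationally null-homologous, multi-boundary setting: one must show that $p[L]$ is realized as $\phi_*(Z)-Z$ for some $Z\in H_1(P,\partial P)$ (arcs, not closed curves), and that any $2$-chain $\mathfrak p$ on $P$ with $\partial\mathfrak p = p[L] + (Z - \phi_*Z)$ has Euler measure $e(\mathfrak p) = p\cdot\operatorname{rot}_{\mathbb{Q}}(L)$. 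These are the rationally null-homologous analogues of \cite[Lemmas 4.2, 4.4]{OSct} and require a genuine argument (the paper proves them by positively stabilizing to reduce to the connected-binding case). Your proposal simply cites \citep{BakerEtnyreRationallinking} for the rotation number; that reference defines $\operatorname{rot}_{\mathbb{Q}}$ but does not by itself give the needed identification of $\operatorname{rot}_{\mathbb{Q}}$ with an Euler measure of a $2$-chain on the page.

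Third, your explanation of the ``$+1$'' in $-p(\operatorname{rot}_{\mathbb{Q}}(L)+1)$ --- attributing it to a ``framing/2-handle-core contribution'' and the discrepancy between the page framing and $\lambda_{can}$ --- is not where it comes from in this computation. It arises from the boundary-count term $\#(\partial D)$: since $\partial D = -p\beta_1 + \sum_i n_i(\alpha_i - \gamma_i)$, the $\alpha$ and $\gamma$ contributions cancel and $\#(\partial D) = -p$. No framing-correction argument is needed (nor is one readily available) once the domain has been set up correctly. This misattribution is worth flagging because, if one tried to literally implement a framing-correction argument, one would duplicate or cancel the wrong terms.

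Finally, deducing Theorem \ref{thm: spin^c for RHS} from part (ii) is not just a matter of ``translating $\operatorname{rot}_{\mathbb{Q}}$ of the boundary-parallel curve back'' as you suggest. You also need (a) a comparison of the two preferred generators $[\Tilde F]$ and $[\Tilde V]$ of $H_2(W;\mathbb{Q})$, whose relation carries a sign depending on $x+ny$ (worked out via an intersection-number computation using the DGS surgery link and a handle-slide argument, cf.\ Lemma \ref{lem: homology class depend on smooth coefficient}), and (b) an explicit conversion of $\operatorname{rot}_{\mathbb{Q}}(L')$ in the surgered manifold into $\operatorname{rot}_{\mathbb{Q}}(L)$, which requires inverting the rational linking matrix of the DGS link as in \cite[Lemma 4.1]{DingLiWuContact+1onrationalsphere}. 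These are separate computations that your sketch leaves out entirely.
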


To prove the second part of the above theorem we first need the following (rationally null-homologous version of) Lemmas from \citep[section 4]{OSct}. 

\begin{lemma} \label{lem: domain of L}
    Let $(P,\phi)$ be an abstract open book and $M(\phi)$ the corresponding $3$-manifold. Let $L$ be a homologically non-trivial closed curve on $P$, then $p[L]\in H_1(P)$ is in the kernel of $H_1(P)\rightarrow H_1(M(\phi))$ if and only if it can be written as $p[L] =\phi_*(Z)-Z$ for some $Z\in H_1(P,\pa P)$.
\end{lemma}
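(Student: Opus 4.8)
\textbf{Proof proposal for Lemma \ref{lem: domain of L}.}

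The plan is to prove both implications by unwinding the standard Mayer--Vietoris description of the homology of the relative mapping torus. Recall that $M(\phi)$ decomposes as the union of the mapping torus $P\times[0,1]/(x,1)\sim(\phi(x),0)$ and the binding neighborhood $\partial P\times D^2$, glued along $\partial P\times S^1$. First I would write down the Wang exact sequence (or equivalently the Mayer--Vietoris sequence) for the mapping torus $T_\phi=P\times[0,1]/\sim$, which reads
\begin{equation*}
H_1(P)\xrightarrow{\;\phi_*-\mathrm{id}\;}H_1(P)\longrightarrow H_1(T_\phi)\longrightarrow H_0(P)\xrightarrow{\;\phi_*-\mathrm{id}\;}H_0(P),
\end{equation*}
and then account for the effect of filling in the binding circles with solid tori, which kills exactly the meridian classes of the binding. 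The point is that a class $p[L]\in H_1(P)$ maps to zero in $H_1(M(\phi))$ precisely when its image in $H_1(T_\phi)$ lies in the subgroup generated by the meridians $\mu_i$ of the binding components; tracking this carefully should convert ``$p[L]$ dies in $M(\phi)$'' into an equation of the form $p[L]=(\phi_*-\mathrm{id})(Z)$ in $H_1(P)$ for a suitable relative class $Z$, where the relative (rather than absolute) group $H_1(P,\partial P)$ appears because the meridians of the binding are dual to the boundary components of $P$ and absorbing them is exactly what passing from $H_1(P)$ to $H_1(P,\partial P)$ records.

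Concretely, for the forward direction I would start from a $2$-chain in $M(\phi)$ bounding $p[L]$, put it in a normal form relative to the fibration $M(\phi)\setminus B\to S^1$ (intersecting the pages $P_t$ transversely), and read off from the ``horizontal'' part of its boundary a relative $1$-cycle $Z\in H_1(P,\partial P)$ with $\partial(\text{chain})\cap P = p[L] - (\phi_*Z - Z)$ up to boundary-parallel curves that are themselves nullhomologous after filling; this is the open-book analogue of the argument in \citep[Section 4]{OSct}. For the converse, given $p[L]=\phi_*(Z)-Z$ with $Z\in H_1(P,\partial P)$, I would build the bounding surface explicitly: take the trace of $Z$ under the monodromy, i.e. the image of (a surface representing) $Z$ swept through $P\times[0,1]$, whose boundary consists of $p[L]$, the two copies $Z\times\{0\}$ and $\phi(Z)\times\{1\}$ which cancel after the identification, and some arcs on $\partial P\times\{t\}$; the latter can be capped off using meridian disks in the solid tori $\partial P\times D^2$. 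This produces the desired $2$-chain in $M(\phi)$ with boundary $p[L]$.

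The main obstacle I expect is bookkeeping at the boundary: making precise how the relative class $Z\in H_1(P,\partial P)$ interacts with the meridian disks of the binding, and checking that the ``leftover'' boundary arcs on $\partial P\times S^1$ really do bound in the solid tori without introducing extra multiples of $[L]$ or of other binding components. One has to be careful that $Z$ is only well-defined in $H_1(P,\partial P)$ (changing the representative of $Z$ by something in the image of $H_1(\partial P)$ changes $\phi_*Z-Z$ by a nullhomologous combination of binding-parallel curves), and that the factor $p$ is handled consistently with the order of $[L]$ in $H_1(M(\phi))$. Once the Mayer--Vietoris/Wang sequence is set up correctly and the boundary arcs are matched with meridian disks, the rest is a routine diagram chase, and this lemma will then feed into the computation of $\langle c_1(\mathfrak{s}),[\tilde V]\rangle$ in Theorem \ref{capping off spinc}(ii) by providing an explicit $2$-chain whose class is $[\tilde V]$.
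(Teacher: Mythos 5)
Your proposal is correct in spirit but takes a genuinely different route from the paper. The paper's proof reduces to the connected-binding case, citing \citep[Lemma 4.2]{OSct}, by positively stabilizing the open book to join the binding components into one, applying that lemma on the stabilized page $P^+$, and then carefully choosing a basis of $H_1(P^+)$ (with $l_1$ the new handle curve and $l_2$ boundary-parallel) to show the coefficient of $l_1$ in $Z^+$ can be absorbed into a relative class on the original page; the multi-boundary case then follows by induction. You instead argue directly on $M(\phi) = T_\phi \cup (\partial P\times D^2)$: the Wang sequence identifies $\mathrm{coker}(\phi_*-\mathrm{id})$ with a subgroup of $H_1(T_\phi)$, Mayer--Vietoris shows that $H_1(M(\phi))$ is $H_1(T_\phi)$ modulo the binding meridians, and the key observation (which you would need to make explicit) is that a difference of two meridians $\mu_i-\mu_j$ is homologous in $T_\phi$ to $\iota_*(a-\phi_*(a))$ for an arc $a$ in $H_1(P,\partial P)$ joining the corresponding boundary components --- this is precisely what forces the passage from absolute to relative first homology of the page, and combined with the constraint $\sum n_i = 0$ (from intersection with the page) a short diagram chase gives the lemma. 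Your explicit $2$-chain construction for the converse (sweeping a relative cycle $Z$ around the mapping torus and capping the resulting meridian circles with meridian disks of the binding solid tori) is also sound. The trade-off is that your argument is self-contained and geometrically transparent but requires the boundary bookkeeping you flag, whereas the paper's stabilization trick is shorter given the pre-existing connected-binding lemma. Both are valid proofs.
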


\begin{proof}
    The case for the connected binding is proved in \citep[Lemma 4.2]{OSct}, where in the connected binding case $Z$ is actually an absolute class in $H_1(P)$. Now we suppose $(P,\phi)$ has two binding components. To make the proof clear we first pick useful basis representatives for $H_1(P)$ as follows. We let $l_2$ be a curve that is parallel to one of the boundary components of $P$, then we pick curves $l_i$ for $3\leq i\leq k$ such that they are disjoint from $l_2$ and form a standard symplectic basis for $H_1(P/l_2)$. Then it is clear that $\{[l_2],[l_3],...,[l_k]\}$ form the basis of $H_1(P)$.
    
    Now we positively stabilize the open book by attaching a $1$-handle between the two binding components to obtain a new page $P^+$. Then $(P^+,\phi^+)=(P^+, \phi \circ \tau_c)$ is an open book with connected binding where $\tau_c$ is a right handed Dehn twist along a curve $c$ in $P^+$ that intersects both the co-core of the $1$-handle and $l_2$ exactly one time but misses all the other $l_i$. Then from the one-boundary case we know $p[L]=\phi_*^+(Z^+)-Z^+$ for some $Z^+\in H_1(P^+)$. We view $P$ as a subsurface of $P^+$, and let $l_1=c$ where the orientation is chosen so that $l_1 \cdot l_2=1$; then $\{[l_1],[l_2],...,[l_k]\}$ is a basis for $H_1(P^+)$. First we have the following observations regarding this basis: \begin{enumerate}
        \item If we let $\overline{l_1}=l_1 \cap P$ then $\overline{l_1} \in H_1(P,\pa P)$, and moreover $\phi^+_*(l_1)-l_1=\phi_*(\overline{l_1})-\overline{l_1}$ is an absolute class on $H_1(P)$.
        \item $\phi^+_*(l_2)-l_2=c=l_1$
        \item $\phi^+_*(l_i)-l_i=\phi_*(l_i)-l_i$  which is also an absolute class on $H_1(P)$, for $i\neq 1, 2$. 
        \item For any absolute class $[O]\in H_1(P)$, $O \cdot l_2=0$.
    \end{enumerate}

Using the above basis we write $Z^+=\sum_{i=1} u_i l_i$ and $\phi^+_*(Z^+)-Z^+=\sum_{i=1}v_il_i$. We can also express $\phi^+_*(Z^+)-Z^+$ as $\sum_{i=1}u_i(\phi^+_*(l_i)-l_i)$, and we claim $v_1=u_2$. This is because the coefficient $v_1$ of $l_1$ is the same as the intersection number between $\phi^+_*(Z^+)-Z^+=\sum_{i=1}u_i(\phi^+_*(l_i)-l_i)$ and $l_2$. The observations above imply $(\phi^+_*(l_2)-l_2) \cdot l_2=1$, and $(\phi^+_*(l_i)-l_i) \cdot l_2=0$ for $i\neq 2$. Thus $(\phi^+_*(Z^+)-Z^+) \cdot l_2 =u_2$ which proves the claim.

Our assumptions tell us $L\subset P$ which implies $[L] \cdot l_2 =0$, and we also know that $p[L]=\phi_*^+(Z^+)-Z^+$, so $(\phi_*^+(Z^+)-Z^+) \cdot l_2=0$ thus we conclude $u_2=0$. Using the observation $1$ and $3$, we can express $p[L]=\phi_*(Z)-Z$, where $Z=u_1\overline{l_1}+\sum_{i=3} u_il_i \in H_1(P,\pa P)$. 

For open book with more than two binding components the above argument easily extends inductively.
\end{proof}

\begin{lemma} \citep[Lemma 4.4]{OSct} \label{lem: rotation and euler measure}
Let $L\in M (\phi)$ be an order $p$ rationally null-homologous Legendrian knot supported in the page $P$ of the open book (page framing equals to contact framing). Let $\mathfrak{p}$ be a two-chain with $\pa \mathfrak{p}= p[L]+(Z-\phi_*(Z))$ for some one–cycle $Z\in H_1(P,\pa P)$. Then, $p \cdot rot_{\mathbb{Q}}(L)$ is equal to $e(\mathfrak{p})$, the Euler measure of $\mathfrak{p}$ (see \cite[section 7.1]{OzsvathSzaboHDandthreemanifoldinvariants} for details about Euler measure).
\end{lemma}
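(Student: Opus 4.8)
\textbf{Proof proposal for Lemma \ref{lem: rotation and euler measure}.}

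The plan is to reduce this to the corresponding computation for the connected-binding case, which is exactly \cite[Lemma 4.4]{OSct}, using the stabilization trick already deployed in the proof of Lemma \ref{lem: domain of L}. Concretely, I would first positively stabilize the open book $(P,\phi)$ by attaching $1$-handles between binding components so as to obtain an open book $(P^+,\phi^+)$ with \emph{connected} binding, while arranging that $L$ still lies on the page $P^+$ (as a curve supported in the subsurface $P$) and that its page framing still equals its contact framing; note that positive stabilization changes neither the supported contact structure $(M,\xi)$ nor the transverse/Legendrian isotopy class of $L$, nor $rot_{\mathbb{Q}}(L)$. So the left-hand side of the desired identity is unaffected by the stabilization.

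Next I would track how the two-chain $\mathfrak{p}$ behaves under stabilization. Given $\mathfrak{p}$ with $\partial \mathfrak{p} = p[L] + (Z - \phi_*(Z))$ for $Z\in H_1(P,\partial P)$, the computation in Lemma \ref{lem: domain of L} shows how to promote a relative class on $P$ to an absolute class on $P^+$: there is $Z^+\in H_1(P^+)$ with $p[L] = \phi^+_*(Z^+) - Z^+$, and the discrepancy between $Z^+$ and $Z$ is accounted for by the new handle curves $l_1 = c$ (the Dehn-twist curves introduced in the stabilization). I would build a corresponding two-chain $\mathfrak{p}^+$ in $P^+$ with $\partial \mathfrak{p}^+ = p[L] + (Z^+ - \phi^+_*(Z^+))$ by gluing in the obvious bigon/annular pieces supported near the attaching $1$-handles. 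The key point to verify is that the Euler measure is additive under this gluing and that the extra pieces contribute zero Euler measure — each new region is (up to the relevant combinatorics) a disk with two or four corners arranged so that its Euler measure vanishes, exactly as one expects for the "small" regions created by a stabilization. Hence $e(\mathfrak{p}^+) = e(\mathfrak{p})$.

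Then I apply the connected-binding case \cite[Lemma 4.4]{OSct} to $(P^+,\phi^+)$, $L$, and $\mathfrak{p}^+$, which gives $p\cdot rot_{\mathbb{Q}}(L) = e(\mathfrak{p}^+) = e(\mathfrak{p})$, as desired. For open books with more than two binding components one stabilizes repeatedly, exactly as in the inductive step at the end of the proof of Lemma \ref{lem: domain of L}.

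\textbf{Main obstacle.} The delicate point is not the homological bookkeeping (that is essentially copied from Lemma \ref{lem: domain of L}) but the \emph{geometric} claim that $\mathfrak{p}$ and $\mathfrak{p}^+$ can be chosen with equal Euler measure — i.e., that the regions introduced by the stabilization, and the modification of $Z$ into $Z^+$, can be realized by two-chains whose Euler measures cancel. One must be careful that $Z$ is only a relative cycle (living in $H_1(P,\partial P)$), so "$Z - \phi_*(Z)$" as the boundary of $\mathfrak{p}$ must be interpreted correctly near $\partial P$, and the corner contributions to the Euler measure at the binding must be handled with the conventions of \cite[Section 7.1]{OzsvathSzaboHDandthreemanifoldinvariants}. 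I expect that with the right normalization of $\mathfrak{p}^+$ near the new $1$-handles the bookkeeping works out cleanly, but this is where the real content of the lemma lies.
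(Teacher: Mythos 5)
Your strategy---positively stabilize $(P,\phi)$ to a connected-binding open book and then invoke \citep[Lemma 4.4]{OSct}---is exactly the paper's. The paper sidesteps your flagged ``main obstacle'': the identity $\phi^+_*(l_1)-l_1=\phi_*(\overline{l_1})-\overline{l_1}$ established in the proof of Lemma \ref{lem: domain of L} shows that $Z-\phi_*(Z)$ already equals $Z^+-\phi^+_*(Z^+)$ as $1$-chains (not merely in homology), so the inclusion of $\mathfrak{p}$ into the stabilized page serves verbatim as your $\mathfrak{p}^+$ and the Euler measure is trivially unchanged; no bigon or annular gluing near the new $1$-handles is required.
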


\begin{proof}
    This is more straight forward than the previous lemma. We start with the same set up as above let $(P,\phi)$ be the open book with multiple binding components and $(P',\phi')$ be the positive stabilization of $(P,\phi)$ with connected binding (note positive stabilization does not change the contact structure, so when we view $P$ as subsurface of $P'$ the Legendrian knot $L$ sitting on $P'$ is Legendrian isotopic to the one sitting on $P$). \citep[Lemma 4.4]{OSct} says any two chain $\mathfrak{p'}$ with $\pa \mathfrak{p'}= p[L]+(Z'-\phi_*(Z'))$, where $Z'\in H_1(P')$ is the cycle in the proof of the above lemma, satisfies $e(\mathfrak{p'})=p \cdot rot(L)$.

    Then according to the proof of the above lemma it's clear that the corresponding two chain $\mathfrak{p}=p[L]+(Z-\phi_*(Z))$, where $Z\in H_1(P, \pa P)$ as described above, gives rise to such a $\mathfrak{p'}$ when we include $\mathfrak{p}$ into $P'$. Thus the lemma follows.
\end{proof}

For a rationally null-homologous Legendrian $L$ on page $P$ with order $p$, we Let $\{a_1, ..., a_k \}$ be a basis for $H_1(P, \pa P)$ with the property that $a_i$ are embedded \textbf{arcs} in $P$, $a_2, ..., a_k$
are disjoint from $L$ and $a_1$ meets $L$ in a single transverse intersection point with the convention $L \cdot a_1=+1$. For $Z\in H_1(P,\pa P)$ and two-chain $\mathfrak{p}$ we found in the above two lemmas, using above basis we can rewrite them as  \[Z=\sum_{i=1}^k n_i \cdot a_i\text{, and } \pa \mathfrak{p} = p[L] + \sum_{i=1}^k n_i \cdot (a_i-\phi_*(a_i)).\] 


Now we are ready to proceed the proof of Theorem \ref{capping off spinc}. We first recall the setting of the Heegaard triple diagram $(\Sigma, \alpha, \gamma, \beta, z)$ that describe the capping off cobordism from $M$ to $M'$, for simplicity we let $P=P_{g,r}$, and $P'=P_{g,r-1}$. 

$\Sigma$ is the Heegard surface of the union of two pages $P_{+1} \cup -P_{-1}$. Let arcs $\{a_1, ..., a_k \}$ be a basis for $P_{+1}$ with the property that $a_2, ..., a_k$
are disjoint from $L$ and $a_1$ meets $L$ in a single transverse intersection point (note this is the exactly same basis we used for $Z$ and $\mathfrak{p}$), where $L$ is the Legendrian push off of the binding $B$ on $P_{+1}$. We let $c_i$ be a push off of $a_i$ for all $i$, $b_i$ be a further push off of $c_i$ for $i\neq 1$, and $b_1$ be the parallel push of of binding $B$ on $P_{+1}$. Then in particular we will have a triangle $\Delta_i$ 
formed by $a_i$, $c_i$ and $b_i$ for all $i$ (see Figure \ref{Oriented triple}). 

Now we let $\alpha_i=a_i \cup \overline{a_i}$ and $\gamma_i=c_i\cup \overline{\phi(c_i)}$ for all $i$. For the $\beta$ curves let $\beta_i=b_i\cup \overline{\phi(b_i)}$ for $i\neq 1$, and $\beta_1=b_1$. For base point $z$, we put it outside the thin strips region between the arcs. Then the manifolds $M$ and $M'$ are represented by $(\Sigma,\alpha,\gamma)$ and $(\Sigma,\alpha,\beta)$ respectively. The pointed Heegaard triple $(\Sigma,\alpha,\gamma,\beta,z)$ describes the capping off cobordism from $M$ to $M'$, and $(\Sigma, \gamma,\beta, \alpha,z)$ is the opposite cobordism from $-M'$ to $-M$. This is exactly the same setting in section 4 but ignoring the extra Legendrian knot. 

According to \cite{Bc}, the small triangle we formed by $a_i$, $c_i$, $b_i$ is representing the $Spin^c$ structure in Theorem \ref{capping off spinc} ($i$), since $(\Sigma,\alpha,\gamma,\beta,z)$ and $(\Sigma, \gamma,\beta, \alpha,z)$ represent the same $4$-manifold $W$ we will calculate this $Spin^c$ structures using $(\Sigma,\alpha,\gamma,\beta,z)$. 

To calculate the first Chern class we will use the formula in \cite[Proposition 6.3]{OShi}. If we let $\psi: \Delta \rightarrow Sym^k(\Sigma)$ be the Whitney triangle correspond to the domain consists of little triangles  $\Delta$ we are interested in, $D$ a triply periodic domain representing the two-dimensional homology class $H(D)  \in H_2(W,\mathbb{Q})$, then \begin{equation} \label{c_1 of triangle}
        \langle c_1(\mathfrak{s}_z(\psi),H(D) \rangle = e(D)+\#(\pa D)-2n_z(D)+2\sigma(\psi,D),
    \end{equation}
where $e(D)$ is the Euler measure of $D$, $\#(\pa D)$ is the coefficient sum of all terms in $\pa D$, and $\sigma(\psi,D)$ is the dual spider number. The dual spider number $\sigma(\psi,D)$ can be calculated as follows: 

We first choose an orientation on $\alpha$, $\gamma$ and $\beta$\, we let $\alpha'$, $\gamma'$ and $\beta'$ be the leftward push offs of the corresponding curve. Let $\pa_{\alpha'}(D)$, $\pa_{\gamma'}(D)$ and $\pa_{\beta'}(D)$ be the $1$-chains obtained by translating the corresponding components of $\pa D$. Let $u$ be an interior point of $\Delta$ so that $\psi(u)$ misses $\alpha$, $\gamma$, $\beta$ curves, then choose
three oriented paths $r$, $t$ and $s$, from $u$ to the  $\alpha$, $\gamma$, $\beta$ boundaries respectively such that $r$, $t$ and $s$ are in the 2-simplex $\Delta$ that is the domain of $\psi: \Delta \rightarrow Sym^k(\Sigma)$. Identifying these arcs with their image $1$-chain in $\Sigma$, the dual spider number is given by
\begin{equation}
    \sigma(\psi,D)=n_{\psi(u)}(D)+\pa_{\alpha'}(D) \cdot r+ \pa_{\gamma'}(D)\cdot t + \pa_{\beta'}(D)\cdot s
\end{equation}

Now we are ready to prove the  Theorem \ref{capping off spinc} ($ii$)

\begin{proof} [Proof of Theorem \ref{capping off spinc} ($ii$)]
We first need to identify a triply periodic domain in the triple diagram. 

Recall that we have a domain $\mathfrak p$ in the page $P$ with $\pa \mathfrak{p} = p[L] + \sum_{i=1}^k n_i \cdot (a_i-\phi_*(a_i))$ and $e(\mathfrak{p})= p\cdot rot_{\mathbb{Q}}(L)$. Consider $\mathfrak p$ as lying on $P_{-1}$, and $P_{-1}$ lying in $\Sigma=P_{+1}\cup -P_{-1}$. With this point of view we write $\poverline$ instead as the domain ${\mathfrak{p}}$ on $\Sigma$ with
\[\pa \overline{\mathfrak{p}}= p[\overline{L}]+\sum_{i=1}^k n_i \cdot (\overline{a_i}-\phi_*(\overline{a_i})), \] where we are using $\overline{L}$ and $\overline{a_i}$ to mean the images of $L$ and $a_i$ (considered on $P_{+1}$) under identity map on the opposite page $-P_{-1}$. Then $\overline{\mathfrak{p}}$ satisfies 
$e(\overline{\mathfrak{p}})=-p\cdot rot_{\mathbb{Q}}(L)$. 

We then observe that the cycle $(\alpha_i-\gamma_i)$ is exactly $ (\overline{a_i}-\phi_*(\overline{a_i}))$, and thus when we push $\overline{L}$ across $B$ to $L$ (from $-P_{-1}$ to $P_{+1}$) we obtain a corresponding domain (still denoted $\overline{\mathfrak{p}}$) on $\Sigma$ with 
$$\pa \overline{\mathfrak{p}}= p[L]+\sum_{i=1}^k n_i \cdot (\alpha_i-\gamma_i) $$ such that $e(\overline{\mathfrak{p}})=-p\cdot rot_{\mathbb{Q}}(L)$. 

Now since $L$ is the Legendrian push off of the binding $B$ its orientation coincides with that of $B$, which is compatible with the orientation of the page (the page $P_{+1}$ is oriented counter clockwise). Thus the arc $a_1$ is oriented by the requirement $L.a_1 = +1$, which induces natural orientations on  $\alpha_1$ and $\gamma_1$. We manually give an orientation to $\beta_1$ that is opposite to the orientation of $L$, and for the orientation of the rest curves we make choice such that near each little triangle $\Delta_i$ for $i\neq 1$, it looks like $\Delta_2$ as shown in Figure \ref{Oriented triple}. We will use those orientation for calculation. 

\begin{figure} 
    \centering
\includegraphics[width=\textwidth]{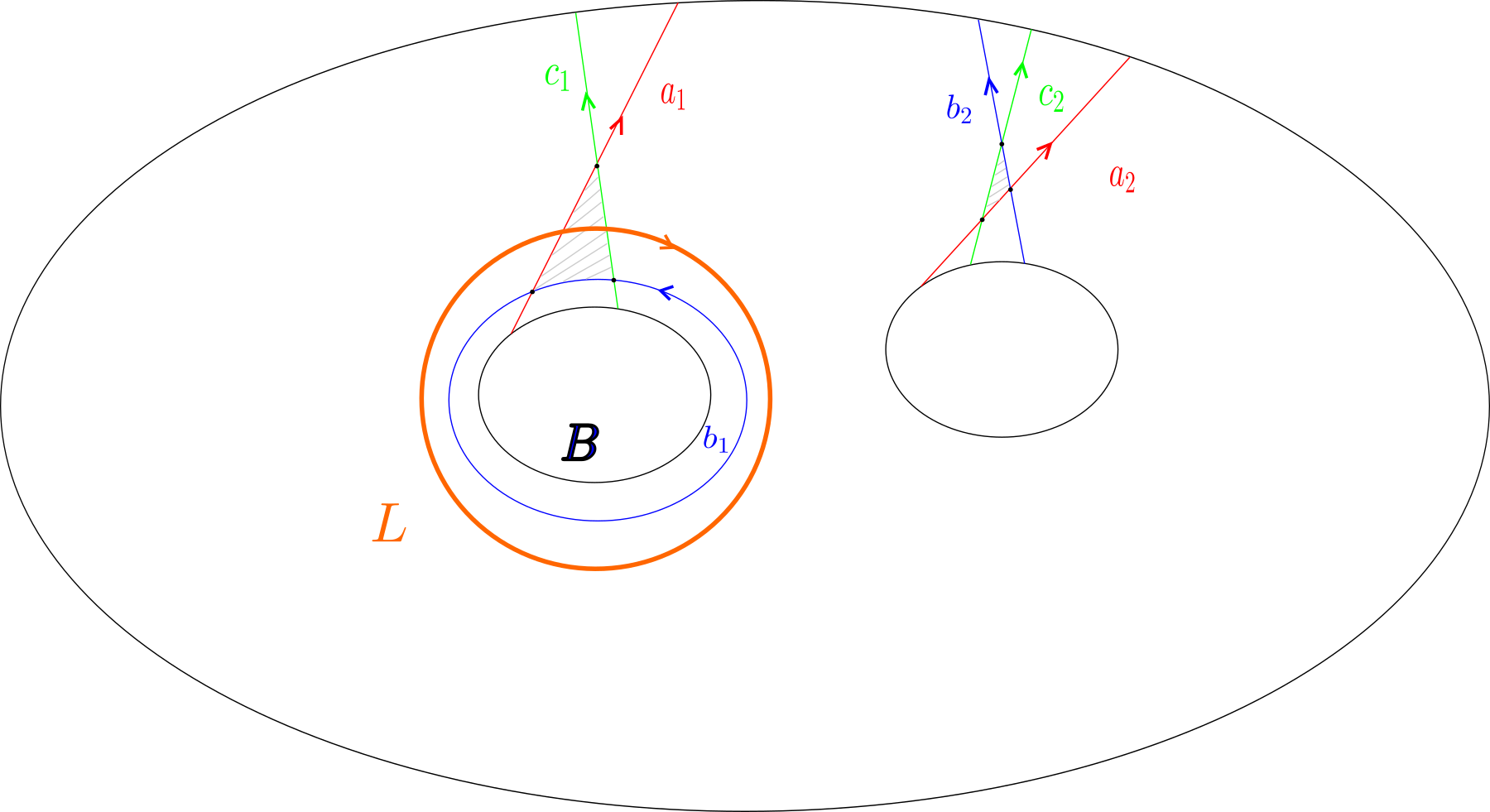}
    \caption{We oriented the curves as it shown, and the $Spin^c$ structure \s  
     \ corresponds to the shaded small triangle.
    }
    \label{Oriented triple}
\end{figure}

Notice that $L\simeq -\beta_1$, thus we can take our $\overline{\mathfrak{p}}$ to be a triply periodic domain on $(\Sigma,\alpha,\gamma,\beta,z)$ with boundary $-p\beta_1+\sum_{i=1}^k n_i \cdot (\alpha_i-\gamma_i)$. Moreover we can add or subtract a multiple of the whole $\Sigma$ to make $n_z(\mathfrak{\overline{p}})=0$. Thus by formula \ref{c_1 of triangle}

\begin{align*}
    \langle c_1(\mathfrak{s}_z(\psi),H(\poverline) \rangle &= e(\poverline)+\#(\pa \poverline)-2n_z(\poverline)+2\sigma(\psi,\poverline) \\
    &= -p\cdot \rotq(L) + (-p+\sum n_i-n_i) - 0
    + 2\sigma(\psi,\poverline)\\
        &= -p\cdot \rotq(L) -p + 2\sigma(\psi,\poverline)
\end{align*}    

We claim that $2\sigma(\psi,\poverline)=0$. We draw the ``dual spider" $\alpha'$, $\gamma'$, $\beta'$ and $r$, $s$, $t$ based on the orientation, recall $\Delta=\Delta_1+\Delta_2+...+\Delta_k$, and except $\Delta_1$ the neighborhood of the rest triangles look the same, so the contribute for the dual spider can be divided into two case for $\Delta_1$ and $\Delta_i$ where $i\neq 1$. 

We first look at the contribution of dual spider number $\sigma(\psi,\poverline|)_{\Delta_1}$from $\Delta_1$, see Figure \ref{fig: dual spider}. Since we assume $n_z(\poverline)=0$, and the multiplicity of $\alpha_1$ and $\gamma_1$ are $n_1$ and $-n_1$ respectively, this force $n_{\psi(u)}(\poverline)|_{\Delta_1}=-n_1$. It is easy to see $\pa_{\alpha'_1}(\poverline) \cdot r =0 $ and $\pa_{\beta'_1}(\poverline) \cdot s=0$ because both $\alpha'_1$ and $\beta'_1$ are outside of the triangle $\Delta_1$. The last quantity $\pa_{\gamma'_1}(\poverline) \cdot t = n_1$ because $\gamma'_1 \cdot t =-1$ and $\gamma_1$ has multiplicity $-n_1$. Thus 
\begin{align*}
    \sigma(\psi,\poverline)|_{\Delta_1}&=n_{\psi(u)}(\poverline)|_{\Delta_1}+\pa_{\alpha_1'}(\poverline) \cdot r+ \pa_{\gamma_1'}(\poverline)\cdot t + \pa_{\beta_1'}(\poverline)\cdot s \\
    &=-n_1+0+n_1+0.\\
    &=0
\end{align*} 

We then look at the contribution of dual spider number $\sigma(\psi,\poverline|)_{\Delta_i}$ from $\Delta_i$ for $i\neq 1$, see Figure \ref{fig: dual spider}. Again the assumption $n_z(\poverline)=0$ with the fact that the multiplicity of $\alpha_i$ and $\gamma_i$ are $n_i$ and $-n_i$ respectively force $n_{\psi(u)}(\poverline)|_{\Delta_i}=n_i$. This time we see $\pa_{\gamma'_i}(\poverline) \cdot t =0 $ because $\gamma'_i$ are outside of the triangle $\Delta_i$, and $\pa_{\beta'_1}(\poverline) \cdot s=0$ because $\beta_i$ has multiplicity 0. The last quantity $\pa_{\alpha'_1}(\poverline) \cdot r = -n_1$ because $\alpha'_i \cdot t =-1$ and $\alpha_i$ has multiplicity $n_1$. Thus 
\begin{align*}
    \sigma(\psi,\poverline)|_{\Delta_i}&=n_{\psi(u)}(\poverline)|_{\Delta_i}+\pa_{\alpha_i'}(\poverline) \cdot r+ \pa_{\gamma_i'}(\poverline)\cdot t + \pa_{\beta_i'}(\poverline)\cdot s \\
    &=n_i-n_i+0+0\\
    &=0
\end{align*}

Thus each triangles contribute $0$, we have $\sigma(\psi,\poverline)=0$, which proves the claim. Hence when we plug in back to the Chern class evaluation we get 
$$\langle c_1(\mathfrak{s}_z(\psi),H(\poverline) \rangle = -p\cdot \rotq(L) -p $$

To complete the proof we claim that  $H(\poverline)$ is the same as the class of the capped off rational Seifert surface. This is clear by the fact that the $\beta$-boundary of $\poverline$ is just $pL$, together with the construction of the identification between periodic domains in $(\Sigma,\alpha,\gamma,\beta)$ and homology classes in the cobordism between $M$ and $M'$ \citep[Proposition 8.2]{OSht}.

\end{proof} 

\begin{figure}[htb!]
\centering
\begin{tikzpicture}
    \node at (-4,0){\includegraphics[scale=0.06]{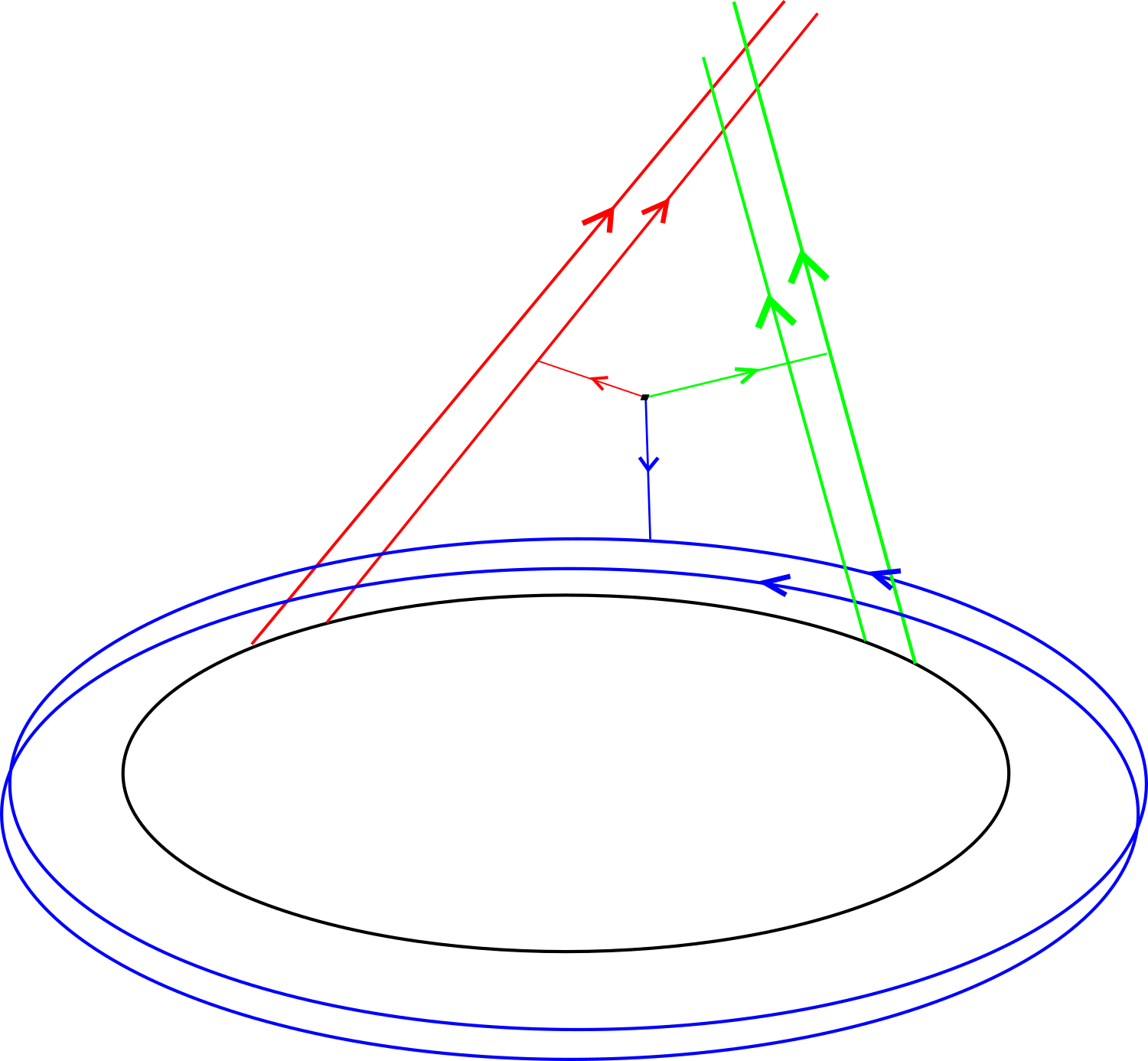}};
     \node at (-3.7,1){u};
      \node [text=green] at (-3.1,0.7){t};
      \node [text=red] at (-4,0.7){r};
       \node [text=blue] at (-3.7,0.3){s};
         \node [text=green] at (-3.15,1.7){$\gamma_1'$};
         \node [text=green] at (-2.4,1.9){$\gamma_1$};
       \node [text=red] at (-3.5,1.5){$\alpha_1$};
        \node [text=red] at (-4.3, 1.8){$\alpha_1'$};
         \node [text=blue] at (-1.2,-1.2){$\beta_1'$};
         \node [text=blue] at (-1.2,-0.2){$\beta_1$};

         \node at (3.8,0){\includegraphics[scale=0.07]{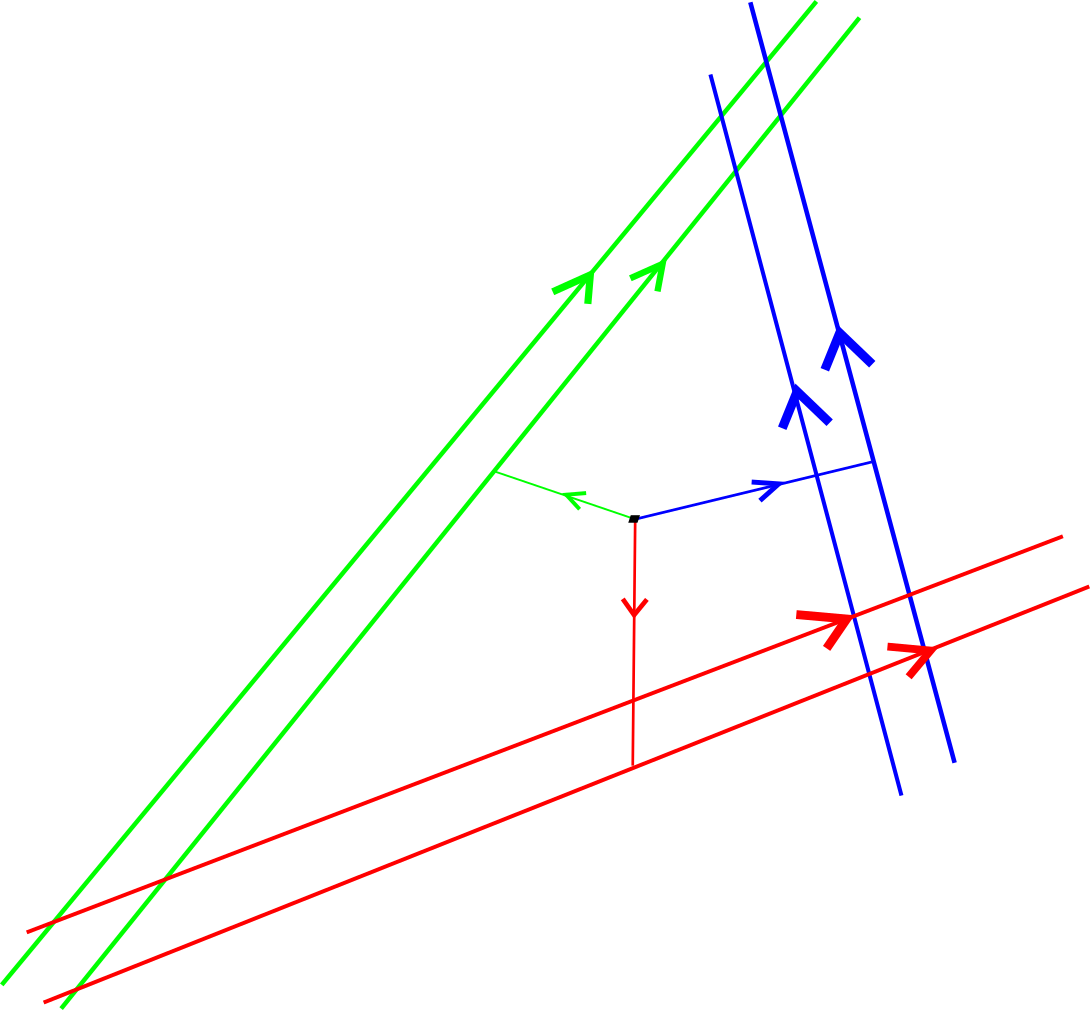}};
     \node at (4.21,0.1){u};
      \node [text=green] at (3.8,-0.2){t};
      \node [text=red] at (4,-0.5){r};
       \node [text=blue] at (4.5,-0.2){s};
         \node [text=green] at (4.1,0.5){$\gamma_i$};
         \node [text=green] at (4,2){$\gamma_i'$};
       \node [text=red] at (3.5,-1){$\alpha_i'$};
        \node [text=red] at (3.8,-1.9){$\alpha_i$};
         \node [text=blue] at (4.7,0.8){$\beta_i'$};
         \node [text=blue] at (5.5,1.2){$\beta_i$};
\end{tikzpicture}
    \caption{The picture on the left describes the dual spider near $\Delta_1$, and the picture on the right describes the dual spider near $\Delta_i$ for $i\neq 1$.}
    \label{fig: dual spider}
\end{figure}


Before going in to the proof of Theorem \ref{thm: spin^c for RHS}, we first recall some the general settings about the $+n$ contact surgery, capping off and rationally null-homologous knot.

We will use the same notation as is described in section \ref{Contact surgery and Capping off cobordism}. Namely, $(P',k\circ \phi)$ is a stabilized open book compatible with $(Y,\xi)$ with $L$ on its page, and the stabilization $L^-$ parallel to some binding $B$; $(P',\phi')$ is an open book compatible with $(Y_n(L),\xi_n^-(L))$ (where $\phi'$ is obtained by composing $k\circ\phi$ with some twists along $L$ and $L^-$); and $B_L$ is the binding in $(P',\phi')$ corresponding to $B$ in $(P',k\circ \phi)$. Thus the cobordism corresponding to $+n$ contact surgery on $L$ in $Y$ is the same 4-manifold (with opposite orientation) as the  cobordism corresponding to capping off $B_L$ in $Y_n(L)$. Observe that the capped-off rational Seifert surfaces $\Tilde{F}$ (for $L$) and $\Tilde{V}$ (for $B_L$) are both generators of the second rational homology of this cobordism, but need not be identical classes.

We also recall that $[\pa F]=c(t\lambda_{can} + r{\mu})$, where $\lambda_{can}$ is the canonical framing and $ct=y$ is the order of $L$. Moreover if $\tbq(L)=\frac{x}{y}$, then contact $+n$ surgery on $L$ is the same as smooth $k(L)+n$ surgery on $L$ which is equivalent to smooth $\frac{x+cr+ny}{y}$ (according to \ref{k(L) calculation}) surgery on $L$.

Under the above setting we are able to state the most essential Lemma we need to prove the Theorem \ref{thm: spin^c for RHS}. 

\begin{lemma} As classes in $H_2(W;\mathbb{Z})/Tors$, we have \label{lem: homology class depend on smooth coefficient}
    \begin{equation*} 
    [\Tilde{F}]= \begin{cases} 
      -[\Tilde{V}] & \text{ if } x+ny>0\\
      [\Tilde{V}] & \text{ if } x+ny<0 
      \end{cases}.
\end{equation*}
\end{lemma}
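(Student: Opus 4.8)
The plan is to show that $[\Tilde{F}]$ and $[\Tilde{V}]$ are each a generator of $H_2(W;\mathbb{Z})/\mathrm{Tors}$, so that they agree up to a sign, and then to determine that sign by an intersection‐number computation. First I would record that $H_2(W;\mathbb{Z})\cong\mathbb{Z}$ and is torsion free: since $W$ is obtained from a rational homology sphere by attaching a single $2$‑handle, a Mayer--Vietoris argument for $W=(Y\times I)\cup h$ identifies $H_2(W;\mathbb{Z})$ with $\ker\!\big(H_1(S^1\times D^2)\to H_1(Y)\big)$, the subgroup of $H_1(S^1\times D^2)\cong\mathbb{Z}\langle L\rangle$ generated by $y[L]$; and $[\Tilde{F}]$ is exactly the class whose image under the connecting map is $\pm y[L]$, hence a generator. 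Running the same reasoning for the opposite handle decomposition $-W=X$ provided by Theorem \ref{3.4} (namely $X=(Y_n(L)\times I)\cup h'$ with $h'$ attached along $B_L$, of order $p$) shows that $[\Tilde{V}]$ maps to $\pm p[B_L]$ and is likewise a generator. Therefore $[\Tilde{F}]=\eta\,[\Tilde{V}]$ in $H_2(W;\mathbb{Z})/\mathrm{Tors}$ for a well‑defined $\eta\in\{+1,-1\}$, and the whole content of the lemma is the value of $\eta$.

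To pin down $\eta$ I would compute two intersection numbers in $W$. Writing $\Tilde{F}=(i(F)\times\{1\})\cup(-yC)$ and $\Tilde{V}=(i(V)\times\{1\})\cup(-pC')$ with $C,C'$ the cores of the two $2$‑handles, handle duality identifies $C'$ with the cocore $\bar C$ of $C$, a disk with $\pa\bar C=B_L$. Since $\bar C$ lies inside the handle and is disjoint from $Y\times I$, while $C$ lies inside the handle and is disjoint from a collar of $Y_n(L)$ — and one checks that the various ``collar'' pieces used to close $\Tilde{F}$ and $\Tilde{V}$ up into cycles are likewise disjoint from the relevant disks — all mixed contributions to $[\Tilde{F}]\cdot[\Tilde{V}]$ cancel, leaving $[\Tilde{F}]\cdot[\Tilde{V}]=yp\,(C\cdot\bar C)=\epsilon_0\,yp$, where $\epsilon_0:=C\cdot\bar C\in\{\pm1\}$ is the intersection sign of a core with its cocore.

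Next I would use the self‑intersection of the capped rational Seifert surface. Contact $+n$ surgery on $L$ is smooth $(k(L)+n)$‑surgery with respect to $\lambda_{can}$, and the rational version of the standard framing computation for a $2$‑handle cobordism gives $[\Tilde{F}]^2=y(x+ny)$ (equivalently $y^2(\tbq(L)+n)$); the rational self‑linking correction in $\tbq$ is precisely what cancels the $cr$ appearing in the numerator $x+cr+ny$ of the smooth surgery coefficient, which is why $x+ny$ and not $x+cr+ny$ is the relevant quantity. Since $[\Tilde{F}]=\eta[\Tilde{V}]$ forces $[\Tilde{V}]^2=[\Tilde{F}]^2$ and $[\Tilde{F}]\cdot[\Tilde{V}]=\eta[\Tilde{F}]^2$, comparing with the previous paragraph yields $\epsilon_0\,yp=\eta\,y(x+ny)$; taking absolute values recovers $p=|x+ny|$ (a consistency check, also visible from $H_1(Y_n(L))$), and taking signs gives $\eta=\epsilon_0\cdot\mathrm{sign}(x+ny)$.

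It then remains only to evaluate $\epsilon_0$, and this is where the genuine work lies. The sign $\epsilon_0$ is universal, but extracting it correctly requires carefully assembling every orientation convention: the orientation of $W$ as the cobordism $W_{L,n}\colon Y\to Y_n(L)$, the product orientation of $h=D^2\times D^2$ relative to that of $W$, the attaching identification $\pa D^2\times D^2\cong N(L)$ (which orients $C$ by $\pa C=L=S$), and the orientation of the belt circle $B_L$ that orients $\bar C=C'$ as it enters $\Tilde{V}$. A safe way to nail it is to run the entire argument on the model case of the $\tbq=-1$ Legendrian unknot in $(S^3,\xi_{std})$ with contact $+n$ surgery for $n\ge 2$: there $W$ is the disk bundle over $S^2$ of Euler number $n-1>0$, $\Tilde{F}$ is the zero section, $[\Tilde{F}]^2=n-1=x+ny>0$, and the conclusion is classical, which forces $\epsilon_0=-1$; as $\epsilon_0$ depends on neither the knot nor $n$, this is general. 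Plugging $\epsilon_0=-1$ into the formula above gives $\eta=-\mathrm{sign}(x+ny)$, i.e. $[\Tilde{F}]=-[\Tilde{V}]$ when $x+ny>0$ and $[\Tilde{F}]=[\Tilde{V}]$ when $x+ny<0$, as claimed. The main obstacle, then, is not the homological bookkeeping of the classes (which is formal) but this orientation analysis, together with stating the rational self‑intersection formula $[\Tilde{F}]^2=y(x+ny)$ precisely enough that the cancellation of the $cr$ term is manifest.
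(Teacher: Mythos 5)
Your overall strategy is the same as the paper's: both compute $[\Tilde{F}]\cdot[\Tilde{V}]$ and $[\Tilde{F}]^2$, observe that $[\Tilde{F}]$ and $[\Tilde{V}]$ are proportional generators of $H_2(W;\mathbb{Q})$, and extract the sign by comparing. Your computation $[\Tilde{F}]^2 = y(x+ny)$ matches the paper's Claim 2 (which in turn cites \citep[Lemma 5.1]{MTn}), and the reduction $\eta = \epsilon_0\cdot\mathrm{sign}(x+ny)$ is correct.

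The gap is in determining $\epsilon_0 = C\cdot\bar{C}$, which is exactly the paper's Claim 1, and which is where the real work lives. You correctly identify that $\bar{C}$ is oriented by $\partial\bar{C}=B_L$, where the orientation of $B_L$ comes from the open book/binding structure. But this means $\epsilon_0$ is \emph{not} the tautological core--cocore pairing (which is $+1$ by convention in the paper's setup); it is a comparison between that convention and the orientation of $B_L$ induced by the contact geometry, and that comparison is not a priori visible. Your appeal to the model case --- the $tb=-1$ Legendrian unknot --- asserts that ``the conclusion is classical'' without actually identifying how $B_L$ is oriented in $L(n-1,1)$ or computing $[\Tilde{V}]$ explicitly. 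Since $\epsilon_0$ is precisely the quantity the lemma hinges on, saying the model case forces $\epsilon_0=-1$ is circular unless you actually carry out the orientation bookkeeping there, which you do not. You also assert without argument that $\epsilon_0$ is universal (independent of $L$ and $n$); this is plausible but is itself a claim about how the orientation of $B_L$ tracks the handle, and needs justification before the model case is allowed to settle the general one.

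By contrast, the paper's proof of Claim 1 computes $C\cdot C_c(L')$ directly: using the DGS presentation of $+n$ contact surgery, it identifies the pairing with a rational linking number $lk_{\mathbb{Q}}(L_1, s(L_0))$, and evaluates it via an explicit sequence of handle slides, finding $lk_{\mathbb{Q}}(L_1,L_0) - lk_{\mathbb{Q}}(L_1,L_1) = \tbq(L_1) - (\tbq(L_1)+1) = -1$. This handle-slide argument, which keeps track of the Legendrian push-off $L_0$ and how it becomes the meridian corresponding to the cocore, is what pins down the sign --- and is the step missing from your proposal. Your framing of the problem (reduce to computing one universal intersection sign) is reasonable, but you would still need to supply either this handle-slide computation or an honest, written-out orientation analysis of the model case.
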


\begin{proof}
    First notice the classes $[\Tilde{F}]$ and $[\Tilde{V}]$ are represented by  2-chains obtained by adding $y$ (the order of $L$ in homology) copies of the core disk $C$ of the surgery handle to $F$ in the first case, or $p$ (the order of $B_L$ in homology) copies of the cocore  disk $C_c$ to $V$ in the second case. Here the disk $C$ is oriented so that $\partial C = -L$, and we take the cocore disk to be oriented so that its signed intersection with the core disk is $+1$. It follows that $[\Tilde{F}]\cdot [\Tilde{V}] = \pm yp$, where the sign depends on the relative orientation between $\partial V$ (or equivalently $L'$) and the boundary of the cocore. We make the following $2$ claims. 
    
    Claim $1$: $[\Tilde{F}]\cdot [\Tilde{V}] = - yp$
    
    Claim $2$: $[\Tilde{F}]\cdot [\Tilde{F}] = y(x+ny)$\\
Once we have achieved the above two claims then by  \cite[commment after Lemma 5.1]{MTn} or \citep[Lemma 5.2]{DingLiWuContact+1onrationalsphere} the order of $L'$ is $p=|c(k(L)+n)t-cr|=|c(\frac{x+cr+ny}{y})t-cr|=|x+ny|$. Thus Claim 1 implies $[\Tilde{F}]\cdot [\Tilde{V}] = -y|x+ny|$. But $[\Tilde{F}]$ and $[\Tilde{V}]$ rational generators of $H_2(W)$, hence (modulo torsion) one is a multiple of the other; on the other hand we now have that $[\Tilde{F}]\cdot [\Tilde{F}] = \pm [\Tilde{F}]\cdot [\Tilde{V}]$ where the sign depends on the positivity of $x+ny$. The lemma follows.

Now we will first prove Claim $1$. We denote $C_c(L')$ to be the oriented cocore of the handle correspond to the orientation of $L'$, then the Claim $1$ is equivalent to $C \cdot C_c(L') =-1$. 

 To figure out the intersection number we recall that the DGS alogrithm (Theorem \ref{3.2}, c.f. \cite{DGS}) says that $+n$ surgery on $L$ is the same as  surgery along a link $L_1 = L, L_2, \ldots, L_n$, so that $L_2$ is the negativly stabilized Legendrian push off of $L_1$, and  $L_i$ is the Legendrian push off of $L_{i-1}$ for $i = 3,\ldots, n$. Moreover, if we denote $L_0$ be a further push off of $L_n$ then $L'$ corresponds to $L_0$ after performing contact $+1$ surgery on $L_1$ and contact $-1$ surgery on $L_2,\ldots, L_n$. (Remark here the notation is a bit different from the notation in Theorem \ref{3.2}.) 

We will first slide every $L_i$ for $i = 2,\ldots, n$ with smooth framing $k(L)-2$ over $L_1$ in order and further slide $L_0$ over $L_1$. If we denote $s(L_i)$ to be the new knots in the surgery diagram corresponding to $L_i$ after the slide, then it's not hard to see that for $i = 2,\ldots, n$ $s(L_i)$ are all isotopic to the meridian of $L$ with smooth framing $-1$, and $s(L_0)$ is also isotopic to the meridian of $L$. Thus after we blow down all the $s(L_i)$ for $i = 2,\ldots, n$, we are back to smooth $k(L)+n$ surgery on $L_1=L$, and $s(L_0)$ is still the meridian of $L_1$ that corresponds to the cocore of the handle. 

Now $[C] \cdot [C_c(L')]$ is the same as  $lk_{\mathbb{Q}}(L_1,s(L_0))$. Equipping $L_1$ and $L_0$ with the same orientation at the beginning, the slide of $L_0$ over $L_1$ becomes a handle subtraction, so that $lk_{\mathbb{Q}}(L_1,s(L_0))=lk_{\mathbb{Q}}(L_1,L_0)-lk_{\mathbb{Q}}(L_1,L_1)=\tbq(L_1)-(\tbq(L_1)+1)=-1=[C] \cdot [C_c(L')]$, which finishes the proof of Claim $1$.

Claim $2$ is more straight forward following \citep[Lemma 5.1]{MTn} that tells us the self-intersection $[\Tilde{F}]\cdot [\Tilde{F}]=y((k(L)+n)y-cr)=y(\frac{x+cr+ny}{y}y-cr)=y(x+ny)$, which completes the proof.
\end{proof}

\begin{proof}[Proof of Theorem \ref{thm: spin^c for RHS}]

First from Theorem \ref{3.4} and \ref{capping off spinc} we have that the  $Spin^c$ structure we are interested in satisfies 
\[
\langle c_1(\mathfrak{s}), [\Tilde{V}]\rangle = - p(rot_{\mathbb{Q}}(L')+1).
\]
Combined with Lemma \ref{lem: homology class depend on smooth coefficient} this means
 
\begin{equation}
\label{sign of c1 depend on smooth coefficient}
   \langle c_1(\mathfrak{s}),[\Tilde{F}] \rangle =\begin{cases} 
      p(rot_\mathbb{Q}(L')+1) & \text{ if } x+ny>0\\
      -p(rot_\mathbb{Q}(L')+1) & \text{ if } x+ny<0 
      \end{cases}.
\end{equation}

The last step is to represent $\rotq(L')$ using $\rotq(L)$. To simplify the notation in calculation we let $\rotq(L)=r$ and $\tbq(L)=a=\frac{x}{y}$ where $y$ is the order of $L$; then the order of $L'$ is $p=|x+ny|$ as we discussed above. Again we express contact $+n$ surgery on $L$ as surgery along a link $L_1 = L, L_2, \ldots, L_n$ following the DGS algorithm (Theorem \ref{3.2}), and $L_0$ is a further push off of $L_n$ that correspond to $L'$ before performing contact surgery. 
Then \cite[Lemma 4.1]{DingLiWuContact+1onrationalsphere} indicates that the rational rotation number of $L'$ is given by 

\begin{equation} \label{eqn: rotation calculation}
    \rotq(L')= \rotq(L_0)- \biggl \langle \begin{pmatrix} \rotq(L_1)\\ \vdots\\ \rotq(L_n) \end{pmatrix}, M^{-1} \begin{pmatrix} lk_\mathbb{Q}(L_0,L_1)\\ \vdots\\ lk_\mathbb{Q}(L_0,L_n) \end{pmatrix} \biggl \rangle.
\end{equation}

It's easy to find those rotation numbers. In $(Y,\xi)$ we have $L_1=L$ and $L_i=L^-$ for $i=0,2,...,n$, so $\rotq(L_1)=r$ and $\rotq(L_i)=r-1$ for $i=0,2,...,n$. In \eqref{eqn: rotation calculation}, 
$M$ is the $n\times n$ rational linking matrix \begin{equation*}
M = \begin{bmatrix} 
a+1 &a    &\dots   &\dots  &\dots &a \\ 
a   &a-2  &a-1 &\dots  &\dots &a-1 \\
\vdots &a-1 & a-2 &a-1 &\dots &\vdots \\ 
\vdots &\vdots & a-1 &a-2 &\dots &\vdots \\

\vdots &\vdots & \vdots &\vdots &\ddots &a-1 \\

a  &a-1  & \dots  &\dots &a-1      & a-2         \\
\end{bmatrix}
\end{equation*} whose inverse is 

\begin{equation*}
M^{-1} = \frac{1}{a+n} \begin{bmatrix} 
n-(n-1)a &a    &\dots   &\dots  &\dots &a \\ 
a   &1-a-n  &1 &\dots  &\dots &1\\
\vdots &1 & 1-a-n &1 &\dots &\vdots \\ 
\vdots &\vdots & 1 &1-a-n &\dots &\vdots \\

\vdots &\vdots & \vdots &\dots &\ddots &1 \\

a  &1  & \dots  &\dots &1      & 1-a-n         \\
\end{bmatrix}
\end{equation*}

Moreover since $lk_\mathbb{Q}(L_0,L_1)=a$, and $lk_\mathbb{Q}(L_0,L_i)=a-1$ for all $i=2,...n$ we want to calculate 
$$
    \begin{bmatrix} 
n-(n-1)a &a    &\dots   &\dots  &\dots &a \\ 
a   &1-a-n  &1 &\dots  &\dots &1\\
\vdots &1 & 1-a-n &1 &\dots &\vdots \\ 
\vdots &\vdots & 1 &1-a-n &\dots &\vdots \\

\vdots &\vdots & \vdots &\dots &\ddots &1 \\

a  &1  & \dots  &\dots &1      & 1-a-n         \\
\end{bmatrix} \begin{pmatrix} a\\a-1\\ \vdots\\ \vdots \\ \vdots \\ a-1 \end{pmatrix}
=\begin{pmatrix} a\\1\\ \vdots\\ \vdots \\ \vdots \\ 1 \end{pmatrix}$$

Thus equation \ref{eqn: rotation calculation} becomes 
\begin{align*}
    \rotq(L')&= (r-1) - \biggl \langle \begin{pmatrix} r\\ r-1\\\vdots\\ r-1 \end{pmatrix}, \frac{1}{a+n} \begin{pmatrix} a\\1\\ \vdots\\ 1\end{pmatrix} \biggl \rangle \\
    &=(r-1)-\frac{ra+(n-1)(r-1)}{a+n} \\
    &=\frac{r-a-1}{a+n} \text{ (substitute }a=\frac{x}{y}) \\
    &= \frac{yr-x-y}{x+ny}\\
    &= \frac{yr+ny-y-(x+ny)}{x+ny}\\
    &=\frac{y(r+n-1)}{x+ny}-1
\end{align*}

Now when we plug in back to equation \ref{sign of c1 depend on smooth coefficient} and  $p=|x+ny|$ we obtain 

\begin{align*}
    \langle c_1(\mathfrak{s}),[\Tilde{V}] \rangle & = \begin{cases} 
      p(rot_\mathbb{Q}(L')+1) & \text{ if } x+ny>0\\
      -p(rot_\mathbb{Q}(L')+1) & \text{ if } x+ny<0
      \end{cases}\\
    &=\begin{cases} 
      |x+ny|(\frac{y(r+n-1)}{x+ny}-1+1) & \text{ if } x+ny>0\\
      -|x+ny|(\frac{y(r+n-1)}{x+ny}-1+1) & \text{ if } x+ny<0
      \end{cases}\\
    &=  \begin{cases} 
      y(r+n-1) & \text{ if } x+ny>0\\
      y(r+n-1) & \text{ if } x+ny<0 \end{cases}\\
     &= y(r+n-1) \text{ \ \ \ if } x+ny \neq 0  
   .
\end{align*}

Last the condition both $Y$ and $Y_n(L)$ are rational homology sphere implies the order of $L'$, namely $p=|x+ny|$, is nonzero, which conclude the proof.
\end{proof}

\clearpage \bibliographystyle{plain}
\bibliography{bib}

\end{document}